\DeclareMathOperator{\Span}{span}
\DeclareMathOperator{\orb}{orb}
\DeclareMathOperator{\diam}{diam}
\newtheorem{theorem}{Theorem}[section]
\newtheorem{lemma}[theorem]{Lemma}
\newtheorem{corollary}[theorem]{Corollary}
\newtheorem{problem}[theorem]{Problem}
{
\theoremstyle{definition}
\newtheorem{example}[theorem]{Example}
}
\newcommand{\vphi}{\varphi}
\def\RR{\mathbb R}
\def\NN{\mathbb N}
\def\CC{\mathbb C}
\def\N{\mathcal N}
\def\D{\mathcal D}
\def\T{\mathcal T}
\def\R{\mathcal R}
\def\I{\mathcal I}
\def\C{\mathcal C}
\def\H{\mathcal H}
\journal{arXiv}
\begin{document}

\begin{frontmatter}

\title{Self-similar fractals and common hypercyclicity}
\author[aff,grant]{Fernando Costa Jr.}
\ead{fernando@mat.ufpb.br}
\ead[url]{fernando.mat.br}
\fntext[grant]{The author is a Postdoctoral Researcher of the Fonds de la Recherche Scientifique (FNRS) and was also supported by CNPq grant 406457/2023-9.}

\affiliation[aff]{organization={Departamento de Matemática, Universidade Federal da Paraíba - Campus I},
            addressline={Jardim Universitário, s/n, Bairro Castelo Branco},
            city={CEP 58051-900, João Pessoa},
            country={Brazil}}

\begin{abstract}
We obtain a multi-dimensional generalization of the Costakis-Sambarino criterion for common hypercyclic vectors with optimal consequences on a large class of fractals. Applications include families of products of backward shifts parameterized by any Hölder continuous curve in $\mathbb R^d$, for all $d\geq 1$.
\end{abstract}

\begin{keyword}
common hypercyclicity \sep Hausdorff dimension \sep self-similar fractal \sep Hölder curve 

\MSC[2010] 47A16(47B37) 
\sep 28A80 
\sep 28A78

\end{keyword}

\end{frontmatter}

\section{Introduction}
\label{sec:intro}

\subsection{Linear dynamics and the common hypercyclicity problem}

Linear dynamics is the study of orbits of linear operators on topological vector spaces. The main notion in this theory is that of hypercyclicity. We say that a continuous endomorphism $T:X\to X$ of a topological vector space $X$ is \emph{hypercyclic} whenever there exists a vector $u$ in $X$ whose orbit $\orb(u;T):=\{T^nu : n\geq 0\}$ under the action of $T$ is dense in $X$. Such a vector is called a \emph{hypercyclic vector} for the operator $T$. It is known that, when $X$ is a complete separable metric space, the set of hypercyclic vectors for an operator $T:X\to X$, which we will denote by $HC(T)$, is either empty or a residual $G_\delta$ subset of $X$ \cite[Theorem 9.20]{GottHedl}. In this case, any countable family $(T_k)_{k}$ of hypercyclic operators acting on $X$ share a vector $u$ which is simultaneously hypercyclic for every one of its members. Indeed, the countable intersection of residual sets $\bigcap_k HC(T_k)$ is non-empty (and also residual). The problem becomes tricky when the family in question is not countable, for a Baire argument cannot be trivially applied. This question, which we will call the \emph{common hypercyclicity} problem, first appeared in the work of G. Godefroy and J. H. Shapiro \cite[Remark 5.5(a)]{GodeShap}. When $\Lambda$ is a $\sigma$-compact subset of some complete metric space, we say that $(T_\lambda)_{\lambda\in \Lambda}$ is a \emph{continuous family} when the map $(\lambda, u)\mapsto T_\lambda u$, from $\Lambda\times X$ into $X$, is continuous in the product topology. A continuous family $(T_\lambda)_{\lambda\in \Lambda}$ of operators is said to be a \emph{common hypercyclic family} when $\bigcap_{\lambda\in\Lambda}HC(T_\lambda)\neq\varnothing$. Any vector $v\in\bigcap_{\lambda\in\Lambda}HC(T_\lambda)$ is called a \emph{common hypercyclic vector} for the family $(T_\lambda)_{\lambda\in \Lambda}$.

The concept of hypercyclicity can be seen as a particular case of the older and more general notion of \emph{universality}. Let $X, Y$ be topological spaces and let $\{T_\xi : \xi \in \N\}$ be a family of mappings from $X$ to $Y$. We say that $u\in X$ is a \emph{universal point} for the family $\{T_\xi : \xi \in \N\}$ whenever its orbit $\orb(x; T_\xi):=\{T_\xi u : \xi \in \N\}$ is dense in $Y$. In this case, we say that $\{T_\xi : \xi \in \N\}$ is a \emph{universal family}. We refer to \cite{Karlsurvey} for a more detailed discussion. In this article, we only consider the particular case where $X$ is a topological vector space, $Y=X$, $\N=\NN$ and $T_n$ is a bounded linear map for all $n\in\NN$. We also add the convention that $T_0=Id$ is the identity map in $X$ for all families $(T_n)_{n\in\NN}$ of maps on $X$. In our context, \emph{points} are called \emph{vectors}, \emph{continuous linear mappings} are called \emph{operators} and hypercyclicity corresponds to universality of the family $T_n=T^n$ of iterates of a single operator. Just like for hypercyclicity, when $\Lambda$ is a $\sigma$-compact subset of some complete metric space, we say that $\{T_{n,\lambda} : n\in\NN, \lambda\in \Lambda\}$ is a \emph{continuous family} when, for each $n\in\NN$, the map $(\lambda, u)\mapsto T_{n,\lambda} u$, from $\Lambda \times X$ into $X$, is continuous  in the product topology.

\subsection{Historical highlights and main results}

The first positive result on the common hypercyclicity problem was obtained in 2003 by E. Abakumov and J. Gordon \cite{AbakGordon} (and independently by A. Peris \cite{PerisCommon}). They answered positively a question raised by H. N. Salas \cite[Section 6(5)]{Salas} on the common hypercyclicity of the classical family $(e^\lambda B)_{\lambda>0}$ of hypercyclic Rolewicz operators (see \cite{Rolewicz}) acting on $\ell_2$, where $B$ is the canonical backward shift. Their proof consists in the explicit construction of the common hypercyclic vector. Further exploring their ideas, in 2004 F. Bayart \cite{bayart2004} provided a common hypercyclic criterion for families of multiples of a single operator, with applications to adjoints of multipliers. Also in \cite{AbakGordon}, the authors mention a very interesting example (attributed to A. Borichev), which can be stated as follows.
\begin{example}[Borichev's example {\cite[Remark 6.3]{BMHowTo}}] \label{Borichev}
    Let $\Lambda=(0,+\infty)\times(0,+\infty)$. For each $\lambda=(x,y)\in \Lambda$, define $T_\lambda=e^xB\times e^y B$ acting on $\ell_2\times \ell_2$. If $K\subset \Lambda$ satisfies $\bigcap_{\lambda\in K}HC(T_\lambda)\neq \varnothing$, then $K$ has Lebesgue measure $0$.
\end{example}
A more general version of this example, along with a more general study of the relation between the Hausdorff dimension of $\Lambda$ and the possibility of $(T_\lambda)_{\lambda\in \Lambda}$ to have a common hypercyclic vector, was further explored in \cite{BCQCommon}. There, Borichev's example was generalized to $d$-dimensional sets of parameters indexing products of weighted backward shifts of a specific (yet general) kind.
\begin{theorem}[{\cite[Corollary 1.3]{BCQCommon}}] \label{dim-small}
Let $X=c_0$ or $\ell_p$, $p\in[1,+\infty)$, $d\in\NN$ and $\alpha\in(0,1]$. Given the family of weights $w(x) = \big(w_1(x), w_2(x),\dots\big)$, $x>0$, that satisfy $w_1(x)\cdots w_n(x)=\exp(n^\alpha)$ for all $n\in\NN$ and $x>0$, if $\Lambda\subset (0,+\infty)^d$ is such that the family $\big(B_{w(\lambda_1)}\times\cdots B_{w(\lambda_d)}\big)_{(\lambda_1,\dots,\lambda_d)\in \Lambda}$ has a common hypercyclic vector, then $\dim_\H(\Lambda) \leq 1/\alpha$.
\end{theorem}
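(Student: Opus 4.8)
The plan is to prove the estimate directly from the existence of a common hypercyclic vector, via a covering argument: assuming $u=(u^{(1)},\dots,u^{(d)})\in\bigcap_{\lambda\in\Lambda}HC(T_\lambda)$ exists (where $T_\lambda=B_{w(\lambda_1)}\times\cdots\times B_{w(\lambda_d)}$), I will produce, for every $N_0\in\NN^*$, a countable cover of $\Lambda$ by boxes of small diameter indexed by integers $N\ge N_0$, and then feed this cover into the definition of $s$-dimensional Hausdorff measure. Writing each component as $u^{(j)}=\sum_{n\ge 0}u^{(j)}_ne_n$, a direct computation with the weighted shift shows that the $0$-th coordinate of $\big(B_{w(\lambda_j)}\big)^Nu^{(j)}$ equals $u^{(j)}_N\,w_N(\lambda_j)\cdots w_1(\lambda_j)=u^{(j)}_N\exp(\lambda_j N^\alpha)$, using precisely the telescoping hypothesis $w_1(x)\cdots w_n(x)=\exp(xn^\alpha)$. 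This is the only structural input the argument needs.

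First I would fix $\varepsilon\in(0,1)$ and use the target vector $(e_0,\dots,e_0)\in X^d$. Since $u$ is hypercyclic for $T_\lambda$, so is $T_\lambda^{N_0}u$ for every $N_0$ (the orbit of $T_\lambda u$ is that of $u$ minus one point, and a dense set minus finitely many points stays dense in a space without isolated points); hence the tail orbit $\{T_\lambda^Nu:N\ge N_0\}$ is dense. Thus for each $\lambda\in\Lambda$ and each $N_0$ there is some $N\ge N_0$ with $\|T_\lambda^Nu-(e_0,\dots,e_0)\|<\varepsilon$. Comparing $0$-th coordinates factor by factor and using $\big||z|-1\big|\le|z-1|$, this forces $|u^{(j)}_N|\exp(\lambda_j N^\alpha)\in(1-\varepsilon,1+\varepsilon)$ for every $j$. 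In particular $u^{(j)}_N\neq 0$, and taking logarithms confines each $\lambda_j$ to an interval of length $N^{-\alpha}\log\frac{1+\varepsilon}{1-\varepsilon}$. Hence $\lambda$ lies in a box $B_N$ all of whose sides have this length, so $\diam(B_N)\le c_\varepsilon N^{-\alpha}$ with $c_\varepsilon=\sqrt d\,\log\frac{1+\varepsilon}{1-\varepsilon}$.

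Collecting over $\lambda$, this gives $\Lambda\subset\bigcup_{N\ge N_0}B_N$ for \emph{every} $N_0$, i.e. a cover by sets of diameter at most $c_\varepsilon N_0^{-\alpha}$, which tends to $0$ as $N_0\to\infty$. Then for any $s>1/\alpha$, writing $\delta=c_\varepsilon N_0^{-\alpha}$,
\[
\H^s_{\delta}(\Lambda)\le\sum_{N\ge N_0}\big(\diam B_N\big)^s\le c_\varepsilon^{\,s}\sum_{N\ge N_0}N^{-\alpha s}.
\]
Since $\alpha s>1$ the series converges and its tail tends to $0$ as $N_0\to\infty$, whence $\H^s(\Lambda)=0$. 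As this holds for every $s>1/\alpha$, we conclude $\dim_\H(\Lambda)\le 1/\alpha$.

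The hard part, conceptually, is seeing why $1/\alpha$ is the correct threshold, and this rests on two points. First, the passage to arbitrarily large return times $N\ge N_0$: it is this tail-density step that makes the mesh of the cover shrink to $0$, and hence lets the argument detect the Hausdorff dimension rather than merely Lebesgue measure (as in Borichev's original example). Second, the width estimate $N^{-\alpha}$ for the admissible interval of each coordinate $\lambda_j$, which is exactly where the exponent $\alpha$ of the weights enters and fixes the critical value $1/\alpha$. The remaining ingredients — the elementary modulus inequality and the Hausdorff-measure bookkeeping — are routine.
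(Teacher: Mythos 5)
Your proof is correct, and it is essentially the argument behind the cited result: the paper itself gives no proof of this statement (it is quoted from \cite{BCQCommon}), and the proof there proceeds exactly by this pigeonhole--covering scheme, namely confining each parameter, via a large return time $N$ and the $0$-th coordinate of the orbit, to a box of side $N^{-\alpha}\log\frac{1+\varepsilon}{1-\varepsilon}$ depending only on $N$, and then summing $s$-th powers of diameters over $N\geq N_0$. All the individual steps you use (density of the tail orbit, the coordinate-functional bound $|x_0|\leq\|x\|$ in $c_0$ or $\ell_p$, and the Hausdorff-measure bookkeeping with $\alpha s>1$) are sound, so there is nothing to fix.
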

In particular, for $d$-folds of Rolewicz operators $\big(e^{\lambda_1}B\times\cdots \times e^{\lambda_d}B\big)_{(\lambda_1,\dots, \lambda_d)\in \Lambda}$ to be common hypercyclic, it is necessary that $\dim_\H (\Lambda)\leq 1$. We know that this condition is not sufficient, as we can construct a uni-dimensional ``fat'' Cantor dust which does not allow the existence of common hypercyclic vectors for this family (see \cite[Proposition 4.12]{BCQCommon}).

The first common hypercyclicity criterion for general families of operators was obtained in 2004 by G. Costakis and M. Sambarino \cite{CostakisSambarino}, where they proved a very general, yet ``easy-to-use'', criterion for finding common universal vectors. Indeed, the difficult part of obtaining a common hypercyclic vector being to find a good discretization of the parameter set, the Costakis-Sambarino criterion is easy to apply because it includes the construction of the required covering inside its proof (as we shall see below). Let $(X,d)$ be an $F$-space, that is, a topological vector space whose topology is induced by a complete and translation-invariant metric $d:X\times X\to \RR$. We note $\|\cdot\|=d(\cdot, 0)$ the $F$-norm induced by $d$ (see \cite[Definition 2.9]{KarlBook} or \cite{F-spaces} for more details). The Costakis-Sambarino Criterion can be stated as follows.
\begin{theorem}[CS-Criterion {\cite[Theorem 12]{CostakisSambarino}}] \label{cs-1-dim}
Let $\Lambda\subset (0,+\infty)$ be a $\sigma$-compact set and let $\{T_{n,\lambda}:n\in\NN_0, \lambda \in \Lambda\}$ be a continuous family of operators acting on an $F$-space $X$. Assume that there are $\D\subset X$ dense and maps $S_{n,\lambda}:\D\to \D, n\in\NN_0,\lambda\in \Lambda,$ such that $T_{n,\lambda}\circ S_{n,\lambda}=Id$ and, for all $u\in \D$ and $K\subset \Lambda$ compact, the following properties hold true.
    \begin{enumerate}[$(1)$]
        \item \label{CS01} There exist $\kappa\in\NN$ and a summable sequence of positive real numbers $(c_k)_k$ such that
        \begin{enumerate}[$(a)$]
            \item $\|T_{n+k,\lambda}\circ S_{n,\mu}u\|\leq c_k$ for any $n\geq0$, $k\geq \kappa$, $\mu\leq\lambda$, $\lambda,\mu\in K$;
            \item $\|T_{n,\lambda}\circ S_{n+k,\mu}u\|\leq c_k$ for any $n\geq0$, $k\geq \kappa$, $\lambda\leq\mu$, $\lambda,\mu\in K$.
        \end{enumerate}
        \item \label{CS02} Given $\eta>0$, one can find $\tau>0$ such that, for all $n\in\NN$ and all $\lambda,\mu\in K$,
        \[0\leq \mu-\lambda\leq\frac{\tau}{n}\implies \|T_{n,\lambda}\circ S_{n,\mu}u-u\|\leq \eta.\]
    \end{enumerate}
Then the set of common universal vectors for the family $\{T_{n,\lambda}:n\in\NN, \lambda \in I\}$ is a dense $G_\delta$ subset of $X$.
\end{theorem}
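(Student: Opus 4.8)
The plan is to combine a Baire category argument with an explicit construction, the decisive ingredient being a discretization of the parameter set that is produced inside the proof. First I would use $\sigma$-compactness to write $\Lambda=\bigcup_m K_m$ with each $K_m$ compact, and note that the set of common universal vectors over $\Lambda$ is the countable intersection over $m$ of the sets of common universal vectors over $K_m$; by the Baire property it then suffices to treat a single compact $K\subset\Lambda$. Fixing a countable dense sequence $(y_j)_j\subset\D$, the common universal vectors over $K$ equal $\bigcap_{j,m'}E(j,1/m')$, where $E(j,\veps)=\{u:\forall\lambda\in K\ \exists n\geq1,\ \|T_{n,\lambda}u-y_j\|<\veps\}$. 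Writing $E(j,\veps)=\bigcup_N\{u:\forall\lambda\in K\ \exists n\leq N,\ \|T_{n,\lambda}u-y_j\|<\veps\}$, each inner set is open: if $u$ works, then for each $\lambda$ some $n(\lambda)\leq N$ gives a strict inequality, which by joint continuity of $(\lambda,v)\mapsto T_{n,\lambda}v$ persists on a neighbourhood of $(\lambda,u)$, and a finite subcover of $K$ yields a ball around $u$ inside the set. Hence each $E(j,\veps)$ is open, and everything reduces to proving it is dense.

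For density, fix $u_0\in\D$, $y=y_j\in\D$ and $\delta>0$, and look for $u\in E(j,\veps)$ with $\|u-u_0\|<\delta$ of the form $u=u_0+\sum_{i=1}^p S_{n_i,\mu_i}y$, for finitely many times $n_i\geq\kappa$ and parameters $\mu_i\in K$ to be chosen. Each ingredient is controlled by the hypotheses once we use the conventions $T_0=S_0=\mathrm{Id}$. Taking $n=0$ and $\mu=\lambda$ in $(1)(b)$ gives $\|S_{k,\lambda}y\|\leq c_k$, so $\sum_i\|S_{n_i,\mu_i}y\|\leq\sum_i c_{n_i}$ is as small as we wish when the $n_i$ are large, which by subadditivity of the $F$-norm forces $\|u-u_0\|<\delta$; similarly $n=0$ in $(1)(a)$ gives $\|T_{k,\lambda}u_0\|\leq c_k$, so $T_{n_i,\lambda}u_0$ is negligible. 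I would then arrange that each $\lambda\in K$ lies in a cell of some index $i$, meaning $0\leq\mu_i-\lambda\leq\tau/n_i$ with $\tau$ produced by $(2)$ for $\eta$ a small fraction of $\veps$, so that the main term obeys $\|T_{n_i,\lambda}S_{n_i,\mu_i}y-y\|\leq\eta$. For the cross terms $T_{n_i,\lambda}S_{n_{i'},\mu_{i'}}y$ with $i'\neq i$ I would apply $(1)(a)$ when $n_{i'}<n_i$ and $(1)(b)$ when $n_{i'}>n_i$, each giving the bound $c_{|n_i-n_{i'}|}$; summing and invoking summability of $(c_k)$ keeps their total below $\veps/4$, whence $\|T_{n_i,\lambda}u-y\|<\veps$.

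The decisive step, and the main obstacle, is building the discretization $\{(\mu_i,n_i)\}_{i=1}^p$ so as to reconcile three competing demands. The cross-term estimates require the pair $(\mu_i,n_i)$ to be \emph{co-monotone} (so that the sign conditions $\mu\leq\lambda$ in $(1)(a)$ and $\lambda\leq\mu$ in $(1)(b)$ hold for the relevant $\lambda$) and the gaps $|n_i-n_{i'}|$ to be large, while the covering requirement forces $\tau\sum_i 1/n_i\geq\diam(K)$, which caps how fast the $n_i$ may grow. Since condition $(2)$ serves parameters to the \emph{left} of $\mu_i$, I would fix a large spacing $g$, let $n_i$ decrease by steps of $g$ from a large initial time, and choose the $\mu_i\in K$ greedily from the top: $\mu_1=\max K$ and $\mu_{i+1}=\max\{\lambda\in K:\lambda<\mu_i-\tau/n_i\}$. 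This makes $\mu_i$ and $n_i$ decrease together (co-monotone) with all gaps $\geq g$, and because $\sum_i 1/n_i$ grows like $g^{-1}\log(n_1/n_p)$ it exceeds $\diam(K)/\tau$ once the initial time is large, so finitely many cells cover $K$. The technical core is verifying for this greedy choice that $\mu_{i'}\leq\lambda$ holds when $i'>i$ and $\lambda\leq\mu_{i'}$ when $i'<i$, for every $\lambda$ in cell $i$: this follows precisely because each new cell starts strictly below the bottom $\mu_i-\tau/n_i$ of the previous one, while the cells already placed lie above $\mu_i\geq\lambda$. Assembling the three estimates yields $u\in E(j,\veps)$ with $\|u-u_0\|<\delta$, which proves density and, with the reductions above, the theorem.
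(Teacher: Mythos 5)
Your proposal is correct, and its skeleton is the same as the paper's: Baire category applied to open sets built from a countable dense family, a candidate vector $u_0+\sum_{i}S_{n_i,\mu_i}y$, and the three-way estimate splitting the $u_0$-term, the diagonal term handled by hypothesis (2), and the cross terms handled by (1)(a)/(1)(b) via a co-monotone discretization tagged at the right endpoints of the cells. The genuine difference is the direction and manner of that discretization. The paper covers a compact interval $[a_s,b_s]$ from the left by deterministic cells $[\lambda_{i-1},\lambda_i]$ with $\lambda_i=\lambda_{i-1}+\tau/n_i$ and \emph{increasing} times $n_i=iN$, so termination is automatic from the divergence of $\sum_i \tau/(iN)$. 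You run the mirror image: \emph{decreasing} arithmetic times with spacing $g$ and tags chosen greedily downward from $\max K$, each new tag being the largest point of $K$ strictly below the bottom of the previous cell; co-monotonicity then gives the same sign conditions for (1)(a)/(1)(b). Your variant buys something real: the tags $\mu_i$ are genuine points of $K$, so $S_{n_i,\mu_i}$ is defined for an \emph{arbitrary} compact $K\subset\Lambda$, whereas the paper's opening reduction ``$\Lambda=\bigcup_s[a_s,b_s]$'' is not literally available for a general $\sigma$-compact set (e.g.\ a Cantor-type set) and implicitly needs the tags to lie in $\Lambda$; your greedy choice, which also skips over gaps of $K$, repairs this. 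The price is an a priori calibration the paper never needs: since your times decrease toward a floor, you must first fix the floor $M$ large (so that $\sum_{k\geq M}c_k$ and the terms $c_{n_i}$ are small) and then start at $n_1\geq M\exp\big(g\,\diam(K)/\tau\big)$ so that the logarithmic growth of $\sum_i 1/n_i$ exhausts $\diam(K)$ before the floor is reached. Both arguments are complete; yours is slightly more robust, the paper's slightly simpler.
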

For the sake of completeness of the text and clearness of the ensuing discussion, we give a complete proof of this result.
\begin{proof}
We first write $\Lambda=\bigcup_sK_s$, where the union is at most countable and each $K_s=[a_s,b_s]$ is a compact interval. Let $(V_t)_t$ be a countable basis of open sets for $X$ and define $A(s,t)=\{u\in X : \forall \lambda\in K_s, \exists m\in\NN\text{ s.t. }T_{m,\lambda}u\in V_t\}.$ Notice that $\bigcap_{s,t} A(s,t)$ is the set of universal vectors of $\{T_{n,\lambda} : n\in\NN, \lambda\in\Lambda\}$. We shall show that each $A(s,t)$ is open and dense in $X$. Openness comes from the fact that $K_s$ is compact and the family $\{T_{n,\lambda} : n\in\NN, \lambda\in\Lambda\}$ is continuous (we leave the details to the reader). Let $U$ be an arbitrary non-empty open subset of $X$ and let us fix $u_0\in U\cap \D$ and $v_t\in V_t\cap \D$. We take $\eta>0$ small so that $B_{\|\cdot\|}(u_0;\eta)\subset U$ and $B_{\|\cdot\|}(v_t;3\eta)\subset V_t$. Let $\kappa\in\NN$ and $(c_k)_k$ such that property \ref{CS01} is valid for both $u_0$ and $v_t$, and let $\tau>0$ such that property \ref{CS02} holds for $v_t$. We take $N\geq \kappa$ big enough so that $\sum_{k\geq N}c_k<\eta$ and  we define $n_i=iN$, for all $i\in\NN$. Consider the partition of $K_s$ defined inductively by $\lambda_0=a_s$ and $\lambda_i=\lambda_{i-1}+\frac{\tau}{n_i}$ for all $i\in\NN$. Since the series $\sum_i\frac{\tau}{n_i}=\sum_i\frac{\tau}{iN}$ diverges, there is $q\in\NN$ such that $\lambda_{q-1}<b_s\leq \lambda_q$. We reset $\lambda_q:=b_s$ and define $\Lambda_i=[\lambda_{i-1},\lambda_i]$ for $i=1,\dots,q$. Considering $u=u_0+\sum_{i=1}^qS_{n_i,\lambda_i}v_t,$ we show the density of $A(s,t)$ by verifying that $u\in A(s,t)\cap U$. From property \ref{CS01}, we get
\begin{align*}
    \|u-u_0\|&\leq \sum_{i=1}^q\|S_{n_i,\lambda_i}v_t\|
    \leq \sum_{i=1}^qc_{n_i}
    \leq \sum_{k\geq N}c_{k}<\eta,
\end{align*}
which implies that $u\in U$. Now, given $\lambda \in K=\bigcup_{i=1}^q\Lambda_i$, say $\lambda\in \Lambda_{i_0}$ for some $i_0\in\{1,\dots,q\}$, we choose $m=n_{i_0}$ and we apply properties \ref{CS01} and \ref{CS02}, together with $0\leq \lambda_{i_0}-\lambda\leq\frac{\tau}{n_{i_0}}$, in order to get
\begin{align*}
    \|T_{m,\lambda}u-v_t\|
    &\leq 2\sum_{k\geq N}c_k +\eta < 3\eta.
\end{align*}
This implies $T_{m,\lambda}u\in V_t$, that is, $u\in A(s,t)$ and the proof is complete (for a more detailed demonstration, see the \hyperlink{proof-cs}{proof} of Theorem \ref{cs-gamma-dim}).
\end{proof}
It is worth mentioning that there are two equivalent ways of stating condition \ref{CS02} in Theorem \ref{cs-1-dim}. The version presented here, which differs from the original, is an adaptation of \cite[Theorem 7.4]{BayartBook} to the context of universality  (see also \cite[Remark 7.15]{BayartBook} for a detailed discussion). In stark contrast with the original result of E. Abakumov and J. Gordon, the proof of the CS-Criterion does not rely on the construction of the common hypercyclic vector, but rather on the Baire Category Theorem. As we can see in the proof, this is possible to do once the parameter set has been \emph{discretized}, and this discretization satisfy fineness properties that match condition \ref{CS02}. It is interesting to notice that a similar construction cannot be easily adapted to the bi-dimensional family $(T_\lambda)_{\lambda\in(0,+\infty)^2}$ of Example \ref{Borichev}. Indeed, in one dimension we use that $\sum_{i}\frac{\tau}{iN}$ diverges in order to cover an interval by sub-intervals of sides $\frac{\tau}{iN}$, $i\in\NN$, no matter $\tau>0$ and $N\in\NN$, whereas in two dimensions one needs to cover a square by sub-squares of side $\frac{\tau}{iN}$. Thus, each sub-square has area $\frac{\tau^2}{(iN)^2}$ and, unfortunately, the series $\sum_i\frac{\tau^2}{(iN)^2}$ now converges. This problem was already acknowledged by G. Costakis and M. Sambarino in their original paper from 20 years ago, where they suggested that, for families parametrized by $d$-dimensional cubes, one should replace $\frac{\tau}{n}$ in condition \ref{CS02} by something like $\frac{\tau}{n^{1/d}}$ (see \cite[Section 8(2)]{CostakisSambarino}). In \cite[Theorem 1.5]{BCQCommon}, F. Bayart, Q. Menet and the author have obtained a generalization of the CS-Criterion where $\frac{\tau}{n}$ in condition \ref{CS02} is replaced by $\frac{\tau}{n^{\alpha}}$, for any $\alpha \in (0,1/d)$. However, a proof for the optimal statement $\alpha=1/d$, even in the bi-dimensional case, remained open until now. The difficulty is that, even though we know what the correct hypothesis should be, it is not an easy task to actually construct an optimal covering, for there is no ordering in $\RR^d$, when $d\geq2$, that allows us to optimally ``stack'' sub-cubes like we stack sub-intervals in the real line. The aim of this paper is to finally construct this covering.

Of course, the substitution of $\frac{\tau}{n}$ by $\frac{\tau}{n^{1/d}}$ in condition \ref{CS02} of Theorem \ref{cs-1-dim} is not the only correction to be made in a $d$-dimension generalization. In fact, since there is no natural ordering in $\RR^d$ for $d\geq 2$, all inequalities between indexes should be replaced by something else. It is proposed in \cite{BCQCommon} that we request properties \ref{CS01}(a) and \ref{CS01}(b) to hold for all parameters $\lambda$ and $\mu$ such that $\|\lambda-\mu\| \leq Dk^{\alpha}/(n+k)^{\alpha}$ for some $\alpha\in (0,1/d)$ and $D>0$. This assumption is very natural, as is comes from the characterization \cite[Theorem 2.1]{BCQCommon} of families products of weighted backward shifts admitting common hypercyclic vectors. More precisely, this is what is needed when we work with families of weights that behave like the one in Theorem \ref{dim-small} (see \cite[Remark 2.3]{BCQCommon} for more details). We shall prove the following result.
\begin{theorem}[$d$-dimensional CS-Criterion] \label{cs-d-dim}
Let $d\in\NN$, let $\Lambda\subset \RR^d$ be $\sigma$-compact and let $\{T_{n,\lambda} : n\in\NN, \lambda\in\Lambda\}$ be a continuous family of operators acting on an $F$-space $X$. Assume that there are $\D\subset X$ dense, maps $S_{n,\lambda}:\D\to \D$, $n\in\NN$, $\lambda\in\Lambda$, with $T_{n,\lambda}\circ S_{n,\lambda}=Id$, and $D>0$ such that, for all $u\in \D$ and all $K\subset \Lambda$ compact, the following properties hold true.
\begin{enumerate}[$(1)$]
\item There exist $\kappa\in\NN$ and a summable sequence of positive numbers $(c_k)_{k}$ such that, for all $\lambda,\mu\in K$ satisfying $\|\lambda-\mu\|_\infty\leq D\frac{k^{1/d}}{(n+k)^{1/d}}$, with $n,k\in\NN_0$ and $k\geq \kappa$, we have
     \[ \big\| T_{n+k,\lambda}S_{n,\mu} u\big\|\leq c_k \quad\quad\text{and}\quad\quad \big\| T_{n,\lambda}S_{n+k,\mu} u\big\|\leq c_k. \]
\item Given $\eta>0$, one can find $\tau>0$ such that, for all $n\in\NN$ and all $\lambda, \mu\in K,$
\[ \|\lambda-\mu\|\leq \frac{\tau}{n^{1/d}}\implies \left\|T_{n,\lambda} S_{n,\mu} u-u\right\|<\eta. \]
\end{enumerate}
Then the set of common universal vectors for the family $\{T_{n,\lambda}:n\in\NN, \lambda \in \Lambda\}$ is a dense $G_\delta$ subset of $X$. 
\end{theorem}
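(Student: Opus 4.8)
The plan is to keep the Baire-category skeleton of the proof of Theorem~\ref{cs-1-dim} intact and to replace only its one genuinely one-dimensional ingredient — the stacking of sub-intervals of lengths $\tau/n_i$ — by a higher-dimensional covering built from a space-filling curve. Before that, I would record one reduction with no $1$-dimensional analogue: since condition $(1)$ is vacuous unless $\|\lambda-\mu\|_\infty\leq D$, I may assume every compact piece of $\Lambda$ has $\|\cdot\|_\infty$-diameter smaller than $D$. Concretely, intersecting a $\sigma$-compact exhaustion of $\Lambda$ with a fixed grid of mesh $<D$ in $\RR^d$, I write $\Lambda=\bigcup_s K_s$ as an at most countable union of compact sets, each contained in a cube $Q_s$ of side $\delta_s<D$. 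With $(V_t)_t$ a countable basis of open sets and $A(s,t)=\{u\in X:\forall\lambda\in K_s,\ \exists\,m\in\NN,\ T_{m,\lambda}u\in V_t\}$, the set $\bigcap_{s,t}A(s,t)$ is exactly the set of common universal vectors, each $A(s,t)$ is open by compactness of $K_s$ and continuity of the family, and everything reduces to the density of each $A(s,t)$.

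To prove density, I fix a nonempty open $U$, choose $u_0\in U\cap\D$ and $v_t\in V_t\cap\D$, and pick $\eta>0$ with $B_{\|\cdot\|}(u_0;\eta)\subset U$ and $B_{\|\cdot\|}(v_t;3\eta)\subset V_t$; let $\kappa$ and $(c_k)_k$ be as in condition $(1)$ for both $u_0$ and $v_t$, and let $\tau$ be as in condition $(2)$ for $v_t$. The covering is where the dimension enters: I fix a $1/d$-Hölder surjection $\gamma\colon[0,1]\to Q_s$ (a Hilbert-type space-filling curve on the cube $Q_s$), with Hölder constant $C_\gamma$, set $n_i=iN$, and partition $[0,1]$ into consecutive intervals $J_i=[t_{i-1},t_i]$ of length $c/i$, where $c=\tau^d/(C_\gamma^d N)$. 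Since $\sum_i c/i$ diverges — the exact $d$-dimensional substitute for the divergence of $\sum_i\tau/(iN)$ used in one dimension — there is a finite $q$ with $\bigcup_{i=1}^q J_i=[0,1]$, so the images $\gamma(J_i)$ cover $Q_s\supseteq K_s$. Writing $\lambda_i=\gamma(t_i)$, the candidate vector is $u=u_0+\sum_{i=1}^q S_{n_i,\lambda_i}v_t$; by condition $(1)$ (with $n=0$ and $\lambda=\mu=\lambda_i$) one has $\|S_{n_i,\lambda_i}v_t\|\leq c_{n_i}$, so $\|u-u_0\|\leq\sum_i c_{iN}\leq\sum_{k\geq N}c_k<\eta$ for $N\geq\kappa$ large, placing $u$ in $U$.

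It remains to check, for $\lambda\in K_s$ lying in $\gamma(J_{i_0})$ and $m=n_{i_0}$, that $T_{m,\lambda}u\in V_t$, which is where the Hölder exponent does the real work. The diagonal term is controlled by condition $(2)$: by construction $\diam\gamma(J_{i_0})\leq C_\gamma(c/i_0)^{1/d}=\tau/n_{i_0}^{1/d}$, so $\|\lambda-\lambda_{i_0}\|\leq\tau/n_{i_0}^{1/d}$ and $\|T_{m,\lambda}S_{n_{i_0},\lambda_{i_0}}v_t-v_t\|<\eta$. For a cross term with $i\neq i_0$ I must verify the geometric hypothesis of condition $(1)$, namely $\|\lambda-\lambda_i\|_\infty\leq D\,|n_i-n_{i_0}|^{1/d}/\max(n_i,n_{i_0})^{1/d}$; since $k=|n_i-n_{i_0}|=|i-i_0|N\geq N\geq\kappa$, this then yields the bound $c_k$. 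I split the indices $i$ into a far and a near regime relative to $i_0$. In the far regime (index ratio bounded away from $1$) the right-hand side exceeds $\delta_s$, so the trivial estimate $\|\lambda-\lambda_i\|_\infty\leq\delta_s<D$ suffices — precisely the point that fails when $\diam K_s\geq D$ and that the initial reduction removes. In the near regime I use $\|\lambda-\lambda_i\|_\infty\leq C_\gamma|t-t_i|^{1/d}$ with $|t-t_i|\leq c\sum_{l}1/l$ over the indices between $i_0$ and $i$; a short computation shows this is dominated by $D\,|n_i-n_{i_0}|^{1/d}/\max(n_i,n_{i_0})^{1/d}$ once $N$ is large (which also secures $\sum_{k\geq N}c_k<\eta$). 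Summing, the cross terms over $i>i_0$ give at most $\sum_{k\geq N}c_k<\eta$, while the cross terms over $i<i_0$ together with the residual term $\|T_{m,\lambda}u_0\|\leq c_{n_{i_0}}$ combine into $\sum_{j=1}^{i_0}c_{jN}\leq\sum_{k\geq N}c_k<\eta$; with the diagonal term this gives $\|T_{m,\lambda}u-v_t\|<3\eta$, so $u\in A(s,t)\cap U$.

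The main obstacle is the covering construction together with the near-regime estimate. Everything hinges on choosing a $1/d$-Hölder curve so that a gap $|i-i_0|$ between the well-separated times $n_i=iN$ is converted into a spatial displacement of order $\bigl(|i-i_0|/\max(i,i_0)\bigr)^{1/d}$, which is exactly the tolerance $k^{1/d}/(n+k)^{1/d}$ allowed by condition $(1)$; matching these two exponents is what makes the \emph{optimal} value $1/d$ attainable, in contrast with the previously known range $\alpha<1/d$. A secondary but essential subtlety, absent in dimension one, is that the geometric hypothesis of condition $(1)$ caps admissible distances at $D$; this is reconciled by the preliminary refinement of the decomposition into pieces of $\|\cdot\|_\infty$-diameter $<D$, after which the far cross terms are disposed of by the trivial diameter bound.
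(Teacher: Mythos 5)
Your proof is correct in its essentials, but it takes a genuinely different route from the paper's. The paper deduces Theorem \ref{cs-d-dim} from the generalized criterion (Theorem \ref{cs-gamma-dim}), whose engine is the discrete covering Lemma \ref{key:lemma}: a combinatorial construction on a nested, lexicographically ordered family of coverings (the HBD$^\circ$ structure), with the proximity estimate \eqref{cerise} obtained by index-counting arguments (ranks, fineness, Lemma \ref{dSlemma:general}); the cube enters only through the observation that pseudo-Hilbert orderings of dyadic partitions give it HBD$^\circ$ at most $d$. You instead bypass this machinery by pulling the problem back to $[0,1]$ through an explicit $1/d$-Hölder space-filling surjection of $[0,1]$ onto a cube containing each compact piece: the harmonic stacking $|J_i|=c/i$ is exactly the one-dimensional Costakis--Sambarino discretization, and the Hölder exponent converts the interval estimate $\sum_{i=j}^{l}1/i\lesssim (l-j)/l$ (valid when $j\gtrsim\theta l$) into the tolerance $D\big(\tfrac{l-j}{l}\big)^{1/d}$ demanded by condition $(1)$, while your preliminary refinement into pieces of small diameter disposes of the far regime; this far/near dichotomy mirrors the case analysis (a)/(b) in Section \ref{sec:lemma:proof}, and the whole argument is the continuous avatar of the paper's treatment of Hölder curves in Section \ref{subsec:holder}. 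Your route is shorter and more elementary for the statement at hand, at the price of invoking the classical fact that $d$-dimensional Hilbert curves are $1/d$-Hölder; the paper's intrinsic, discrete construction buys generality, since it applies to fractal parameter sets (gasket, Koch curve, Minkowski sausage) with $\gamma<d$ for which no Hölder parametrization is given in advance. Two small points to tighten: in the far regime you need $\delta_s\leq D(1-\theta)^{1/d}$ rather than merely $\delta_s<D$, which is harmless since the threshold $\theta$ may be chosen after the grid and $N$ after $\theta$; and your tags (images of the interval endpoints under the curve) may fall outside $\Lambda$, so the maps $S_{n_i,\lambda_i}$ are, strictly speaking, not provided by the stated hypotheses --- but the paper's own proof shares this exact imprecision (its tags are corners of covering squares), resolved in both cases by assuming the operators and conditions live on a cube containing the parameter set, as they do in all the paper's applications.
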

We shall obtain Theorem \ref{cs-d-dim} as a consequence of a much more general result (Theorem \ref{cs-gamma-dim}), with applications to a large class of fractal curves and shapes. In particular, we will prove the following.
\begin{corollary}\label{corol:intro}
Let $X=c_0$ or $\ell_p$, $p\in[1,+\infty)$, $d\in\NN$ and let $\Lambda\subset (0,+\infty)^d$ be the graph of a $\beta$-Hölder curve, for some $\beta\in(0,1].$ Consider the family of weights $\big(w(x)\big)_{x>0}$ defined by $w_1(x)\cdots w_n(x)=\exp(xn^\alpha)$ for all $n\in\NN$ and $x>0$. Then $\bigcap_{\lambda\in\Lambda}HC\big(B_{w(\lambda_1)}\times \cdots\times B_{w(\lambda_d)}\big)\neq\varnothing$ whenever $\alpha\in(0,\beta]$.
\end{corollary}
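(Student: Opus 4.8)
The plan is to reduce the problem to a one-parameter family and then to feed it into a Costakis--Sambarino type criterion adapted to the curve (the general Theorem \ref{cs-gamma-dim} announced above); the whole point will be that a $\beta$-Hölder reparametrization converts the intrinsic scale $n^{-\alpha}$ of the shifts into an admissible one-dimensional scale $n^{-\alpha/\beta}$, which is integrable precisely when $\alpha\le\beta$. Write $\Lambda=\gamma([a,b])$ with $\gamma=(\gamma_1,\dots,\gamma_d)\colon[a,b]\to(0,+\infty)^d$ satisfying $\|\gamma(s)-\gamma(t)\|\le C|s-t|^\beta$. Since $\bigcap_{\lambda\in\Lambda}HC(T_\lambda)=\bigcap_{s\in[a,b]}HC(T_{\gamma(s)})$, it suffices to produce a common universal vector for the one-parameter family $\{T_{n,\gamma(s)}:n\in\NN,\ s\in[a,b]\}$, where $T_{n,\lambda}=B_{w(\lambda_1)}^{\,n}\times\cdots\times B_{w(\lambda_d)}^{\,n}$, the right inverses are $S_{n,\lambda}=S_{w(\lambda_1)}^{\,n}\times\cdots\times S_{w(\lambda_d)}^{\,n}$, and $\D$ is the set of finitely supported vectors. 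Writing $W_x(m)=\exp(xm^\alpha)$, the computation on which everything rests is the single-coordinate identity
\[
 B_{w(x)}^{\,p}\,S_{w(y)}^{\,q}e_m=\frac{W_y(m)\,W_x(m+q)}{W_y(m+q)\,W_x(m+q-p)}\,e_{m+q-p},
\]
valid whenever $m+q\ge p$ (and equal to $0$ otherwise). All three hypotheses will be read off from the exponent of this scalar factor.

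For condition (1) I would separate the two shift directions. In the backward case $T_{n+k,\lambda}S_{n,\mu}$ one has $p=n+k$, $q=n$, so the net displacement is $-k$: on a vector supported in $\{0,\dots,M\}$ the operator vanishes as soon as $k>M$, and taking $\kappa=M+1$ makes this half of (1) hold with $c_k=0$. In the forward case $T_{n,\lambda}S_{n+k,\mu}$ one has $p=n$, $q=n+k$, and the exponent equals
\[
 (\lambda_j-\mu_j)(m+n+k)^\alpha+\mu_j m^\alpha-\lambda_j(m+k)^\alpha .
\]
Here I would use that $K$ is a compact subset of $(0,+\infty)^d$, so each coordinate is bounded below by some $x_{\min}>0$; the last term is then $\le-x_{\min}k^\alpha$, while the cross term is tamed by the distance hypothesis $\|\lambda-\mu\|_\infty\le D\,(k/(n+k))^{\alpha}$ (the analogue of condition (1) in Theorem \ref{cs-d-dim}), which gives $(\lambda_j-\mu_j)(m+n+k)^\alpha\le DC_M\,k^\alpha$. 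Choosing $D$ small so that $DC_M<x_{\min}$ yields $c_k\le C\exp(-c\,k^\alpha)$, summable for every $\alpha>0$.

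Condition (2) is where $\alpha\le\beta$ is spent. For $p=q=n$ the exponent collapses to $(\lambda_j-\mu_j)\big[(m+n)^\alpha-m^\alpha\big]$, and subadditivity of $t\mapsto t^\alpha$ on $[0,+\infty)$ (recall $\alpha\le1$) gives $0\le(m+n)^\alpha-m^\alpha\le n^\alpha$. Taking $\lambda=\gamma(s)$, $\mu=\gamma(t)$ and using $|\gamma_j(s)-\gamma_j(t)|\le C|s-t|^\beta$, the exponent is dominated in absolute value by $C|s-t|^\beta n^\alpha$, which is $\le C\tau^\beta$ as soon as $|s-t|\le\tau\,n^{-\alpha/\beta}$. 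Thus condition (2) holds at the one-dimensional scale $n^{-\alpha/\beta}$, and since $\alpha\le\beta$ forces $\alpha/\beta\le1$, the series $\sum_i (iN)^{-\alpha/\beta}$ diverges; exactly as in the proof of Theorem \ref{cs-1-dim}, this divergence is what lets the interval $[a,b]$ be exhausted by cells of length $\tau\,n_i^{-\alpha/\beta}$.

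The main obstacle is the covering step, which is the real content of the general criterion I am invoking. Forming $u=u_0+\sum_i S_{n_i,\gamma(s_i)}v_t$ and applying $T_{n_{i_0},\gamma(s)}$ produces, besides the main term controlled by condition (2), the error terms $T_{n_{i_0},\gamma(s)}S_{n_i,\gamma(s_i)}v_t$ for $i\ne i_0$. The backward terms ($i<i_0$) vanish by the truncation above, but each forward term ($i>i_0$) is controlled only once the pair $(\gamma(s),\gamma(s_i))$ meets the distance hypothesis of condition (1), namely $\|\gamma(s)-\gamma(s_i)\|\le D\big((n_i-n_{i_0})/n_i\big)^{\alpha}$. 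The naive increasing partition of dimension one does \emph{not} achieve this for far-apart indices when $\alpha<\beta$, since the accumulated parameter distance $\sum_{j}\tau n_j^{-\alpha/\beta}$ outgrows the admissible threshold. The genuine work, therefore, is to build a discretization $(s_i,n_i)$ of $[a,b]$ that simultaneously has cells of length $\sim\tau n_i^{-\alpha/\beta}$ and keeps every relevant pair within the distance allowed by condition (1). Granting such a covering (Theorem \ref{cs-gamma-dim}), the Baire argument of Theorem \ref{cs-1-dim} then shows each set $A(s,t)$ is dense, and any vector in $\bigcap_{s,t}A(s,t)$ is the desired common hypercyclic vector.
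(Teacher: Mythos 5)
Your proposal is correct in substance and, at bottom, runs along the same lines as the paper: verify conditions \ref{CS1}/\ref{CS2} for the product of weighted shifts (your scalar identity and the two exponent estimates are exactly the computations in the proof of Theorem \ref{app:main}), and then delegate the covering problem --- which you rightly identify as the real content, since the distance-constrained form of condition (1) defeats the naive one-dimensional stacking of Theorem \ref{cs-1-dim} --- to Theorem \ref{cs-gamma-dim}. The one genuine difference is \emph{where} you apply that theorem. You pull everything back to the parameter interval: the family $\{T_{n,\gamma(s)} : s\in[a,b]\}$ satisfies \ref{CS2} at the scale $\tau/n^{\alpha/\beta}$ and \ref{CS1} under $|s-t|\le D'\big(k/(n+k)\big)^{\alpha/\beta}$, so you are in effect invoking Theorem \ref{cs-gamma-dim} with $d=1$ and exponent $\gamma'=\beta/\alpha\geq 1$. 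The paper instead applies the criterion to the image $\Lambda\subset\RR^d$ with $\gamma=1/\beta$, after checking (Section \ref{subsec:holder}) that a $\beta$-H\"older image has HBD$^\circ$ at most $1/\beta$, the ordered coverings being the pushforwards of the dyadic intervals. The two routes are equivalent for curves (the paper's HBD$^\circ$ structure on $\Lambda$ is precisely your discretization of $[a,b]$ in disguise), and yours has the small advantage that the geometric hypothesis becomes trivial; the paper's formulation is the one that extends beyond curves (Sierpi\'nski gasket, squares, seed-generated fractals).

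Two points you should still patch. First, to invoke Theorem \ref{cs-gamma-dim} you must verify its hypothesis that the parameter set has HBD$^\circ$ at most $\gamma'$; in your setting this is the easy but unstated fact that $[a,b]$, with its dyadic subdivisions ordered left to right, has HBD$^\circ$ at most $1\le\beta/\alpha$ --- and note that this is exactly where $\alpha\le\beta$ enters your route, since for $\alpha>\beta$ an interval cannot have HBD$^\circ$ at most $\beta/\alpha<1$ (a covering by $r^m$ sets of diameter $O\big(r^{-m/\gamma'}\big)$ with $\gamma'<1$ has total length tending to $0$). Second, in the theorem the constant $D$ is quantified \emph{before} $u$, so you cannot ``choose $D$ small so that $DC_M<x_{\min}$''; instead fix $D$ in terms of $x_{\min}$ and the H\"older constant only, and absorb the dependence on the support bound $M$ into $\kappa$ and $(c_k)$, exactly as in the proof of Theorem \ref{app:main}, where the analogous inequality is secured by taking $\kappa$ large relative to the support of $u$. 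With these repairs your argument is complete modulo the covering Lemma \ref{key:lemma}, which is the paper's main technical contribution and which your proposal, reasonably, takes as given.
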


More applications and examples will be discussed in Section \ref{sec:app}. In Section \ref{sec:cs-general} we define a slightly different homogeneous box dimension and we state and prove our main result in its general form. In Section \ref{sec:some-fractals} we calculate the dimensions of some well known fractals. In Section \ref{sec:covering}, we explain how we were able to achieve optimally through the construction of a special covering. In Section \ref{sec:prob} we present some open problems in common hypercyclicity.

\subsection{Some notation and terminology}

All sets of parameters that we are going to consider in this article are $\sigma$-compact subsets of the $d$-dimensional euclidean space $\RR^d$, $d\geq 1$, equipped with the maximum norm. We write parameters $\lambda\in\RR^d$ in the form $\lambda=(\lambda(1),\dots, \lambda(d))$. Hence, for any $\lambda,\mu\in\RR^d$, \[\|\lambda-\mu\|:=\max_{j=1,\dots,d}|\lambda(j)-\mu(j)|.\]
For parameters $\lambda\in \RR^2$, we simplify the notation by writing $\lambda=(x,y)$. The \emph{diameter} of any $\Lambda\subset\RR^d$ is defined as \[\diam(\Lambda):=\sup\{\|\lambda-\mu\| : \lambda,\mu\in\Lambda\}.\] The set of positive integers is denoted by $\NN$ and we define $\NN_0:=\NN\cup\{0\}$. We also use the notations $\RR_+:=(0,+\infty)$, $X^d := X\times\overset{d}{\cdots}\times X$ for $d$-folds of a single object $X$ and $\bigoplus_{j=1}^d X_j$ for $d$-folds of objects $X_1,\dots, X_d$. For multi-indexes, given $r,m\in\NN$, we define $I_r=\{1,\dots, r\}$ and we order (when needed) $I_r^m$ with the lexicographical order (sometimes denoted by $\prec$). We use bold letters ${\bf i}\in I_r^m$ for multi-indexes, which are written in the form ${\bf i}=(i_1,\dots,i_m)$.

Let us introduce some terminologies, notations and abbreviations proper to classical fractal geometry of self-similar sets. We refer to \cite{falconer} for a more extensive discussion. Let $d,r\in\NN$, $c_1,\dots, c_r\in(0,1)$ and, for each $i=1,\cdots, r$, let $\vphi_i:\RR^d\to\RR^d$ be a \emph{similarity} with \emph{contraction ratio} (or simply \emph{ratio}) $c_i$, that is, $\vphi_i$ satisfies \[\|\vphi_i(\lambda)-\vphi_i(\mu)\|= c_i\|\lambda-\mu\|, \quad\forall \lambda,\mu\in\RR^d.\] The set $\{\vphi_1, \dots , \vphi_r\}$ is called an \emph{iterated function system} (IFS). We say that it satisfies the \emph{open set condition} (OSC) whenever there exists a non-empty open set $V\subset \RR^d$ such that $\bigcup_{i=1}^d\vphi_i(V)\subset V$ and $\vphi_i(V)\cap \vphi_j(V)=\varnothing$ for all $i\neq j$ in $\{1,\dots, r\}$. The OSC ensures that there is not a significant overlap between the components $\vphi_i(\Lambda)$ of $\Lambda$.

The main feature of an IFS is that there exists a unique compact $\Lambda\subset \RR^d$ such that $\Lambda = \bigcup_{i=1}^r\vphi_i(\Lambda),$
which is called the \emph{attractor} of the IFS $\{\vphi_1, \dots , \vphi_r\}$. In this case, we say that $\Lambda$ is \emph{self-similar} and we define its \emph{similarity dimension} as the number $s\in(0,d]$ such that $\sum_{i=1}^rc_i^s=1$. We know that $s=\dim_\H(\Lambda)$ (the Hausdorff dimension of $\Lambda$) whenever $\{\vphi_1, \dots , \vphi_r\}$ satisfies the OSC. We say that $\Lambda$ is \emph{uniformly contracting} when $c_1=\cdots=c_r=:c$. Notice that, in this case, its similarity dimension is $s=-\frac{\log r}{\log c}$.

Given a self-similar fractal $\Lambda$, attractor of an IFS $\{\vphi_1,\cdots,\vphi_r\}$, one can fix some sufficiently large compact domain $\Lambda_0$ such that $\vphi_i(\Lambda_0)\subset \Lambda_0$, for each $i=1,\dots, r$, and use its IFS to describe $\Lambda$ iteratively. In other words, we can write
\[\Lambda = \bigcap_{m=0}^{+\infty}\bigcup_{{\bf i}\in I_r^m}\vphi_{i_1}\circ\cdots\circ \vphi_{i_m}(\Lambda_0).\]
We usually refer to each $m$ as the \emph{resolution} of the fractal $\Lambda$. Notice that, for each $m$, $\big(\vphi_{i_1}\circ\cdots\circ \vphi_{i_m}(\Lambda_0)\big)_{{\bf i}\in I_r^m}$ is a compact covering of $\Lambda$.

Let us finish this section with a terminology related to \emph{curves}, that is, images $\Lambda=f([0,1])$ of continuous functions $f:[0,1]\subset \RR\to\RR^d$, where $d\geq 2$. We say that $\Lambda$ is a \emph{space-filling curve} when it has non-empty interior. Notice that this definition is not standard, but it is adequate for the purpose of this article. Most known space-filling curves are obtained as the pointwise limit of a sequence of continuous functions $f_m:[0,1]\to \RR^d$, $m\in\NN_0$. When this is the case, we shall call each $f_m([0,1])$ a \emph{pseudo-space-filling curve}. Sometimes the limit curve $\Lambda$ has empty interior, but $\dim_\H>1$. In this case, some authors use the term \emph{fractal-filling curve}, but again this is not standard. In fact, fractals are typically objects with detailed structures at infinitely small scales. In this sense, the graph of the Takagi function is a fractal, although its Hausdorff dimension is 1 (see Problem \ref{prob:takagi}). Whenever a ``fractal curve'' is limit of some sequence $(f_m)_m$, we will use the prefix \emph{pseudo} to indicate the functions $f_m$, $m\in\NN_0$, and we sometimes refer to $m$ as the \emph{resolution} or \emph{order} of the associated pseudo-fractal curve.
\section{A generalized CS-Criterion} \label{sec:cs-general}

In order to state our main result in full generality, we need a specific kind of homogeneous box dimension as defined in \cite[Page 1766]{BCQCommon}. In fact, this new definition changes only for a property that is added to the old one.
\vspace{3mm} 

\noindent {\bf Definition.} Let $d\geq 1$ and $\Lambda\subset \RR^d$ be compact. We say that $\Lambda$ has \emph{homogeneously ordered box dimension} (HBD$^\circ$ for short) \emph{at most} $\gamma\in(0,d]$ if there exist $r\geq 2$, $\rho>0$ and, for all $m\geq 1$, a compact covering $(\Lambda_{\bf i})_{{\bf i}\in I_r^m}$ of $\Lambda$ satisfying the following.
\begin{enumerate}[(i)]
    \item \label{hbd:i} For all ${\bf i}\in I_r^m$, 
    $\diam(\Lambda_{\bf i})\leq \rho(1/r^{1/\gamma})^m.$
    \item \label{hbd:ii} For all ${\bf i}=(i_1,...,i_m,i_{m+1})\in I_r^{m+1}$, $\Lambda_{i_1,...,i_{m+1}}\subset \Lambda_{i_1,...,i_{m}}.$
    \item \label{hbd:iii} For all ${\bf i}=(i_1,...,i_m)\in I_r^m$ and all $j\in\{2,...,r\}$, $\Lambda_{{\bf i},j-1,r}\cap \Lambda_{{\bf i}, j,1}\neq \varnothing.$
\end{enumerate}
The \emph{homogeneously ordered box dimension of $\Lambda$} is the smallest number $\gamma$ such that $\Lambda$ has HBD$^\circ$ at most $\gamma$. Of course, whenever a compact subset $\Lambda\subset \RR^d$ has HBD$^\circ$ at most $\gamma$, then it also has HBD at most $\gamma$. In many interesting cases (as we shall see in Section \ref{sec:some-fractals}), the properties of the HBD discussed in \cite[Section 4.2]{BCQCommon} remain true for the HBD$^\circ$.

\begin{figure}[H]
    \centering
    \includegraphics[width=0.75\textwidth]{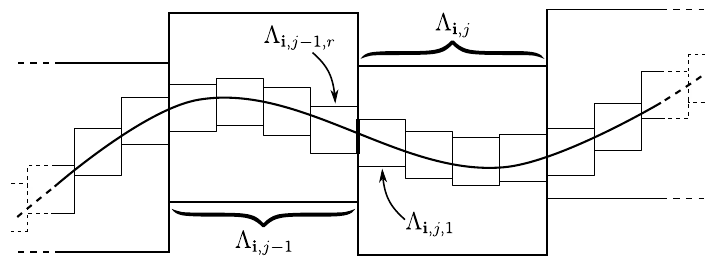}
    \caption{Connectedness property of the homogeneously ordered box dimension}
    \label{fig:ord}
\end{figure}

The addition of property \ref{hbd:iii} in the definition of HBD$^\circ$ imposes, at the same time, path connectivity and ordering on the family of coverings. Indeed, if ${\bf j}$ comes right after ${\bf i}$ in the lexicographical order, then \ref{hbd:iii} requires that the last subdivision part of $\Lambda_{\bf i}$ intersects the first subdivision part of $\Lambda_{\bf j}$ (see Figure \ref{fig:ord} for a graphical representation).

Later on we will discuss how the possibility of calculating this dimension allow us to define a continuous fractal-filling curve (see Section \ref{sec:prob}). With this concept of dimension at hand, we can state our main result.

\begin{theorem}[Generalized CS-Criterion] \label{cs-gamma-dim}
Let $d\in\NN$, $\gamma\in\RR^+$, $\Lambda\subset \RR^d$ compact with HBD$^\circ$ at most $\gamma$ and let $\{T_{n,\lambda} : n\in\NN, \lambda\in\Lambda\}$ be a continuous  family of operators acting on an $F$-space $X$. Assume that there are $\D\subset X$ dense, maps $S_{n,\lambda}:\D\to \D$, $n\in\NN$, $\lambda\in\Lambda$, such that $T_{n,\lambda}\circ S_{n,\lambda}=Id$ and $D>0$ such that, for all $u\in \D$, the following properties hold true.
\begin{enumerate}[start=1,label={\upshape(CS\arabic*)},wide = 0pt, leftmargin = 3em]
\item \label{CS1} There exist $\kappa\in\NN$ and a summable sequence of positive numbers $(c_k)_{k}$ such that, for all $\lambda,\mu\in K$ satisfying $\|\lambda-\mu\|\leq D\frac{k^{1/\gamma}}{(n+k)^{1/\gamma}}$, with $n\geq 0$ and $k\geq \kappa$, we have
     \[ \big\| T_{n+k,\lambda}\circ S_{n,\mu} u\big\|\leq c_k \quad\quad\text{and}\quad\quad \big\| T_{n,\lambda}\circ S_{n+k,\mu} u\big\|\leq c_k. \]
\item \label{CS2} Given $\eta>0$, one can find $\tau>0$ such that, for all $n\in\NN$ and all $\lambda,\mu\in K$,
\[ \|\lambda-\mu\|\leq \frac{\tau}{n^{1/\gamma}}\implies \left\|T_{n,\lambda}\circ S_{n,\mu} u-u\right\|\leq\eta. \]
\end{enumerate}
Then the set of common universal vectors for the family $\{T_{n,\lambda}:n\in\NN, \lambda \in \Lambda\}$ is a dense $G_\delta$ subset of $X$.
\end{theorem}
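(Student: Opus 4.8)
The plan is to keep the Baire-category skeleton of the proof of Theorem~\ref{cs-1-dim} and to replace the explicit one-dimensional subdivision of an interval by the ordered covering furnished by the HBD$^\circ$ hypothesis. Since $\Lambda$ is compact I take a single compact $K=\Lambda$. Fix a countable basis $(V_t)_t$ of open sets and set $A(t)=\{u\in X : \forall \lambda\in\Lambda,\ \exists m\in\NN,\ T_{m,\lambda}u\in V_t\}$. As in the one-dimensional case, $\bigcap_t A(t)$ is exactly the set of common universal vectors, and each $A(t)$ is open: for $u\in A(t)$ one chooses, for every $\lambda$, an iterate $m_\lambda$ with $T_{m_\lambda,\lambda}u\in V_t$, uses joint continuity of $(\lambda,u)\mapsto T_{m,\lambda}u$ to keep this valid on a product neighbourhood of $(\lambda,u)$, and extracts a finite subcover of $\Lambda$ by compactness. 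By Baire it then suffices to show each $A(t)$ is dense. So I fix a nonempty open $U$, pick $u_0\in U\cap\D$ and $v_t\in V_t\cap\D$, choose $\eta>0$ with $B_{\|\cdot\|}(u_0;\eta)\subset U$ and $B_{\|\cdot\|}(v_t;3\eta)\subset V_t$, extract $\kappa$ and a summable $(c_k)_k$ from \ref{CS1} valid for both $u_0$ and $v_t$, take $\tau>0$ from \ref{CS2} for $v_t$, and fix $N\geq\kappa$ with $\sum_{k\geq N}c_k<\eta$.

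The heart of the argument is to assemble, out of the coverings $(\Lambda_{\bf i})_{{\bf i}\in I_r^m}$, a single finite family $P_1,\dots,P_q$ of pieces tiling $\Lambda$, each $P_j$ carrying a scale $n_{P_j}=jN$ and a base point $\lambda_{P_j}\in P_j$, with three features. First, \emph{diagonal control}: $\diam(P_j)\leq \tau/n_{P_j}^{1/\gamma}$, so that \ref{CS2} gives $\|T_{n_{P_j},\lambda}\circ S_{n_{P_j},\lambda_{P_j}}v_t-v_t\|\leq\eta$ for all $\lambda\in P_j$. Second, \emph{spread}: as the scales are the distinct multiples $N,2N,\dots,qN$, each value of $|n_P-n_{P'}|$ is attained by at most two pieces, whence $\sum_{P'\neq P}c_{|n_P-n_{P'}|}\leq 2\sum_{k\geq N}c_k<\eta$. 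Third, \emph{cross control}: for any two pieces $P,P'$ and any $\lambda\in P$, writing $k=|n_P-n_{P'}|\geq N\geq\kappa$, one has $\|\lambda-\lambda_{P'}\|\leq D\,k^{1/\gamma}/\max(n_P,n_{P'})^{1/\gamma}$, so that \ref{CS1} bounds $\|T_{n_P,\lambda}\circ S_{n_{P'},\lambda_{P'}}v_t\|$ by $c_k$. To realise this I select pieces from \emph{increasing} resolutions as the scale grows, choosing for $P_j$ a resolution $m$ with $\rho\, r^{-m/\gamma}\approx \tau/(jN)^{1/\gamma}$ (so that resolution $m$ corresponds to scales of order $r^m$), and I order all selected pieces by the lexicographic traversal of the index tree. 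Property \ref{hbd:iii} makes consecutive pieces of this traversal touch, while nesting \ref{hbd:ii} forces pieces sharing a long common prefix to sit inside a common small ancestor of diameter $\leq\rho\,r^{-p/\gamma}$; these are the two mechanisms that convert ``close in scale'' into ``close in space''.

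Granting such a family I set $u=u_0+\sum_{j=1}^q S_{n_{P_j},\lambda_{P_j}}v_t$. Exactly as in the proof of Theorem~\ref{cs-1-dim}, the spread together with \ref{CS1} gives $\|u-u_0\|\leq\sum_{j}\|S_{n_{P_j},\lambda_{P_j}}v_t\|\leq\sum_{k\geq N}c_k<\eta$, so $u\in U$. Given $\lambda\in\Lambda$ it lies in some piece $P_{j_0}$; choosing $m=n_{P_{j_0}}$ and splitting $T_{m,\lambda}u-v_t$ into the $u_0$-term, the diagonal term $T_{n_{P_{j_0}},\lambda}\circ S_{n_{P_{j_0}},\lambda_{P_{j_0}}}v_t-v_t$, and the off-diagonal sum over $P_j\neq P_{j_0}$, the three features bound these respectively by $\sum_{k\geq N}c_k$, by $\eta$ (via \ref{CS2}), and by $2\sum_{k\geq N}c_k$ (via \ref{CS1} and cross control), so that $\|T_{m,\lambda}u-v_t\|<3\eta$ and $T_{m,\lambda}u\in V_t$. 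Hence $u\in A(t)\cap U$, giving density, and the Baire theorem finishes the proof.

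The step I expect to be the main obstacle is realising the \emph{cross control} for all pairs of pieces simultaneously, not merely for neighbours. In Theorem~\ref{cs-1-dim} condition \ref{CS01} asks only for the correct \emph{ordering} of the parameters and therefore holds for every pair irrespective of distance; here the bound $c_k$ is available only while the spatial separation stays below $D\,k^{1/\gamma}/(n+k)^{1/\gamma}$. A naive chaining through the touching pieces of \ref{hbd:iii} controls neighbours but badly overestimates the separation of far-apart pieces (for $\gamma=1$ the chained distance grows like the logarithm of the scale ratio while the admissible threshold saturates near $D$), so the covering must be packed so tightly that pieces at scale-gap $k$ are genuinely within $\approx D\,k^{1/\gamma}/n^{1/\gamma}$ of one another. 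Producing such an optimal ordered and connected covering — exploiting the hierarchical gain of \ref{hbd:ii} for pieces with long common prefixes and \ref{hbd:iii} across adjacent blocks — is precisely the content of Section~\ref{sec:covering} and is what pushes the exponent to the optimal $1/\gamma$; with that covering in hand the verification above goes through.
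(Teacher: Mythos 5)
Your Baire-category skeleton and the final verification are exactly the paper's: decompose into open sets $A(\cdot)$, reduce to density, build $u=u_0+\sum_j S_{n_j,\lambda_j}v_t$ from a tagged covering whose $j$-th piece has diameter $\leq\tau/(jN)^{1/\gamma}$ (feeding \ref{CS2}) and whose pairwise separations stay below $D\bigl((l-j)/l\bigr)^{1/\gamma}$ (feeding \ref{CS1}). However, the proof has a genuine gap precisely where you flag it: you \emph{assume} the existence of the covering with the ``cross control'' property and never construct it. That covering is the paper's Lemma \ref{key:lemma}, whose proof occupies all of Section \ref{sec:covering}, and it is the actual mathematical content of the theorem --- everything else is a routine adaptation of Theorem \ref{cs-1-dim}. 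Your one-sentence sketch (``select pieces from increasing resolutions as the scale grows, with $\rho\,r^{-m/\gamma}\approx\tau/(jN)^{1/\gamma}$, ordered lexicographically'') is the right starting point, but it does not by itself yield the uniform bound $\|\lambda-\mu\|\leq D\bigl((l-j)/l\bigr)^{1/\gamma}$ for \emph{all} pairs $j<l$: one must exploit that whenever the enumeration reaches the end of a block, the square sizes $\tau/(kN)^{1/\gamma}$ have shrunk by exactly one factor $c=r^{-1/\gamma}$ (a ``perfect fit''), then group the pieces into nested subdivision parts of prescribed rank and fineness, prove a counting lemma (the paper's Lemma \ref{dSlemma:general}) converting spatial separation into a lower bound on the number of intervening indices, and run a case analysis on whether the two pieces lie in the same, adjacent, or distant subdivision parts. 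None of this is present or even outlined in enough detail to be checkable, so what you have is a correct reduction of the theorem to Lemma \ref{key:lemma}, not a proof of the theorem.

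A second, smaller gap: you take $K=\Lambda$ outright, but your own ``cross control'' is impossible whenever $\diam(\Lambda)>D$, since for pieces at scale gap $k=n_l-n_j$ with $n_j=N$ and $n_l=lN$ large the admissible separation $D\bigl((l-1)/l\bigr)^{1/\gamma}$ saturates below $D$; likewise your bound $\|S_{n_j,\lambda_j}v_t\|\leq c_{n_j}$ applies \ref{CS1} with $n=0$, which requires all of $K$ to lie within distance $D$ of $\lambda_j$. The paper handles this by first splitting $\Lambda$ into finitely many compact pieces $K_s$, each still having HBD$^\circ$ at most $\gamma$ with the same $r$ but with $\rho$ small enough that $D\geq\rho r^{3/\gamma}$ (and in particular $D>\diam(K_s)$); this preliminary decomposition is not optional and should be restored at the start of your argument.
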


The proof of Theorem \ref{cs-gamma-dim} requires the construction of a special covering for the set of parameters satisfying good proximity properties. We shall need the following key lemma.

\begin{lemma}\label{key:lemma} Let $d\in\NN$, $\gamma\in \RR_+$ and $\Gamma\subset \RR^d$ be compact with with HBD$^\circ$ at most $\gamma$. Let $D\geq \rho r^{3/\gamma}$, where $r\geq 2,\rho>0$ come from the definition of HBD$^\circ$ applied to $\Gamma$. Then, for all $\tau>0$ and all $N\in\NN$, there exist $q\in\NN$ and a tagged covering $(\lambda_k,\Gamma_k)_{k=1,...,q}$ of $\Gamma$ having the form
\begin{equation}\label{fraise}
    \Gamma_k=\prod_{i=1}^d\bigg[\lambda_k(i), \lambda_k(i)+\frac{\tau}{(kN)^{1/\gamma}}\bigg],
\end{equation}
for some $\lambda_k=(\lambda_k(1), \dots, \lambda_k(d)), k=1,..., q$, and satisfying
    \begin{equation}\label{cerise}
    \|\lambda-\mu\|\leq D\Big(\frac{l-j}{l}\Big)^{1/\gamma},
    \end{equation}
for all $1\leq j < l \leq q$ and all $\lambda\in\Gamma_j, \mu\in\Gamma_l$.
\end{lemma}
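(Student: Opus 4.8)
The plan is to exploit the tree structure furnished by the HBD$^\circ$ coverings. For each resolution $m$ the pieces $(\Lambda_{\bf i})_{{\bf i}\in I_r^m}$ form the nodes of an $r$-ary tree via the nesting property (ii), lexicographically ordered on each level, and the crucial extra input is the chaining property (iii): the all-$r$ descendant of a cell meets the all-one descendant of its lex-successor, so that lex-consecutive pieces at any level are spatially adjacent at the next scale. Since a set of diameter at most $\rho(1/r^{1/\gamma})^m$ fits inside a cube of side $\rho r^{-m/\gamma}$, each resolution-$m$ piece can be covered by one such cube; the task is to match these cubes to the prescribed side lengths $s_k=\tau/(kN)^{1/\gamma}$ appearing in \eqref{fraise} and to order them so that \eqref{cerise} holds.

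First I would fix the index-to-resolution correspondence. Writing $C=\tau^\gamma/(\rho^\gamma N)$, the cube of side $s_k$ can cover a resolution-$m$ piece exactly when $s_k\geq\rho r^{-m/\gamma}$, i.e. when $k\leq Cr^m$. I would therefore let $\Gamma_k$ cover a piece of resolution $m(k)$, where $m(k)$ is the unique $m$ with $Cr^{m-1}<k\leq Cr^m$; this makes $m(k)$ non-decreasing in $k$, allocates roughly $C(r-1)r^{m-1}$ cubes to each resolution $m$, and, since each resolution then accounts for a fixed amount of $\gamma$-dimensional content $\approx\tau^\gamma(r-1)/(Nr)$, guarantees that after finitely many resolutions $M$ the chosen pieces exhaust $\Gamma$. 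This determines $q$ and the side lengths \eqref{fraise}; positioning each $\Gamma_k$ so as to contain its piece then yields a genuine covering of $\Gamma$.

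The heart of the argument, and the step I expect to be the main obstacle, is arranging the order of the pieces so that \eqref{cerise} holds for every pair $j<l$. Given such a pair, I would locate the coarsest common ancestor cell $Q$ of the pieces tagged by $\lambda_j$ and $\lambda_l$, say at resolution $m^*$; as both cubes and both pieces lie in $Q$, one gets $\|\lambda-\mu\|\lesssim\rho r^{-m^*/\gamma}$, and comparison with the target $D(\frac{l-j}{l})^{1/\gamma}$ reduces everything to the combinatorial estimate $r^{-m^*}\lesssim r^{3}\,\frac{l-j}{l}$, which is precisely where the slack in the hypothesis $D\geq\rho r^{3/\gamma}$ is spent. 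Establishing this inequality amounts to showing that two pieces lying in different resolution-$m^*$ cells must be separated by at least $\sim l\,r^{-m^*}$ indices, so that a coarse separation in the tree forces a proportionally large separation in the ordering. A plain lexicographic sweep fails this test: the wrap-around from the all-$r$ descendant of one top-level cell to the all-one descendant of the next crosses a shallow boundary at an arbitrarily large index, and this is exactly the obstruction, anticipated in the introduction, that no naive ordering of $\RR^d$ resolves. The remedy is to interleave the traversal hierarchically so that a resolution-$m^*$ boundary is crossed only among the first $O(r^{m^*})$ indices, with property (iii) supplying, at each crossing, a point common to the two adjacent parent cells so that the spatial jump is controlled by the finer of the two scales involved. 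Combining these estimates with the observation that $D\geq\rho r^{3/\gamma}$ dominates $\diam(\Gamma)$ — which disposes automatically of the far-apart pairs, where $l-j$ is comparable to $l$ — would then yield \eqref{cerise} and complete the proof.
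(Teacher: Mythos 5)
Your first half --- matching cube sizes to resolutions (cube $k$ covers a resolution-$m(k)$ piece with $r^{m(k)}\approx k/C$) and the content count showing that finitely many resolutions exhaust $\Gamma$ --- is exactly the paper's allocation step. The gap is in the ordering step, and it is not that your combinatorial estimate is merely hard to establish: it is false for \emph{every} covering of the form \eqref{fraise}, so no traversal whatsoever can be built to satisfy it. By replacing $\|\lambda-\mu\|$ with the diameter $\rho r^{-m^*/\gamma}$ of the coarsest common ancestor, you commit yourself to proving that pieces lying in different children of a resolution-$m^*$ cell satisfy $l-j\gtrsim r^{-m^*-3}\,l$. Test this on $\Gamma=[0,1]^2$, $\gamma=2$, $r=4$ (dyadic cells, side of the first cube $\approx 2^{-s}$ where $4^{s}\approx N/\tau^{2}$): the first $T$ cubes have total area about $(\tau^{2}/N)\ln T$, so all generation-$(s+1)$ cells except one can be completely covered only once $\ln T\gtrsim N/\tau^{2}\approx 4^{s}$. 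Consequently, for every $T$ below $e^{c4^{s}}$ the pieces of index at least $T$ still meet two distinct cells of that generation, and hence there exist consecutive indices $j$ and $l=j+1\geq T$ whose pieces lie in different cells. For such a pair $m^*\leq s$, so your estimate would force $l\leq r^{m^*+3}\leq 4^{s+3}$, contradicting $l\geq e^{c4^{s}}$ once $N$ (hence $s$) is large --- and the lemma must hold for all $N$. The same area count kills your remedy: a traversal crossing resolution-$m$ boundaries only among the first $O(r^{m})$ indices would force all but one resolution-$m$ cell to be entirely covered by those $O(r^{m})$ cubes, which is impossible. Late crossings of coarse boundaries are unavoidable; they are also harmless, because by the chaining property \ref{hbd:iii} the two pieces at such a crossing touch, so their actual distance is at the fine scale --- it is only your common-ancestor upper bound that is grossly off for these pairs, and that is precisely the information your reduction throws away.

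The fix is not a cleverer traversal but a different reduction, and this is the substance of the paper's proof: keep the single chained lexicographic sweep with cube sizes shrinking along it, and convert \emph{actual distance}, not ancestor depth, into index separation. The paper's counting lemma (Lemma \ref{dSlemma:general}) shows that if two same-resolution tags satisfy $\|\lambda_{\bf l}-\lambda_{\bf j}\|\geq c^{m}\rho/c^{n}$, then the enumeration from ${\bf j}$ to ${\bf l}$ must traverse in full at least one subdivision part containing about $r^{n-1}$ indices; being a statement about distance, it is vacuous for touching pieces across a coarse boundary and strong exactly when the pair is genuinely separated. Combined with the bookkeeping invariant that the subdivision parts traversed near index $l$ have fineness $r^{k_l}$ with $l\leq r^{k_l+1}$ (fineness grows in proportion to the index), a separation at distance $\geq c^{s+1}\rho$ yields $l-j\geq r^{k_l-2}\geq l/r^{3}$, intermediate scales being handled by the same lemma; this factor $r^{3}$ is exactly where the hypothesis $D\geq\rho r^{3/\gamma}$ is spent. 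Repairing your argument means replacing the ancestor-based reduction by this distance-based counting, which amounts to reproducing the paper's proof.
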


With this covering result at hand, the proof of our main theorem goes as follows.

\begin{proof}[Proof of Theorem \ref{cs-gamma-dim}.] \hypertarget{proof-cs}{}
For simplicity, we make the proof for $d=2$ (the adaptation to the general case is straightforward), so we can write tags in the form $\lambda_j=(x_j, y_j)$. Since all covering parts in the family of coverings from the definition satisfy the same covering properties themselves (for the same $r$ and smaller $\rho$'s), we can decompose $\Lambda=\bigcup_s K_s$ in a (finite) union of small compact subsets $K_s$ all having HBD$^\circ$ at most $\gamma$ for the same $r\geq 2$ as $\Lambda$ and for $\rho>0$ small so that $D\geq \rho r^{3/\gamma}$. Let $(V_t)_t$ be a countable basis of open sets for $X$ and define \[A(s,t)=\{u\in X : \forall \lambda\in K_s, \exists m\in\NN\text{ s.t. }T_{m,\lambda}u\in V_t\}.\] Notice that $\bigcap_{s,t} A(s,t)$ is the set of universal vectors of $\{T_{n,\lambda} : n\in\NN, \lambda\in\Lambda\}$. Also notice that each $A(s,t)$ is open. We shall prove that, in addition, $A(s,t)$ is dense. Let $U$ be an arbitrary non-empty open subset of $X$ and let us fix $u_0\in U\cap \D$ and $v_t\in V_t\cap \D$. We take $\eta>0$ small so that $B_{\|\cdot\|}(u_0;\eta)\subset U$ and $B_{\|\cdot\|}(v_t;3\eta)\subset V_t$. Let $\kappa\in\NN$ and $(c_k)_k$ such that property \ref{CS1} is valid for both $u_0$ and $v_t$, and let $\tau>0$ such that property \ref{CS2} holds for $v_t$. We take $N\geq \kappa$ big enough so that $\sum_{k\geq N}c_k<\eta$ and we define $n_i=iN$, for all $i\in\NN$. We apply Lemma \ref{key:lemma} for these values of $\tau$, $N$ and $D$ in order to obtain $q\in\NN$ and a tagged covering $(\lambda_i,\Gamma_i)_{i=1,...,q}$ of $K_s$ satisfying \eqref{fraise} and \eqref{cerise}. In particular, 
\[
\Gamma_i =\biggl[x_i, x_i+\frac{\tau}{n_i^{1/\gamma}}\biggr]\times\biggl[y_i, y_i+\frac{\tau}{n_i^{1/\gamma}}\biggr], \quad 
i=1,\dots,q.
\]
Hence, for every $i=1,\dots,q$, 
\begin{equation}\label{eq:prox:part}
    \lambda\in\Gamma_i\implies \|\lambda-\lambda_i\|\leq \frac{\tau}{n_i^{1/\gamma}}.
\end{equation}
Moreover, from property \eqref{cerise} and the definition of $(n_i)_i$ we get that, for all $1\leq j<l\leq q$ and all $\lambda\in \Gamma_j$, $\mu\in\Gamma_l$,
\begin{equation}\label{eq:prox:tag}
    \|\lambda-\mu\|\leq D\Big(\frac{l-j}{l}\Big)^{1/\gamma} = D\Big(\frac{n_l-n_j}{n_l}\Big)^{1/\gamma}.
\end{equation}
Considering $u=u_0+\sum_{i=1}^qS_{n_i,\lambda_i}v_t,$ we show the density of $A(s,t)$ by verifying that $u\in A(s,t)\cap U$. For each $i\in\NN$, we use \eqref{eq:prox:tag} in order to apply \ref{CS1} with $k=n_i$ and $n=0$ (notice that $D>\diam(K_s)=\sup_{\lambda,\mu \in K_s}\|\lambda-\mu\|$)
and get
\begin{equation}\label{proof:prop:0}
    \|T_{n_i,\lambda}u_0\|\leq c_{n_i}=c_{iN}\quad\text{and}\quad \|S_{n_i,\lambda}v_t\|\leq c_{n_i}=c_{iN}, \quad\forall\lambda\in K_s.
\end{equation}
We then have
\[\|u-u_0\|\leq \sum_{i=1}^q\|S_{n_i,\lambda_i}v_t\|\leq \sum_{i=1}^q c_{iN}\leq \sum_{k\geq N}^{+\infty} c_k < \eta \implies u\in U.\]
With the purpose of showing that $u\in A(s,t)$, let $\lambda\in K_s$. Then there exists $i\in\{1,\dots,q\}$ such that $\lambda\in\Gamma_i$. We then choose $m=n_i$. For each $j=i+1,\dots,q$, we use \eqref{eq:prox:tag} and apply \ref{CS1} with $k=n_j-n_i$ and $n=n_i$ in order to get 
\begin{equation}\label{eq:i<j}
    \|T_{n_i,\lambda}\circ S_{n_j,\lambda_j}v_t\|\leq c_{n_j-n_i}=c_{(j-i)N}.
\end{equation}
Similarly, for $j=1,\dots, i-1$ we apply \ref{CS1} with $k=n_i-n_j$ and $n=n_j$ and we find
\begin{equation}\label{eq:i>j}
    \|T_{n_i,\lambda}\circ S_{n_j,\lambda_j}v_t\|\leq c_{n_i-n_j} = c_{(i-j)N}. 
\end{equation}
Finally, since $\lambda\in\Gamma_i$, from \eqref{eq:prox:part} and \ref{CS2} we get
\begin{equation}\label{eq:i=j}
    \|T_{n_i,\lambda}\circ S_{n_i,\lambda_i}v_t-v_t\|<\eta.
\end{equation}
Therefore, we use \eqref{proof:prop:0}, \eqref{eq:i<j}, \eqref{eq:i>j} and \eqref{eq:i=j} and we get
\begin{align*}
    \|T_{m,\lambda}u-v_t\| 
    &= \|T_{n_{i},\lambda}u_0\|+\sum_{j\neq i}\|T_{n_{i},\lambda}\circ S_{n_j,\lambda_j}v_t\|+\|T_{n_{i},\lambda}\circ S_{n_{i}}v_t-v_t\| \\
    &\leq c_{iN}+ \sum_{j=1}^{i-1}c_{(i-j)N}+\sum_{j=i+1}^qc_{(j-i)N}+\eta \\
    &\leq 2\sum_{j\geq N}c_j +\eta < 3\eta,
\end{align*}
what gives $T_{m,\lambda}u\in V_t$, that is, $u\in A(s,t)$. This concludes the proof.
\end{proof}

Just as in \cite[Theorem 3.3]{BMHowTo}, we can substitute \ref{CS2} in Theorem \ref{cs-gamma-dim} by condition \ref{CS2p} below, which is the form where it is generally applied. 
\begin{enumerate}[start=2,label={\upshape(CS\arabic*')},wide = 0pt, leftmargin = 3em]
\item There is $C>0$ such that, for all $n\in\NN$ and all $\lambda, \mu\in K$, \label{CS2p} \[ \left\|T_{n,\lambda}\circ S_{n,\mu} u-u\right\|\leq Cn^{1/\gamma}\|\lambda-\mu\|.\]
\end{enumerate}
The implication \ref{CS2p}$\implies$\ref{CS2} is immediate.

The proof of Lemma \ref{key:lemma}, that is, the construction of this special covering with the desired properties, is postponed to Section \ref{sec:lemma:proof}.
\section{Some fractals and their homogeneously ordered box dimensions} \label{sec:some-fractals}

In this section we calculate explicitly the HBD$^\circ$ of some popular fractals. This will be important for us to obtain (sometimes optimal) applications, as we are going to discuss in Section \ref{sec:app}. We begin with the Sierpi\'nski gasket because it is an iconic self-similar fractal that has only 3 similarities, what makes the discussion simpler and shorter.

\subsection{Sierpi\'nski gasket}

First defined in 1915 by the Polish mathematician Wacław Sierpiński \cite{sierp1915} as the limit of curves (the so-called Sierpi\'ski arrowhead curve), the Sierpi\'nski gasket (SG for short) is a fractal obtained by the successive removal of inverted equilateral triangles inside equilateral triangles. The SG is a self-similar fractal with $r=3$ similarities and uniform contraction ratio $c=1/2$. Hence, it has similarity dimension $\gamma=-\frac{\log r}{\log c}=\frac{\log3}{\log2}$, which coincides with its Hausdorff dimension since the SG satisfies the OSC. One can easily define a refining family of coverings and find that $\gamma$ is also its homogeneous box dimension. The fact that its homogeneously ordered box dimension also equals $\gamma$ can be seen by defining a conveniently ordered IFS of which the SG is the attractor. Let us do just that.

\begin{figure}
    \centering
    \includegraphics[width=\textwidth]{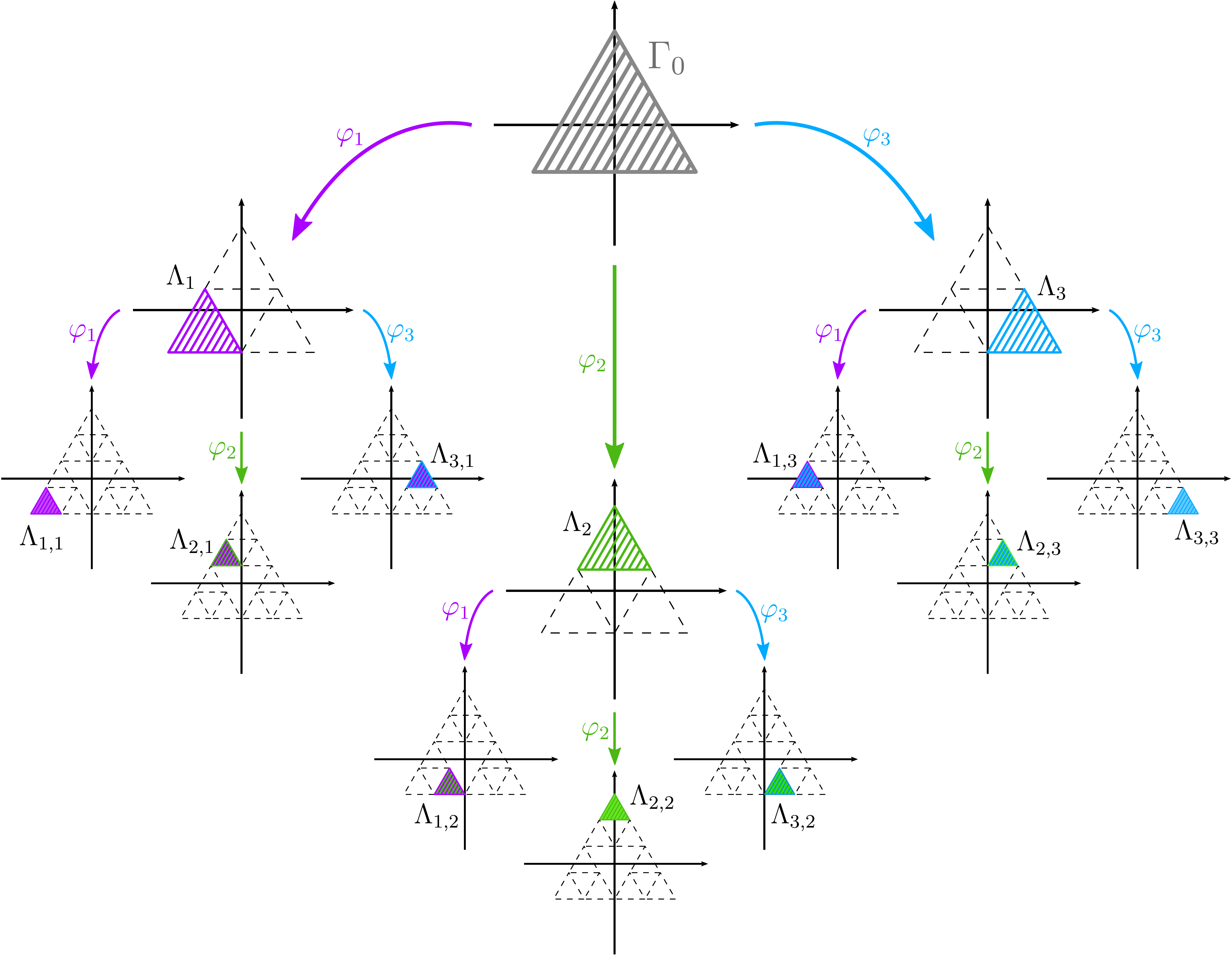}
    \caption{IFS generating the homogeneously ordered family of coverings for the Sierpi\'nski gasket}
    \label{fig:ifs-sg}
\end{figure}

Let $\Gamma\subset \RR^2=\CC$ be the SG of side $\ell$ and barycentre at the origin of the complex plane. We define, for each $z,w\in \CC$, $c\in(0,1)$ and $\theta\in(-\pi,\pi]$,
\begin{alignat*}{2}
    \I_v(z)&=\Bar{z} &&  \quad \text{vertical reflection},\\
    \I_h(z)&=-\Bar{z} &&  \quad \text{horizontal reflection},\\
    \T_{w}(z)&=z+w &&  \quad \text{translation by }w,\\
    \C_c(z)&=cz &&  \quad \text{contraction with ratio }c,\\
    \R_{\theta}(z)&=ze^{i\theta} &&  \quad \text{rotation by }\theta.
\end{alignat*}
The fundamental similarities $\vphi_k:\CC\to\CC, k=1,2,3,$ defining $\Gamma$ are
\begin{gather*}
    \vphi_1=\T_{w_1}\circ \C_{\frac{1}{2}}\circ \R_{\frac{\pi}{3}}\circ \I_{v},\quad
    \vphi_2=\T_{w_2}\circ\C_{\frac{1}{2}}\\
    \text{and}\quad\vphi_3=\T_{w_3}\circ \C_{\frac{1}{2}}\circ \R_{\frac{-\pi}{3}}\circ \I_{v},
\end{gather*}
where $w_1=\big(\frac{-1}{4}-i\frac{\sqrt{3}}{12}\big)\ell$, $w_2=i\frac{\sqrt{3}}{6}\ell$ and $w_3=\I_w(w_1)$ are the fundamental translations (barycenters of the three triangles in the first iteration). Explicitly, we have, for all $z\in \CC$,
\begin{gather*}
    \vphi_1(z)=\frac{1}{2}\Bar{z}e^{i\frac{\pi}{3}}-\frac{\ell}{4}-i\frac{\sqrt{3}}{12}\ell, \quad 
    \vphi_2(z)=\frac{1}{2}z+i\frac{\sqrt{3}}{6}\ell  \\ 
    \text{and}\quad\vphi_3(z)= \frac{1}{2}\Bar{z}e^{-i\frac{\pi}{3}}+\frac{\ell}{4}-i\frac{\sqrt{3}}{2}\ell.
\end{gather*}
Then $\Gamma$ is clearly the attractor of the IFS $\{\vphi_1,\vphi_2,\vphi_3\}.$ By taking $\Gamma_0$ as the equilateral triangle of side $\ell$ and barycentre at the origin, we have that the family of coverings $\big((\Lambda_{\bf i})_{{\bf i}\in I_3^m}\big)_{m\in\NN}$ defined by 
\[
\Lambda_{i_1,\dots i_m} = (\vphi_{i_1}\circ \cdots \circ \vphi_{i_m})(\Gamma_0), \quad \text{for all }m\in \NN,
\]
is homogeneously ordered (see Figure \ref{fig:ifs-sg} for the first 2 iterations of the construction). Therefore, one can see that $\Gamma$ has homogeneously ordered box dimension at most $\frac{\log3}{\log2}$. Indeed, it is plain that $(\Lambda_{\bf i})_{{\bf i}\in I_3^m}$ is a covering of $\Gamma$ for all $m\in \NN$. Furthermore, condition \ref{hbd:i} comes from the fact that $\vphi_1,\vphi_2$ and $\vphi_3$ are all contractions of ratio $c:=1/2$ (notice that $1/r^{1/\gamma}=c$), whereas conditions \ref{hbd:ii} and $\ref{hbd:iii}$ follows from the construction of this IFS.

\begin{figure}[H]
    \centering
    \includegraphics[width=0.80\textwidth]{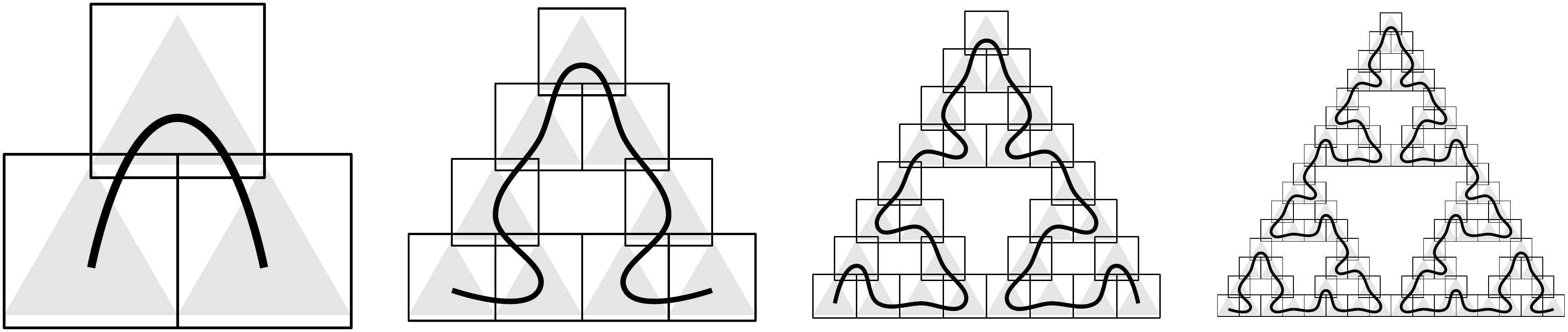}
    \caption{Homogeneous ordering of the family of coverings for the Sierpiński gasket}
    \label{fig:sierp-ord}
\end{figure}

Notice that the orderings of the coverings that we have obtained follow exactly the trace of the curves as in Figure \ref{fig:sierp-ord}. Thus, although less formal, we could simply say that the ordering of the coverings follows the pseudo-arrowhead curve of respective resolution.

\subsection{Pseudo-Hilbert curves filling a square}

Following the common sense on the word ``fractal'', squares would probably not be included in this category. However, squares can arguably be regarded as fractals once it is obtained through a fractal construction. Even the whole plane (or space) can be considered a fractal in the form of a space-filling curve. The \emph{Peano curve} is the very first manifestation of these kinds of curves. It was defined in 1890 by the Italian mathematician Giuseppe Peano \cite{peano1890} and is a self-similar fractal with 9 similarities. In 1891, the German mathematician David Hilbert \cite{hilbert1891} proposed an alternative and much simpler construction, the so called \emph{Hilbert curve}, which has only 4 similarities. In Figure \ref{fig:square-ordering} we see the first four iterations in the construction of the Hilbert space-filling curve (we refer to \cite[Chapter 2]{hans} for a more detailed discussion).

\begin{figure}[H]
    \centering
    \includegraphics[width=0.80\textwidth]{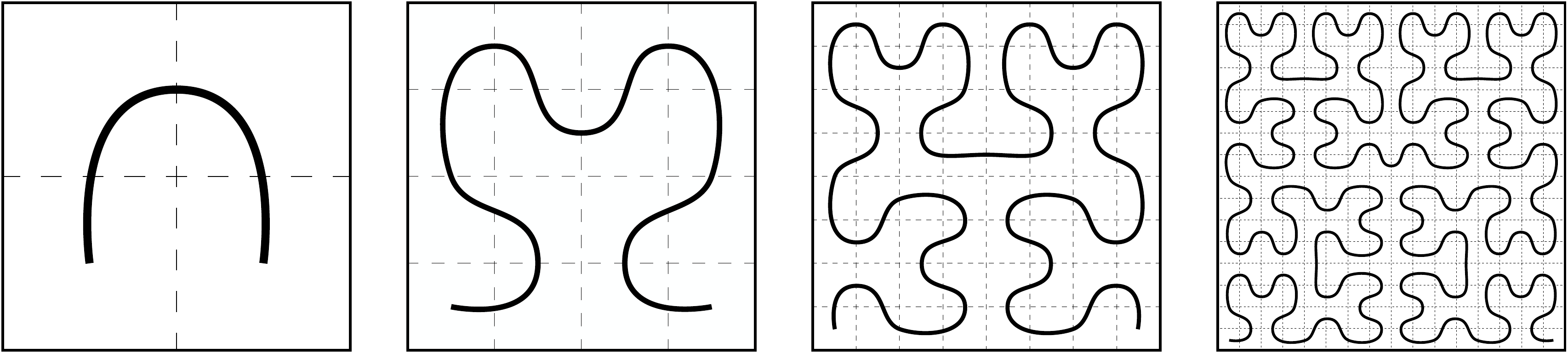}
    \caption{Homogeneous ordering of dyadic partitions on a square}
    \label{fig:square-ordering}
\end{figure}

Let us consider the unit square $\Lambda=[-\frac{1}{2},\frac{1}{2}]^2\subset \RR^2$. If we proceed like we did with the Sierpi\'nski gasket, we find $\Lambda$ as the attractor of the IFS $\{\vphi_1,\vphi_2,\vphi_3,\vphi_4\}$, where, for all $z\in\CC$, 
\begin{gather*}
    \vphi_1(z) = -\frac{1}{2}\Bar{z}e^{-i\frac{\pi}{2}}-\frac{1}{4}-\frac{1}{4}i,\quad \vphi_2(z)=\frac{1}{2}z-\frac{1}{4}+\frac{1}{4}i,\\
    \vphi_3(z)= \frac{1}{2}z+\frac{1}{4}+\frac{1}{4}i \quad \text{and} \quad \vphi_4(z)=-\frac{1}{2}\Bar{z}e^{i\frac{\pi}{2}}+\frac{1}{4}-\frac{1}{4}i.
\end{gather*}
We then notice that, just like in the case of the Sierpi\'nski gasket, the lexicographical ordering found in each resolution corresponds to the path followed by the respective pseudo-Hilbert curve. Therefore, it is more intuitive (albeit less formal) to simply consider, in each resolution $m$,  the $m$th dyadic partition on $\Lambda$ (that it, the subdivision of $\Lambda$ by $4^m$ squares of side $1/2^m$) ordered ``visually'' by the pseudo-Hilbert curve of order $m$ (like in Figure \ref{fig:square-ordering}).

We then immediately get that $\lambda$ has HBD$^\circ$ at most 2. Indeed, the fact that in each step we have a covering is trivial (it is the dyadic covering after all). Furthermore, each step in the construction contracts the previous one with a ration of $\frac{1}{2}$, thus giving \ref{hbd:i}. Homogeneity also comes from the construction of the dyadic covering and the positioning of the sub-squares, this gives \ref{hbd:ii}. Finally, ordering comes from the pseudo-Hilbert curves, so \ref{hbd:iii} follows and the verification is complete.

Notice that the calculation of the HBD of $\Lambda$ is much simpler, for the dyadic partition is a homogeneous covering (see \cite[Section 4.2]{BCQCommon}). The ordering requirement for the HBD$^\circ$ makes the calculation more delicate. Also notice that this construction can be done in any dimension $d\geq 2$. In fact, curves with the same properties as the Hilbert curve can be constructed in any dimension (there are even multiple ways of doing it). See \cite[Section 2.8]{hans} for one construction in dimension 3.

\subsection{Fractal curves generated by a seed}

A simple way of defining a fractal is to fix a line segment as the \emph{base}, which we identify with the \emph{resolution 0 shape} $R_0$, and a small shape in the form of an open polygonal path, which we will call the \emph{seed}. The construction, represented in Figure \ref{seed-3-iterations}, goes by the following iterative steps: (1) scale a copy of the seed so that the distance of its ends equal the length of the base; (2) position the scaled seed end-to-end with $R_0$ (after rotating if necessary); (3) define this duplicated and scaled seed as the \emph{resolution 1 shape} $R_1$; (4) repeat the same process with each line segment composing $R_1$ in order to obtain the \emph{resolution 2 shape} $R_2$; (5) repeat. The limit of $R_n$ as $n$ goes to $+\infty$ is a continuous fractal curve.

\begin{figure}[H]
    \centering
    \includegraphics[width=0.95\textwidth]{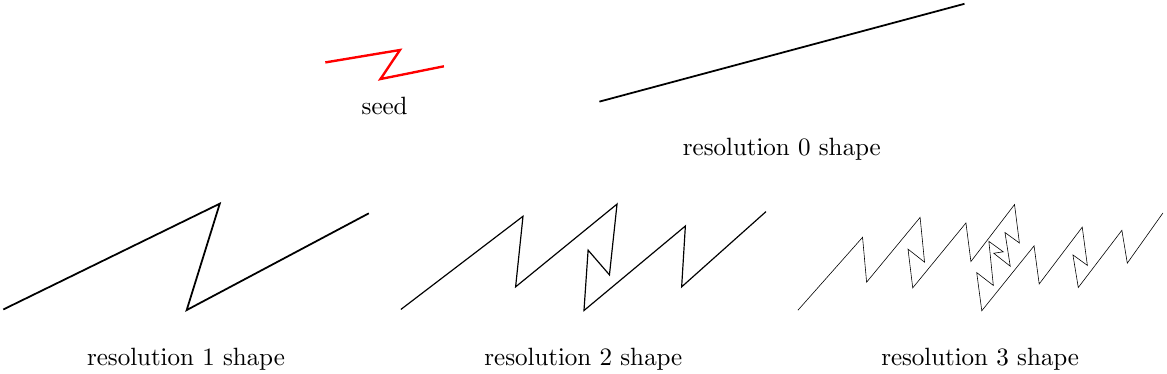}
    \caption{Seed generating a fractal, first 3 iterations}
    \label{seed-3-iterations}
\end{figure}

All the features of the fractal figure obtained through the algorithm above are determined by its seed. In fact, if the seed is an open polygonal path with $r$ edges, then the fractal has $r$ similarities. If each edge has the same length, then the fractal is uniformly contracting, and its contraction ratio eqsuals the length of the edges of the first scaled seed $R_1$ divided by the length of the initial line segment. In this case, the pseudo-curves of each resolution provides a homogeneous family of coverings for this fractal and we can easily conclude that the limit curve has HBD at most its similarity dimension $\gamma=-\frac{\log r}{\log c}$ (where $c$ is its uniform contraction ratio). Furthermore, the ordering can be defined following the pseudo-curve of respective resolution, thus the curve actually have HBD$^\circ$ at most $\gamma$. If, on top of that, the fractal satisfy the OSC, then $\gamma$ is in fact its Hausdorff dimension. We shall discuss optimal applications for such constructions in Section \ref{sec:app}.

Many interesting fractals can be obtained through this iterative process, although depending on the choice of the seed, the limit shape can be very messy. For instance, most seeds will not produce fractals satisfying the OSC. For a more detailed discussion, we refer to \cite[Chapter 3]{LauBook}.

\begin{figure}[H]
    \centering
    \includegraphics[width=0.95\textwidth]{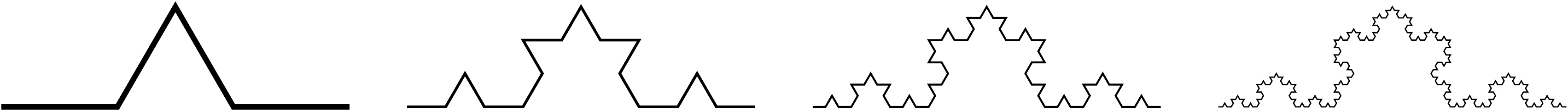}
    \caption{First 4 iterations in the construction of the Koch curve}
    \label{fig:koch:4}
\end{figure}

One example of fractal curve that can be constructed this way is the \emph{Von Koch curve}, which composes the famous \emph{Koch snowflake}. Defined in 1904 by the Swedish mathematician Helge von Koch \cite{koch1904}, it uses as seed the first shape in Figure \ref{fig:koch:4}, thus having 4 similarities. In the classical definition, all edges have length 1/3 of the base, and the angles of the inner part equal 60$^\circ$. Hence, the Koch curve has uniform contraction ratio 1/3. Therefore, it has HBD$^\circ$ at most $\frac{\log(4)}{\log(3)}$, which equals its Hausdorff dimension since the OSC is satisfied.

\begin{figure}[H]
    \centering
    \includegraphics[width=0.95\textwidth]{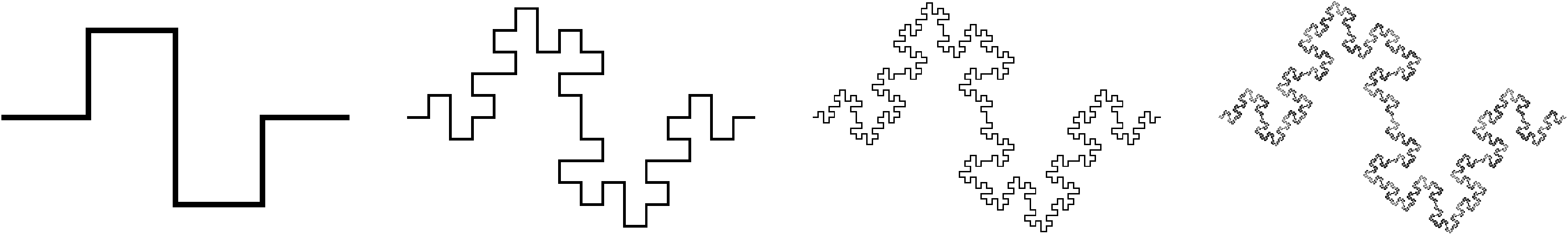}
    \caption{First 4 iterations in the construction of the Minkowski sausage}
    \label{fig:sausage-4}
\end{figure}

Another example of well known fractal curve obtained through the iterative process described above is the so-called \emph{Minkowski sausage} (defined in \cite[Page 33]{mandelbrot}). The first 4 iterations of its construction is represented in Figure \ref{fig:sausage-4}. It uses as seed an 8 edges polygonal path (although we see 7, the longer one in the middle is considered double), hence this fractal has 8 similarities. Each edge of the scaled seed has side 1/4 of the length of its base, thus it is uniformly contracting with ratio 1/4. Therefore, the Minkowski sausage has HBD$^\circ$ at most $\frac{\log(8)}{\log(4)}=\frac{3}{2}$. Once more, it satisfies the OSC, thus having Hausdorff dimension $\frac{3}{2}$ as well.

\subsection{Hölder curves} \label{subsec:holder}

As it is verified in \cite[Sextion 4.2]{BCQCommon}, the image of any $\beta$-Hölder curve, $\beta\in (0,1]$, has HBD at most $1/\beta$. Since we can order the family of coverings following the sense of the curve, we get that it also has HBD$^\circ$ at most $1/\beta$. We include here the details for the sake of completeness.

Let $d\geq 1$, $\beta\in(0,1]$, $\rho>0$, and $f:[0,1]\to \RR^d$ satisfying \[\|f(x)-f(y)\|\leq \rho|x-y|^\beta.\]
We define $c=2^{-\beta}$ and we write $\Gamma=f([0,1])$.

For each $m\geq1$, consider the dyadic intervals $I_{\bf i}$ defined, for ${\bf i}\in \{1,2\}^m$, as \[I_{\bf i}=\bigg[\sum_{k=1}^m\frac{i_{k}-1}{2^k}, \sum_{k=1}^m\frac{i_{k}-1}{2^k}+\frac{1}{2^m}\bigg].\] We define $\Lambda_{\bf i}=f(I_{\bf i})$ for all ${\bf i}\in \{1,2\}^m$. It follows that
\[\diam (\Lambda_i)\leq \rho\diam(I_{\bf i})^\beta=c^{m}\rho.\]
Hence, we can apply here the definition of HBD$^\circ$ to $\Gamma$ by taking $r=2$, $\gamma=1/\beta$ and using the family $\big((\Lambda_{\bf i})_{{\bf i}\in \{1,2\}^m}\big)_{m\geq 1}$ defined above. We conclude that $\Gamma$ has HBD$^\circ$ at most $\gamma$.
\section{Applications} \label{sec:app}

In this section, we provide some consequences and applications of Theorem \ref{cs-gamma-dim}. As we shall discuss, whenever a parameter set $\Lambda$ has HBD$^\circ$ at most $\dim_\H(\Lambda)$, we get optimal applications, that is, we include the previously uncovered limit case (as discussed in \cite{BCQCommon}).

Weighted forward and backward shifts acting on sequence spaces are among the most studied operators in Linear Dynamics. We say that $X$ is a \emph{sequence space} when it is a subspace of $\omega=\CC^{\NN_0}$, thus a \emph{Fréchet sequence space} is a sequence space with the structure of Fréchet space. Given a sequence $w=(w_n)_{n\in\NN}\in \RR_+^\NN$ of positive scalars, the \emph{weighted backward shift induced by $w$} is the operator defined by \[B_w(x_0,x_1,\dots)=(w_1x_1, w_2,x_2,\dots),\]
whereas the \emph{weighted forward shift induced by $w$} is defined by \[F_w(x_0,x_1,\dots)=(0, w_1x_0,w_2,x_1,\dots).\]

Throughout this section, $X$ is a Fréchet sequence space endowed with a family of seminorms $\big(\|\cdot\|_p\big)_{p\in\NN}$ such that the $F$-norm $\|\cdot\|$ defined in $X$ is given by
\[\|x\|=\sum_{p=1}^{+\infty}\frac{1}{2^p}\min(1, \|x\|_p), \ \forall x\in X.\]
We also assume that the space $c_{00}=\Span(e_i)_{i\in\NN_0}$ of sequences with finite support is dense in $X$. Notice that $\|\cdot\|$ is not necessarily homogeneous (see \cite{Rolewicz:book} for more details on $F$-spaces), but we are still allowed to use the inequality \[\|x\cdot u\|\leq (|x|+1)\|u\|, \ \forall u\in X, \ \forall x\in\CC.\]

We consider $I\subset \RR_+$ a compact interval and $\big(w(a)\big)_{a\in I}$ a family of positive weights such that the induced family of weighted backward shifts $(B_{w(a)})_{a\in I}$ acting on $X$ is continuous. Let us fix $d\in\NN$ and equip $X^d$ with the $F$-norm 
\[
\|u\|_\infty=\max\{\|u(1)\|,\dots,\|u(d)\|\}, \ \forall u=(u(1),\dots,u(d))\in X^d.
\]
For each $\lambda=(\lambda(1),\dots,\lambda(d))\in I^d$, we define \[T_{\lambda}:= \bigoplus_{j=1}^d B_{w(\lambda(j))}\quad\text{and}\quad S_\lambda := \bigoplus_{j=1}^d F_{w(\lambda(j))^{-1}},\]
where $F_{w(\lambda_k)^{-1}}$ denotes the weighted forward shift induced by the weights $\big(\frac{1}{w_n(\lambda_k)}\big)_{n\in\NN}$. Thus, we immediately have that $\D:=c_{00}^d$ is dense in $X^d$, the map $T_\lambda$ is an operator on $X^d$ and $T_\lambda\circ S_{\lambda}=Id$, for all $\lambda\in I^d$.
\begin{theorem}\label{app:main}
     Let $\Lambda\subset I^d$ be a compact set with HBD$^\circ$ at most $\gamma>0$ and let $\alpha\in(0,1/\gamma]$. Suppose that there are $C_0, C_1>0$ such that the maps $f_n:I\to \RR$ given by \[f_n(x)=\sum_{k=1}^n\log(w_k(x)), \quad x\in[a,b],\]
for all $n\in\NN$, are $C_0n^\alpha$-Lipschitz and 
     \begin{equation}
         \label{cond:app:F}
         \sum_{k=0}^{+\infty}\bigg\|\frac{\exp(C_1k^\alpha)}{w_{l+1}(x)\cdots w_{l+k}(x)}e_{l+k}\bigg\|<+\infty,\quad \forall l\in\NN_0, \forall x\in I.
\end{equation}
    Then $(T_\lambda)_{\lambda\in\Lambda}$ has a dense $G_\delta$ set of common hypercyclic vectors.
\end{theorem}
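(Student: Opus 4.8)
The plan is to apply the Generalized CS-Criterion (Theorem~\ref{cs-gamma-dim}) to the iterates $T_{n,\lambda}=T_\lambda^{\,n}$ and $S_{n,\lambda}=S_\lambda^{\,n}$ on the dense set $\D=c_{00}(\NN)^d$, for which $T_{n,\lambda}\circ S_{n,\lambda}=Id$ and the continuity of $\{T_{n,\lambda}\}$ both follow from the continuity of $(B_{w(a)})_{a\in I}$. Since the left-hand sides of \ref{CS1} and \ref{CS2} are subadditive in $u$, it suffices to verify the conditions for $u=e_\ell$ placed in a single coordinate $j$. A direct shift computation gives, in that coordinate, $T_{n+k,\lambda}\circ S_{n,\mu}\,e_\ell=c'_{n,k}\,e_{\ell-k}$ and $T_{n,\lambda}\circ S_{n+k,\mu}\,e_\ell=c_{n,k}\,e_{\ell+k}$, where $c'_{n,k},c_{n,k}$ are explicit ratios of products of weights. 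The first vector vanishes once $k>\ell$, so taking $\kappa>\ell$ makes the first inequality of \ref{CS1} trivial, and the whole difficulty is the bound $\|T_{n,\lambda}\circ S_{n+k,\mu}\,e_\ell\|\le c_k$.

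To obtain it I would write $c_{n,k}=\bigl(\textstyle\prod_{\ell<i\le \ell+k}w_i(\mu(j))\bigr)^{-1}\exp(\Delta)$, where $\Delta$ is a difference of values of the $f_N$'s at $\lambda(j)$ and $\mu(j)$ over indices $N\le \ell+n+k$. The $C_0N^\alpha$-Lipschitz hypothesis together with the proximity $\|\lambda-\mu\|\le D\,k^{1/\gamma}/(n+k)^{1/\gamma}$ controls $\Delta$; the decisive elementary inequality is $(\ell+n+k)^\alpha\,k^{1/\gamma}/(n+k)^{1/\gamma}\le 2^\alpha k^\alpha$ for $k>\ell$, which uses exactly $\alpha\le 1/\gamma$ and $n+k\ge k$, turning $\Delta$ into a term of size $\le C_0\,2^{\alpha+1}D\,k^\alpha$ in the exponent. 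The remaining factor $\bigl(\prod_{\ell<i\le\ell+k}w_i(\mu(j))\bigr)^{-1}\|e_{\ell+k}\|$ is handled by the summability hypothesis \eqref{cond:app:F}, which supplies a decay $\exp(-C_1k^\alpha)$ against a summable sequence. Collecting everything produces a bound $d_k\exp\bigl((C_0\,2^{\alpha+1}D-C_1)k^\alpha\bigr)$ with $(d_k)$ summable, so that choosing $D$ with $C_0\,2^{\alpha+1}D\le C_1$ gives the required summable $(c_k)$.

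The main obstacle is uniformity in $\mu$: the factor $\bigl(\prod_{\ell<i\le\ell+k}w_i(\mu(j))\bigr)^{-1}$ must be dominated independently of $\mu$, whereas \eqref{cond:app:F} is only pointwise. I would fix a reference point $x_\ast$ in the projection of $K$ and use the Lipschitz bound to replace $\mu(j)$ by $x_\ast$, at the price of an extra factor $\exp\bigl(C_0(2^\alpha+1)\diam(K)\,k^\alpha\bigr)$. The estimate therefore closes only when the total exponent $\bigl(C_0[(2^\alpha+1)\diam(K)+2^{\alpha+1}D]-C_1\bigr)k^\alpha$ is nonpositive, i.e. when $\diam(K)$ and $D$ are small. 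This costs nothing: since $\Lambda$ has HBD$^\circ$ at most $\gamma$, I can cover it by finitely many pieces $K_s$ of arbitrarily small diameter, each still of HBD$^\circ$ at most $\gamma$; applying Theorem~\ref{cs-gamma-dim} to each $K_s$ (with $D$ small) yields a dense $G_\delta$ set of common hypercyclic vectors for $(T_\lambda)_{\lambda\in K_s}$, and a finite Baire intersection returns the conclusion for $\Lambda$.

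Condition \ref{CS2} is comparatively harmless: the same computation gives $T_{n,\lambda}\circ S_{n,\mu}\,e_\ell=e^{g}e_\ell$ with $g=[f_{\ell+n}(\lambda(j))-f_{\ell+n}(\mu(j))]-[f_\ell(\lambda(j))-f_\ell(\mu(j))]$, and under $\|\lambda-\mu\|\le \tau/n^{1/\gamma}$ the Lipschitz hypothesis and $\alpha\le1/\gamma$ bound $|g|$ by a constant multiple of $\tau$, uniformly in $n$; hence $e^{g}\to1$ as $\tau\to0$, and since $u$ is a fixed finite combination of basis vectors, $\|T_{n,\lambda}\circ S_{n,\mu}u-u\|\le\eta$ for $\tau$ small. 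I expect the uniform control in \ref{CS1} to be the only genuinely delicate point; the rest reduces to the two shift computations combined with the Lipschitz and summability hypotheses.
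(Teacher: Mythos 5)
Your proposal is correct and follows essentially the same route as the paper: apply Theorem \ref{cs-gamma-dim} with $T_{n,\lambda}=T_\lambda^{\,n}$, $S_{n,\lambda}=S_\lambda^{\,n}$ on $\D=c_{00}(\NN)^d$, kill the first half of \ref{CS1} by taking $\kappa$ beyond the support of $u$, factor the coefficient of $T_\lambda^{\,n}S_\mu^{\,n+k}u$ into a ratio of weight products (controlled by the $C_0n^\alpha$-Lipschitz hypothesis together with the proximity constraint, using exactly $\alpha\le 1/\gamma$ and $k\le n+k$) times an inverse product of weights (controlled by \eqref{cond:app:F}), and choose $D$ of order $C_1/C_0$ so that the exponential factors cancel; \ref{CS2} is the same short computation in both treatments (the paper leaves it to the reader, you spell it out).

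The one genuine divergence is the point you yourself flagged as delicate: uniformity in the parameter of the inverse-product factor. The paper absorbs it by putting $\sup_{x\in[a,b]}$ inside the definition of $c_k$ and choosing $D\le C_1/(2C_0)$ so that $\exp(2C_0Dk^\alpha)\le\exp(C_1k^\alpha)$; the summability of that supremum is then left implicit, and it does not formally follow from \eqref{cond:app:F}, which is pointwise in $x$. Your repair---compare $w_i(\mu(j))$ to $w_i(x_\ast)$ at a fixed reference point via the Lipschitz hypothesis, pay an extra factor $\exp\bigl(C_0(2^\alpha+1)\diam(K)\,k^\alpha\bigr)$, and shrink the parameter pieces (legitimate, since portions of $\Lambda$ inside covering parts retain HBD$^\circ$ at most $\gamma$ and a finite Baire intersection recovers $\Lambda$; the proof of Theorem \ref{cs-gamma-dim} performs the same localization anyway)---closes exactly this point rigorously, at the mild cost of needing $C_0\bigl[2^{\alpha+1}D+(2^\alpha+1)\diam(K)\bigr]\le C_1$ in place of the paper's cleaner $D\le C_1/(2C_0)$. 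So your argument is, if anything, more careful than the paper's on this step; everything else coincides.
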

\begin{proof}
    Let us start by writing $I=[a,b]$, for some $a\leq b$ in $\RR_+$, and choosing some $D\in \big(0, \frac{C_1}{2C_0}\big]$. Given $u\in \D$, there is $L\in\NN_0$ such that we can write $u=(u(1),\dots,u(d))$ in the form $u(j)=\sum_{l=0}^Lu_l(j)e_l$, for all $j=1,\dots,d$. We fix some large $\kappa\in\NN$ (precise conditions will be given during the proof) and we define 
    \[
    c_k:=(L+1)(\|u\|_\infty + 1)\sup_{x\in[a,b]}\max_{l=0,\dots, L} \bigg\|\frac{\exp(C_1k^\alpha)}{w_{l+1}(x)\cdots w_{l+k}(x)}e_{l+k}\bigg\|.
    \]
    Given $\lambda,\mu\in\Lambda$ such that $\|\lambda-\mu\|\leq D\frac{k^{1/\gamma}}{(n+k)^{1/\gamma}}$ with $n\geq 0$ and $k\geq \kappa$, we have $T_\lambda^{n+k}\circ S_\mu^{n}u=0$, for $\kappa>L$, and
    \begin{align*}
        \|T_\lambda^{n} S_{\mu}^{n+k}u\|_\infty \leq \max_{j=1,\dots,d} \|B_{w(\lambda(j))}^n F_{w(\mu(j))^{-1}}^{n+k}u(j)\|
        = \max_{j=1,\dots,d}\bigg\|\sum_{l=0}^Lu_l(j)F_1F_2e_{l+k}\bigg\|,
    \end{align*}
where 
\[
F_1 = \frac{w_{l+1}(\lambda(j))\cdots w_{l+n+k}(\lambda(j))}{w_{l+1}(\mu(j))\cdots w_{l+n+k}(\mu(j))} 
\quad\text{and}\quad 
F_2=\frac{1}{w_{l+1}(\lambda(j))\cdots w_{l+k}(\lambda(j))}.
\]
Since $f_n$ is $C_0n^\alpha$-Lipschitz, we obtain 
\begin{align*}
    F_1 &= \frac{w_{1}(\lambda(i))\cdots w_{l+n+k}(\lambda(i))}{w_{1}(\mu(i))\cdots w_{l+n+k}(\mu(i))}\times \frac{w_1(\mu(i))\cdots w_l(\mu(i))}{w_1(\lambda(i))\cdots w_l(\lambda(i))}\\
    &\leq\exp(C_0((l+n+k)^{\alpha}+l^\alpha)\|\lambda-\mu\|)\\
    &\leq \exp(2C_0(n+k)^{\alpha}\|\lambda-\mu\|)\\
    &\leq \exp(2C_0Dk^\alpha),
\end{align*}
where the penultimate inequality holds if $\kappa$ (hence $k$) is big enough and the last inequality follows from the hypothesis $\|\lambda-\mu\|\leq D\frac{k^{1/\gamma}}{(n+k)^{1/\gamma}}\leq D\frac{k^{\alpha}}{(n+k)^{\alpha}}$. Altogether, we get
\begin{align*}
\|T_\lambda^{n}S_{\mu}^{n+k}u\|_\infty
    &\leq \max_{j=1,\dots,d}\sum_{l=0}^L\big\|u_l(j)F_1F_2e_{l+k}\big\| \\
    &\leq \sup_{x\in[a,b]}\max_{l=0,\dots, L}(L+1)(\|u\|_\infty+1)\bigg\|\frac{\exp(2C_0Dk^\alpha)}{w_{l+1}(x)\cdots w_{l+k}(x)}e_{l+k}\bigg\|\\
    &\leq c_k.
\end{align*}
This verifies condition \ref{CS1}. Condition \ref{CS2} follows the fact that $f_n$ is $C_0n^\alpha$-Lipschitz. Details are left to the reader.
\end{proof}

\noindent {\bf Observation.} If $X$ is a Banach space, we can substitute \eqref{cond:app:F} by 
\begin{equation}\label{cond:simp:app:F}
\sum_{k=0}^{+\infty}\bigg\|\frac{\exp(C_1k^\alpha)}{w_{1}(x)\cdots w_{k}(x)}e_{k}\bigg\|<+\infty, \quad \forall x\in I.
\end{equation}
Indeed, given $l\in\NN_0$ and $x\in I$, the conclusion follows easily form the continuity of $F_{w(x)^{-1}}^l$ and the fact that \[\frac{\exp(C_1k^\alpha)}{w_{l+1}(x)\cdots w_{l+k}(x)}e_{l+k}=w_1(x)\cdots w_l(x)F_{w(x)^{-1}}^l\bigg(\frac{\exp(C_1k^\alpha)}{w_{1}(x)\cdots w_{k}(x)}e_{k}\bigg).\]

As a consequence (and in the same vein as \cite[Corollary 4.2]{BCQCommon}), we can state the following practical corollary.

\begin{corollary} \label{corol:app}
    Let $X=\ell_p$, $p\in[1,+\infty)$, or $X=c_0$ and let $\Lambda\subset I^d$ be compact set with HBD$^\circ$ at most $\gamma>0$. Assume that there are $\alpha\in (0,1/\gamma]$, $N\in\NN$ and $C_0, C_1, C_2>0$ such that, for all $n\geq N$, the function $x\in I\mapsto \sum_{j=1}^n \log\big(w_j(x)\big)$ is $C_0n^\alpha$-Lipschitz and $\inf_{x\in I}w_1(x)\cdots w_n(x)\geq C_1\exp(C_2n^\alpha)$. Then $(T_\lambda)_{\lambda\in\Lambda}$ has a dense $G_\delta$ set of common hypercyclic vectors.
\end{corollary}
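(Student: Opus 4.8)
The plan is to derive Corollary \ref{corol:app} directly from Theorem \ref{app:main} by verifying its two hypotheses. The Lipschitz condition on $f_n(x)=\sum_{j=1}^n\log(w_j(x))$ is assumed verbatim in the corollary, with the same constant $C_0$ and the same exponent $\alpha\in(0,1/\gamma]$ (imposed for $n\geq N$; the finitely many $n<N$ only affect a bounded Lipschitz constant and cost nothing). Hence the only genuine work is to produce a constant playing the role of the summability exponent in \eqref{cond:app:F}.

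Since $X=\ell_p(\NN)$ or $c_0(\NN)$ is a Banach space, I would invoke the Observation following Theorem \ref{app:main} and check the simplified condition \eqref{cond:simp:app:F} instead of \eqref{cond:app:F}. In these spaces $\|e_k\|=1$, so the $k$-th summand of \eqref{cond:simp:app:F} equals $\exp(\widetilde{C}_1 k^\alpha)/(w_1(x)\cdots w_k(x))$, where I write $\widetilde{C}_1$ for the free constant of Theorem \ref{app:main} in order to distinguish it from the corollary's multiplicative constant $C_1$. The key step is the choice $\widetilde{C}_1:=C_2/2$, where $C_2$ is the stretched-exponential growth rate from the corollary. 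Then the lower bound $\inf_{x\in I}w_1(x)\cdots w_k(x)\geq C_1\exp(C_2 k^\alpha)$, valid for $k\geq N$, yields uniformly in $x\in I$
\[
\frac{\exp(\widetilde{C}_1 k^\alpha)}{w_1(x)\cdots w_k(x)}\leq \frac{1}{C_1}\exp\Big(-\tfrac{C_2}{2}k^\alpha\Big),
\]
and since $\alpha>0$ the stretched-exponential tail $\sum_k\exp(-\tfrac{C_2}{2}k^\alpha)$ converges (eventually $\tfrac{C_2}{2}k^\alpha\geq 2\log k$, so the terms are dominated by $k^{-2}$). Absorbing the finitely many terms with $k<N$ into an additive constant, \eqref{cond:simp:app:F} follows for every $x\in I$, and hence so does \eqref{cond:app:F} through the Observation.

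With both hypotheses of Theorem \ref{app:main} verified, that theorem applies and gives that $(T_\lambda)_{\lambda\in\Lambda}$ admits a dense $G_\delta$ set of common hypercyclic vectors. The continuity of the family $(B_{w(a)})_{a\in I}$, needed to run the underlying generalized CS-Criterion, is part of the standing assumptions of this section and is inherited automatically. I expect essentially no genuine obstacle here: the statement is a practical repackaging of Theorem \ref{app:main}, and the only point requiring care is the bookkeeping of constants---in particular, choosing the summability exponent $\widetilde{C}_1$ strictly below the growth rate $C_2$ so that the dominating series is a convergent stretched exponential.
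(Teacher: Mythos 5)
Your route is the one the paper intends: Corollary \ref{corol:app} is stated as a direct consequence of Theorem \ref{app:main}, and your verification of the series condition is exactly right --- in $\ell_p(\NN)$ or $c_0(\NN)$ one has $\|e_k\|=1$, so by the Observation it suffices to check \eqref{cond:simp:app:F}, and taking the free constant of the theorem to be $C_2/2$ turns the hypothesis $\inf_{x\in I}w_1(x)\cdots w_k(x)\geq C_1\exp(C_2k^\alpha)$ into the convergent stretched-exponential bound $\tfrac{1}{C_1}\sum_k\exp(-\tfrac{C_2}{2}k^\alpha)$, uniformly in $x\in I$ (which is in fact the form used inside the proof of Theorem \ref{app:main}, where the $c_k$ are defined with a supremum over $x$); the finitely many terms with $k<N$ are harmless.

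The one step that does not hold as you wrote it is the dismissal of the indices $n<N$ in the Lipschitz hypothesis. Theorem \ref{app:main} requires $f_n$ to be $C_0n^\alpha$-Lipschitz for \emph{all} $n\in\NN^*$, whereas the corollary assumes this only for $n\geq N$, and your justification --- that the finitely many remaining $n$ ``only affect a bounded Lipschitz constant'' --- is false in general: the standing assumptions of the section give only \emph{continuity} of $x\mapsto w_j(x)$ (via continuity of the family $(B_{w(a)})_{a\in I}$), and Lipschitzness of $f_n$ for $n\geq N$ implies nothing about $f_{N-1}=f_N-\log w_N$, since $\log w_N$ may fail to be Lipschitz while every hypothesis of the corollary holds. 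So the corollary cannot be obtained by literal citation of the theorem; one must observe that its proof tolerates the weaker hypothesis: choose $\kappa\geq N$ so that the Lipschitz bound is only ever applied to $f_{l+n+k}$ with $l+n+k\geq \kappa\geq N$, and for the small indices $l\leq L$ replace the Lipschitz estimate on $f_l$ by the crude bound $|f_l(\lambda(j))-f_l(\mu(j))|\leq \max_{l\leq L}\big(\max_I f_l-\min_I f_l\big)<\infty$ (continuity plus compactness), which merely multiplies $c_k$ by a constant and preserves summability; similarly, condition \ref{CS2} for the finitely many $n<N$ follows from uniform continuity of $(\lambda,\mu)\mapsto T_\lambda^nS_\mu^nu$ on $K\times K$ together with its value $u$ on the diagonal, after shrinking $\tau$. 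With this (minor but necessary) adaptation your argument is complete; as written, the bridge from the corollary's ``$n\geq N$'' hypotheses to the theorem's ``all $n$'' hypotheses is a genuine gap.
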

Let us now consider the case where $w_1(x)\cdots w_n(x)\approx \exp(xn^\alpha)$, that is, the products $w_1(x)\cdots w_n(x)$ \emph{behave} like $\exp(xn^\alpha)$, namely there are $C_0'$, $C_0''$, $C_1'$, $C_1''>0$ such that, for all $n\in\NN$ and all $x\in I,$ \[C_1'\exp(C_0'xn^\alpha)\leq w_1(x)\cdots w_n(x)\leq C_1''\exp(C_0''xn^\alpha).\]
Then all the hypothesis of Corollary \ref{corol:app} are satisfied, whence $(T_\lambda)_{\lambda\in\Lambda}$ has a common hypercyclic vector whenever $\Lambda$ has HBD$^\circ$ at most $\gamma>0$ and $\alpha\in(0,1/\gamma]$. Furthermore, we know from \cite[Corollary 3.2]{BCQCommon} that $(T_\lambda)_{\lambda\in\Lambda}$ has no common hypercyclic vector when $\dim_\H(\Lambda)>1/\alpha$. In this case, we get a strong equivalence whenever $\gamma=\dim_{\H}(\Lambda)$.
\begin{corollary}
    Let $X=\ell_p, p\in[1,+\infty)$, or $X=c_0$ and let $\Lambda\subset I^d$ be compact set with HBD$^\circ$ at most $\dim_\H(\Lambda)$. If $w_1(x)\cdots w_n(x)\approx \exp(xn^\alpha)$, then $(T_\lambda)_{\lambda\in\Lambda}$ is common hypercyclic if, and only if, $\alpha\in \big(0,1/\dim_\H(\Lambda)\big]$.
\end{corollary}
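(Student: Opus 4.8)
The plan is to obtain the equivalence by combining two results already in place, the hypothesis HBD$^\circ$ at most $\dim_\H(\Lambda)$ serving precisely to make the sufficient and the necessary thresholds coincide. I would start by setting $\gamma:=\dim_\H(\Lambda)$, so that by assumption $\Lambda$ is compact with HBD$^\circ$ at most $\gamma$, and recall that $I=[a,b]$ with $a=\inf I>0$ since $I\subset\RR_+$.

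For the sufficiency direction ($\Leftarrow$), I would note that this is exactly the content of the discussion immediately preceding the statement: the relation $w_1(x)\cdots w_n(x)\approx\exp(xn^\alpha)$ supplies the hypotheses of Corollary \ref{corol:app}, the lower estimate $w_1(x)\cdots w_n(x)\ge C_1'\exp(C_0'xn^\alpha)$ together with $x\ge a>0$ giving the required lower bound $\inf_{x\in I}w_1(x)\cdots w_n(x)\ge C_1'\exp(C_0'an^\alpha)$, and the Lipschitz estimate following from the two-sided comparison. Applying Corollary \ref{corol:app} with this very $\gamma$ then yields a dense $G_\delta$ set of common hypercyclic vectors for every $\alpha\in(0,1/\gamma]=(0,1/\dim_\H(\Lambda)]$.

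For the necessity direction ($\Rightarrow$), I would invoke \cite[Corollary 3.2]{BCQCommon}, which states that $(T_\lambda)_{\lambda\in\Lambda}$ admits no common hypercyclic vector whenever $\dim_\H(\Lambda)>1/\alpha$. Taking the contrapositive, the existence of a common hypercyclic vector forces $\dim_\H(\Lambda)\le 1/\alpha$, that is $\alpha\le 1/\dim_\H(\Lambda)$; since $\alpha>0$, this is exactly $\alpha\in(0,1/\dim_\H(\Lambda)]$. Combining the two implications closes the equivalence.

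The only point demanding care, and really the reason the statement is sharp, is the boundary value $\alpha=1/\dim_\H(\Lambda)$. The result from \cite{BCQCommon} excludes only the strict range $\dim_\H(\Lambda)>1/\alpha$ and is silent at equality; it is the sufficiency half, powered by the generalized CS-criterion (Theorem \ref{cs-gamma-dim}) through Corollary \ref{corol:app}, that captures the endpoint, since $(0,1/\gamma]$ is closed at $1/\gamma$. This is where HBD$^\circ$ at most $\dim_\H(\Lambda)$ is indispensable: it permits the choice $\gamma=\dim_\H(\Lambda)$ in the sufficiency threshold $1/\gamma$, making it match the necessity threshold $1/\dim_\H(\Lambda)$ exactly, whereas a strictly larger HBD$^\circ$ would leave a gap between the two ranges. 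I expect no genuinely hard step, as all analytic work is already contained in the earlier theorems; the contribution here is the clean dichotomy obtained by aligning the two thresholds.
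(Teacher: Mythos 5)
Your proposal is correct and follows exactly the paper's own route: sufficiency via Corollary \ref{corol:app} applied with $\gamma=\dim_\H(\Lambda)$ (which is where the HBD$^\circ$ hypothesis and the closed endpoint $\alpha=1/\dim_\H(\Lambda)$ enter), and necessity via \cite[Corollary 3.2]{BCQCommon}. One shared informality worth noting: both you and the paper treat the $C_0n^\alpha$-Lipschitz hypothesis on $x\mapsto\sum_{j\le n}\log w_j(x)$ as following from the two-sided bound $w_1(x)\cdots w_n(x)\approx\exp(xn^\alpha)$, which is not a literal consequence and is really an additional regularity assumption on the weight family.
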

This immediately implies Corollary \ref{corol:intro}, for any $\beta$-Hölder curve has HBD$^\circ$ at most $1/\beta$ (see Section \ref{subsec:holder}). By considering the HBD$^\circ$ that we have calculated in Section \ref{sec:some-fractals}, we get the following other examples.
\begin{example}
    Consider a compact parameter set $\Lambda\subset \RR_+^2$ and the weights $w_n(x)=1+\frac{x}{n^{1-\alpha}}, x>0, n\in \NN.$ Define the family $(T_\lambda)_{\lambda}$ by $T_\lambda= B_{w(x)}\times B_{w(y)}$ for each $\lambda=(x,y)\in\Lambda$. We have the following.
    \begin{itemize}
        \item If $\Lambda$ is a Sierpiński gasket, then $\bigcap_{\lambda\in \Lambda}HC(T_\lambda)\neq \varnothing \Longleftrightarrow 0<\alpha\leq \frac{\log 2}{\log 3}.$
        \item If $\Lambda$ is a square, then $\bigcap_{\lambda\in \Lambda}HC(T_\lambda)\neq \varnothing \Longleftrightarrow 0<\alpha\leq \frac{1}{2}.$
        \item If $\Lambda$ is a Von Koch curve, then $\bigcap_{\lambda\in \Lambda}HC(T_\lambda)\neq \varnothing \Longleftrightarrow 0<\alpha\leq \frac{\log 3}{\log 4}.$
        \item If $\Lambda$ is a Minkowski sausage, then $\bigcap_{\lambda\in \Lambda}HC(T_\lambda)\neq \varnothing \Longleftrightarrow 0<\alpha\leq \frac{2}{3}.$
    \end{itemize}
\end{example}
For many applications, in particular for the square, the analogous conclusion can be obtained in $d$ dimensions for any $d\in\NN$. Thus, Theorem \ref{cs-d-dim} is a direct consequence of Theorem \ref{cs-gamma-dim}.

It is worth noticing that Corollary \ref{corol:intro} gives an optimal result whenever the fractal $\Lambda$ indexing the family $(T_\lambda)_{\lambda\in\Lambda}$ admits an \emph{optimal parametrization}, namely $\Lambda=f([0,1])$, where $f:[0,1]\to \RR_+^d$ is an $\frac{1}{\dim_\H(\Lambda)}$-Hölder continuous curve. It is folklore that some classical fractals, like the Sierpiński gasket (seen as the Sierpiński arrowhead curve) and the square (seen as the Hilbert curve), satisfy this condition, but there are some surprising examples where this is also possible. For instance, as shown in \cite[Example 5.2]{gifs2}, the Sierpiński carpet can be uniformly ordered and admits an optimal parametrization (see also \cite{gifs1} for more details on the theory of \emph{graph-directed iterated function systems}).

We can also apply Theorem \ref{app:main} to the family $\big(\bigoplus_{k=1}^d \lambda(k) D\big)_{(\lambda(1),\dots, \lambda(d))\in \Lambda}$ when $\Lambda$ is a Lipschitz curve contained in $(e,+\infty)^d$, where $D:f\mapsto f'$ is the operator of complex differentiation acting on the space $H(\CC)$ of entire functions. Indeed, we can interpret $H(\CC)$ as a Fréchet sequence space by identifying each $f\in H(\CC)$ with the sequence $(a_n)_n$ of its Taylor coefficients at 0. The family of seminorms $(\|\cdot\|)_{q\geq 1}$ given by $\|(a_n)_n\|_q=\sum_{n=0}^{+\infty}|a_n|q^n$ induces the topology of uniform convergence on compact sets of $H(\CC)$. By doing so, the operators $xD$, $x>0$, correspond to the weighted backward shifts induced by $w_n(x)=xn$, $n\in\NN$. However, it is possible to get a more general conclusion by applying Theorem \ref{cs-gamma-dim} directly.
\begin{example}
For any $d\geq 1$ and any Lipschitz curve $\Lambda\subset \RR^d_+$, the family $\big(\bigoplus_{k=1}^d \lambda(k) D\big)_{(\lambda(1),\dots, \lambda(d))\in \Lambda}$ has a common hypercyclic vector.
\end{example}
\begin{proof}
Proceed like in \cite[Theorem 13]{CostakisSambarino}.
\end{proof}
\section{Covering homogeneously ordered self-similar fractals} \label{sec:covering}

\subsection{How these coverings actually look like}

Before proceeding to the proof of Lemma \ref{key:lemma}, let us explain with examples how the construction works. Take for instance the Sierpiński gasket. It has $r=3$ similarities and $c=\frac{1}{2}$ uniform contraction ratio, so its similarity (and Hausdorff) dimension is $\gamma=\frac{\log 3}{\log 2}$. Therefore, in the CS-Criterion \ref{cs-gamma-dim}, we require a covering with fineness $\frac{\tau}{n^{1/\gamma}}$ so that property \ref{CS2} can be used. Also notice that, in the proof of the criterion, we have taken $n_i=iN$ for some big $N$. In order words, our covering must be composed by squares of side at most $\frac{\tau}{(N)^{1/\gamma}}, \frac{\tau}{(2N)^{1/\gamma}}, \frac{\tau}{(3N)^{1/\gamma}}, \dots$, and so on. Let us denote these squares by $S_1, S_2, S_3, \dots,$ respectively, and call their side length the \emph{sizes of the ``allowed'' square parts}.
\begin{figure}[H]
    \centering
    \includegraphics[width=0.95\textwidth]{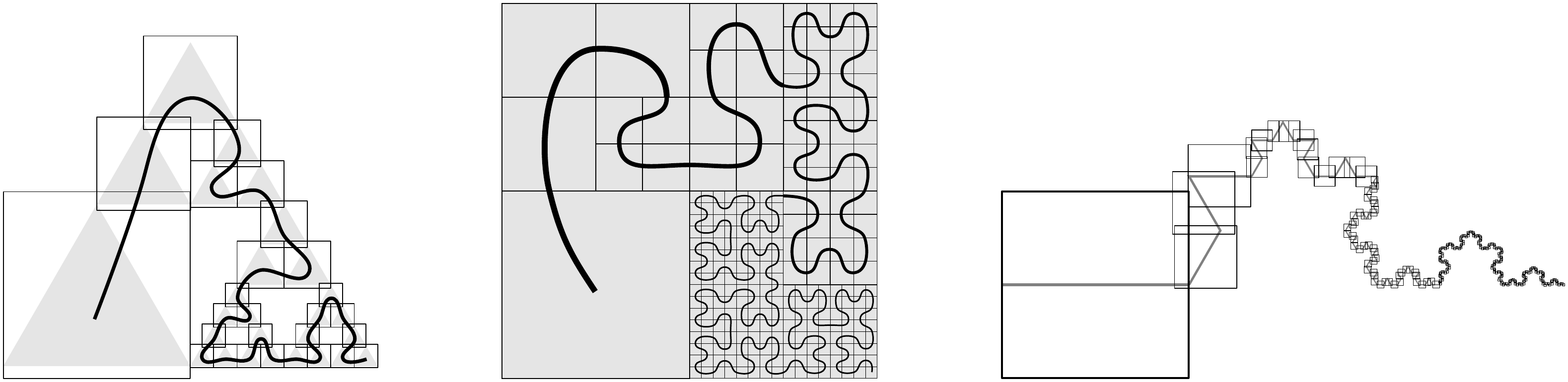}
    \caption{Some examples of refining partitions (for $s=1$)}
    \label{ex:partitions}
\end{figure}
In general, we choose $s$ big enough such that $\frac{\tau}{(N)^{1/\gamma}}\geq c^s=\frac{1}{2^s}$ and we reset $\tau,N$ so that $\frac{\tau}{(N)^{1/\gamma}}=c^s$. For the sake of clarity in the discussion, let us suppose that $s=1$, that is, let us assume $\frac{\tau}{(N)^{1/\gamma}}=c=\frac{1}{2}$ (see Figure \ref{ex:partitions}). In this case, $S_1$ covers the first covering part of the resolution 1 covering (that is, the covering associated to $m=1$ in the definition of HBD$^\circ$). This means that, if we look at the covering of resolution 2, $S_1$ have covered $r=3$ our of $r^2=3^2$ covering parts, so that there are $r^2-r=r(r-1)$ resolution 2 parts left to be covered. This value is divisible by $r-1$ (in general we use that $r(r^{s}-1)$ is divisible by $r-1$). Thus, we group the remaining resolution 2 covering parts 2 by 2. Now, it is a matter of noticing that the sizes of the allowed square parts match the increasing resolution in each group. The fundamental property used is the fact that, whenever we consider allowed square parts up to a power of $r$, we recuperate a side length which matches the contracted side of respective resolution. In fact, $\frac{\tau}{(r^mN)^{1/\gamma}}=\frac{1}{(r^m)^{1/\gamma}}\frac{\tau}{(N)^{1/\gamma}}=\frac{1}{2^{m+1}}=c^{m+1}$, since $r^{1/\gamma}=3^{\log 2/\log 3}=2=\frac{1}{c}$. We have the following conclusions.
\begin{itemize}
    \item $S_2, S_3$ have sizes $\frac{\tau}{(2N)^{1/\gamma}}>\frac{\tau}{(3N)^{1/\gamma}}=c^2$, thus 2 parts of resolution 2 can be covered. There are $4$ resolution 2 parts left to be covered. Notice that $S_3$ is a perfect fit, that is, its side has length $c^2$.
    \item $S_4, S_5, \dots, S_9$ have sizes $\frac{\tau}{(4N)^{1/\gamma}}>\cdots>\frac{\tau}{(9N)^{1/\gamma}}=c^3$. With $S_4, S_5, S_6$ we can cover one part of resolution $2$ and with $S_7,S_8, S_9$ we can cover another part. In total, 2 parts of resolution 2 can be covered. There are $2$ resolution 2 parts left to be covered. Notice that $S_9$ is a perfect fit.
    \item $S_{10}, \dots, S_{27}$ have sizes $\frac{\tau}{(10N)^{1/\gamma}}>\cdots>\frac{\tau}{(27N)^{1/\gamma}}=c^4$. Notice that, altogether, they can cover 2 parts of resolution 2, and this completes the covering. Once more, $S_{27}$ is a perfect fit.
\end{itemize}
We insist in mentioning that we have a ``perfect fit'' at the end of each group because this is what grant optimality of the covering. In fact, although we cover multiple times several small portions of the fractal, the refinement of the final covering matches its contraction ratio at the end of each patch. Two other examples are also represented in Figure \ref{ex:partitions}.

The following section is dedicated to the general construction of the covering. Thus, it can be seen as a long proof of our Lemma \ref{key:lemma}. We will establish several notations and definitions throughout the construction. We will also state and prove a couple of useful smaller lemmas.

\subsection{Proof of Lemma \ref{key:lemma}} \label{sec:lemma:proof}

For all that follows, $\Gamma$ is a compact subset of $\RR^d$ for some $d\geq 1$, with homogeneously ordered box dimension at most $\gamma\in (0,d]$. For simplicity we will make the construction for $d=2$. Let $r\geq 2$ and $\rho>0$ be such that, for all $m\geq 1$, there is a covering of $\Gamma$ by compact sets $(\Lambda_{\bf i})_{{\bf i}\in I_r^m}$ satisfying conditions (i), (ii) and (iii) of the definition of HBD$^\circ$. We can assume that all these covering parts are squares. We set $\alpha=1/\gamma$, $c=1/r^\alpha$ and we fix $D\geq \frac{\rho}{c^3}$ arbitrarily.

The \emph{portion of $\Gamma$ inside a rank $s$ part $\Lambda_{\bf i}$} is defined to be \[\Gamma({\bf i})=\bigg(\bigcap_{m\geq1}\bigcup_{{\bf j} \in I_r^m}\Lambda_{{\bf i},{\bf j}}\bigg)\cap \Gamma,\]
where ${\bf i}\in I_r^s$. We have, for all $s\in \NN$, \[\Gamma=\bigcup_{{\bf i}\in I_r^s}\Gamma({\bf i}).\]
Each of these portions are themselves sets with HBD$^\circ$ at most $\gamma$ for the family of coverings $\big((\Lambda_{{\bf i},{\bf j}})_{{\bf j}\in I_r^m}\big)_{m\geq 1}$. Similarly, we can define the \emph{portion of $\Gamma$ outside a rank $s$ part $\Lambda_{\bf i}$} as $\bigcup_{{\bf j}\neq {\bf i}}\Gamma({\bf j})$ and the \emph{portion of $\Gamma$ inside a subfamily of parts}, say $\{\Lambda_{\bf j}\}_{{\bf j}\succ{\bf i}}$, as the union $\bigcup_{{\bf j}\succ {\bf i}}\Gamma({\bf j})$ (remember that $\prec $ stand for the lexicographical order).

We can assume without loss of generality that $\rho/c^3\leq D$ (otherwise we repeat the construction a finite number of times on portions of $\Gamma$ inside the parts of a covering of resolution some sufficiently large $m$).

For each resolution $m\in\NN$ and each ${\bf k}\in I_r^m$, we let $\lambda_{\bf k}=(x_{\bf k}, y_{\bf k})$ be the bottom-left corner of the square $\Lambda_{\bf k}$. From property (i) of the covering $(\lambda_{\bf i})_{{\bf i}\in I_r^m}$, we know that \[\Lambda_{\bf k}\subset \big[x_{\bf k}, x_{\bf k}+c^m\rho\big]\times\big[y_{\bf k}, y_{\bf k}+c^m\rho\big].\]
The first step in our construction is to establish Lemma \ref{dSlemma:general}, which allows us to count the indexes ``between'' two parts of same ``resolution''. Let us first establish some notations and definitions to simplify our statements and discussion. Let us define what we mean by \emph{resolution} or \emph{rank}, \emph{jump size} and \emph{enumeration size}. For each $m\in\NN$, we known that $(\lambda_{\bf i},\Lambda_{\bf i})_{{\bf i}\in I_r^m}$ is a tagged covering of $\Gamma$. For each ${\bf i}\in I_r^m$, we call $m$ the \emph{rank} (or \emph{resolution}) of the \emph{covering part} $\Lambda_{\bf i}$. Given ${\bf i}=(i_1,\dots,i_m), {\bf j}=(j_1,\dots,j_m)\in I_r^m$, we talk about where a \emph{jump} $\lambda_{\bf i}\to\lambda_{\bf j}$ happens to emphasize the subdivision parts $\Lambda_{\bf i}$ and $\Lambda_{\bf j}$ containing $\lambda$ and $\mu$, respectively. When we say, for example, that the \emph{jump} $\lambda\to\mu$ happens in the same rank $m-1$ covering part, we mean that $i_k=j_k$ for all $k=1,\dots,m-1$, in other words, $\lambda,\mu\in \Lambda_{i_1,\dots,i_{m-1}}$. We also talk about the \emph{size of the enumeration from ${\bf i}$ to ${\bf j}$} to emphasize the number of indexes that are counted from ${\bf i}$ to ${\bf j}$ in the lexicographical order. This number is denoted by ${\bf i}\to{\bf j}$. We also talk about the \emph{size of the jump} $\lambda_{\bf i}\to\lambda_{\bf j}$ to refer to the number $\|\lambda_{\bf l}-\lambda_{\bf j}\|$. Lemma \ref{dSlemma:general} is fundamental, as it associates the value ${\bf i}\to{\bf j}$ to $\lambda_{\bf i}\to\lambda_{\bf j}$ whenever they are indexes corresponding to covering parts of same resolution. We finish our setup by pointing out a simple (but noticeable) feature of the way we tagged our family of coverings. If $\lambda_{\bf i},\lambda_{\bf j}$ are two tags, not necessarily in the same resolution, then, by setting $k=\diam\big(\Lambda_{\bf i}\cup \Lambda_{\bf j}\big)$, the following \textbf{strict} inequality holds. \[\|\lambda_{\bf i}-\lambda_{\bf j}\|<k.\]
This is a consequence of the fact that the tags $\lambda_{\bf i},\lambda_{\bf j}$ are defined to be the bottom-left corner of the squares $\Lambda_{\bf i}, \Lambda_{\bf j}\subset \Lambda_{\bf i}\cup \Lambda_{\bf j}$, so, precisely, \[\|\lambda_{\bf i}-\lambda_{\bf j}\|\leq \min\big(k-\diam(\Lambda_{\bf i}),k-\diam(\Lambda_{\bf j})\big).\]

\begin{lemma}\label{dSlemma:general}
    For all $m\in\NN$ and all ${\bf j}, {\bf l}\in I_r^m$, 
    \begin{equation*}
        n\in[\![0,m[\![, \ \|\lambda_{\bf l}-\lambda_{\bf j}\|\geq \frac{c^m\rho}{c^n}\implies {\bf j}\to{\bf l}\geq \frac{r^{n-1}+r-2}{r-1}.
    \end{equation*}
\end{lemma}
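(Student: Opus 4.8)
The plan is to prove the contrapositive by induction on $n$, exploiting the self-similarity of the coverings together with the connectedness property \ref{hbd:iii}. Throughout I keep the notation $c=1/r^{\alpha}$, $\alpha=1/\gamma$, and I recall that each tag $\lambda_{\bf i}$ is the bottom-left corner of $\Lambda_{\bf i}$, so that $\|\lambda_{\bf i}-\lambda_{\bf l}\|\leq\diam(\Lambda_{\bf i}\cup\Lambda_{\bf l})$, with the strict inequality noted just before the statement. By symmetry I may assume ${\bf j}\prec{\bf l}$ and read ${\bf j}\to{\bf l}$ as the number of jumps (the gap of lexicographical positions). Writing $\frac{r^{n-1}+r-2}{r-1}=\frac{r^{n-1}-1}{r-1}+1$, the statement to establish is: if ${\bf j}\to{\bf l}\leq\frac{r^{n-1}-1}{r-1}=1+r+\cdots+r^{n-2}$, then $\|\lambda_{\bf j}-\lambda_{\bf l}\|<c^{m-n}\rho$. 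The base case $n=1$ is immediate, since a positive distance forces ${\bf j}\neq{\bf l}$.

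First I would dispose of the coarse alternatives. If the first $m-n$ coordinates of ${\bf j}$ and ${\bf l}$ coincide, both tags lie in a single resolution-$(m-n)$ square, of diameter at most $c^{m-n}\rho$ by \ref{hbd:i}, and the strict corner inequality yields $\|\lambda_{\bf j}-\lambda_{\bf l}\|<c^{m-n}\rho$ at once. Otherwise let ${\bf a}\prec{\bf b}$ be the distinct length-$(m-n)$ prefixes of ${\bf j},{\bf l}$. If ${\bf a}$ and ${\bf b}$ are \emph{not} lexicographically consecutive at resolution $m-n$, then a whole intermediate resolution-$(m-n)$ block lies strictly between ${\bf j}$ and ${\bf l}$, forcing ${\bf j}\to{\bf l}\geq r^{n}\geq\frac{r^{n-1}+r-2}{r-1}$; so I may assume ${\bf b}$ is the successor of ${\bf a}$.

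The heart of the matter is this adjacent-block case, handled by self-similar induction. Writing ${\bf j}^{*}=({\bf a},r,\dots,r)$ and ${\bf l}^{*}=({\bf b},1,\dots,1)$ for the last square of block ${\bf a}$ and the first of block ${\bf b}$, property \ref{hbd:iii} applied at the branch point of ${\bf a},{\bf b}$ gives a point $z\in\Lambda_{({\bf a},r)}\cap\Lambda_{({\bf b},1)}$, so $\lambda_{{\bf j}^{*}}$ and $\lambda_{{\bf l}^{*}}$ sit near a common boundary point $z$. Since ${\bf j}\to{\bf l}=({\bf j}\to{\bf j}^{*})+1+({\bf l}^{*}\to{\bf l})$, a small total count keeps $\lambda_{\bf j}$ close to the exit of block ${\bf a}$ and $\lambda_{\bf l}$ close to the entrance of block ${\bf b}$. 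Now I use that each portion $\Gamma({\bf a})$ is again an HBD$^{\circ}$ set with the same $r,c$ and with $\rho$ replaced by $c^{m-n}\rho$: the suffixes of ${\bf j},{\bf j}^{*}$ inside ${\bf a}$ are length-$n$ multi-indices, so the inductive hypothesis applied to the scaled block converts a proximity $\|\lambda_{\bf j}-\lambda_{{\bf j}^{*}}\|\geq c^{m-n+1}\rho$ into ${\bf j}\to{\bf j}^{*}\geq\frac{r^{n-2}+r-2}{r-1}$, and symmetrically on the ${\bf b}$-side. The recursion $\frac{r^{n-1}+r-2}{r-1}=\frac{r^{n-2}+r-2}{r-1}+r^{n-2}$ is exactly what closes the induction. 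Contrapositively, ${\bf j}\to{\bf l}\leq 1+r+\cdots+r^{n-2}$ confines $\lambda_{\bf j}$ and $\lambda_{\bf l}$ to nested terminal/initial sub-blocks so close to $z$ that $\|\lambda_{\bf j}-\lambda_{\bf l}\|\leq\|\lambda_{\bf j}-z\|+\|z-\lambda_{\bf l}\|<c^{m-n}\rho$, via the diameter bounds from \ref{hbd:i}.

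The main obstacle is precisely this quantitative step: upgrading the one-level contact supplied by \ref{hbd:iii} into control of $\|\lambda_{\bf j}-z\|$ and $\|z-\lambda_{\bf l}\|$ \emph{proportional to the refinement depth} of the sub-blocks containing ${\bf j}$ and ${\bf l}$. Since \ref{hbd:iii} only asserts contact of the first-generation descendants $\Lambda_{({\bf a},r)}$ and $\Lambda_{({\bf b},1)}$, a naive estimate loses a factor $c$ at each boundary and breaks down when $c\geq 1/2$, i.e. $r^{1/\gamma}\leq 2$. The slack encoded in the additive term $r-2$ of $\frac{r^{n-1}+r-2}{r-1}$, together with re-applying \ref{hbd:iii} \emph{inside} the scaled portions $\Gamma({\bf a}),\Gamma({\bf b})$ at the appropriate resolution rather than only at the top level, is what must be leveraged to absorb these constants and to pin down the exact bound; carefully tracking the diameters across the recursion, and invoking the strict corner inequality to turn the final ``$\leq$'' into ``$<$'', completes the argument.
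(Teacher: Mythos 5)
Your proposal stalls exactly where you admit it does, and the sketch for finishing (``the slack encoded in the additive term $r-2$ \dots re-applying (iii) inside the scaled portions \dots carefully tracking the diameters across the recursion \dots completes the argument'') is a list of hopes rather than a proof; worse, as described it cannot be made to work. Your recursion needs metric control of $\|\lambda_{\bf j}-z\|$ and $\|z-\lambda_{\bf l}\|$, where $z\in\Lambda_{({\bf a},r)}\cap\Lambda_{({\bf b},1)}$ and ${\bf j}$, ${\bf l}$ lie in the terminal chain of block ${\bf a}$ and the initial chain of block ${\bf b}$. But property (iii) of the HBD$^\circ$ definition is a statement about the children of two consecutive \emph{siblings}: re-applied inside the portion $\Gamma({\bf a})$ it only produces contact points interior to ${\bf a}$ (for instance $\Lambda_{({\bf a},r-1,r)}\cap\Lambda_{({\bf a},r,1)}\neq\varnothing$), never a contact across the ${\bf a}$/${\bf b}$ boundary below the first generation. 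Nothing in (i)--(iii) forces $\Lambda_{({\bf a},r,r,\dots,r)}$ to stay near $z$, so the only available bound is $\|\lambda_{\bf j}-z\|\leq\diam\big(\Lambda_{({\bf a},r)}\big)\leq c^{k+1}\rho$, hence $\|\lambda_{\bf j}-\lambda_{\bf l}\|\leq 2c^{k+1}\rho$, and the required inequality $2c^{k+1}\rho<c^{k}\rho$ fails precisely when $c>1/2$ --- the breakdown you yourself flag. The slack $r-2$ cannot rescue this: it vanishes for $r=2$, which is exactly the situation ($\beta$-H\"older curves, $c=2^{-\beta}>1/2$) where the problem occurs. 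So the inductive step is not closed, and there is no evident way to close it along the route you chose.

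The paper's own proof avoids your recursion entirely. Writing $k=m-n$, it argues that since $\|\lambda_{\bf l}-\lambda_{\bf j}\|\geq c^{k}\rho$ is at least the diameter of every rank-$k$ part (so ${\bf j}$ and ${\bf l}$ sit in different rank-$k$ parts) and strictly exceeds the diameter of every rank-$(k+1)$ part, the enumeration from ${\bf j}$ to ${\bf l}$ must sweep at least one \emph{complete} rank-$(k+1)$ part; such a part contains $r^{m-(k+1)}=r^{n-1}$ indices, so ${\bf j}\to{\bf l}\geq r^{n-1}$, and the elementary inequalities $1\geq\frac{r^{-1}+r-2}{r-1}$ (case $n=0$) and $r^{n-1}\geq\frac{r^{n-1}+r-2}{r-1}$ (case $n\geq1$) finish the proof. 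In other words, the count is extracted from a fully enumerated intermediate block, not from distance-to-contact-point estimates; that is the idea missing from your attempt. It is fair to add that the configuration you isolated --- ${\bf j}$ in $({\bf a},r)$ and ${\bf l}$ in $({\bf b},1)$ with ${\bf b}$ the lexicographic successor of ${\bf a}$ --- is also the one place where the paper's ``one full part is swept'' claim leans on the touching property and is treated rather informally (by pictures); your diagnosis of where the difficulty lies is accurate, but identifying the difficulty is not the same as resolving it.
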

\begin{proof}
Let $m\in\NN$, ${\bf j}=(j_1,\dots,j_m), {\bf l}=(l_1,\dots,l_m)\in I_r^m$ and $n\in[\![0,m[\![$ as in the hypothesis. Writing $k=m-n\geq 1$, we have \[\|\lambda_{\bf l}-\lambda_{\bf j}\|\geq \frac{c^m\rho}{c^n}=c^k\rho=\rho\bigg(\frac{1}{r^{1/\gamma}}\bigg)^k,\]
which is at least the diameter of the resolution $k$ parts of the family of coverings of $\Gamma$. 
Hence, from ${\bf j}$ to ${\bf l}$, the enumeration has to leave one resolution $k$ part and go to the following one. We notice that, since $0<c<1$, the value $c^k\rho$ is strictly bigger than any resolution $k+1$ part (which have diameter $\leq c^{k+1}\rho$). We also notice that inside each resolution $k$ part there are at least 2 resolution $k+1$ parts (because $r\geq 2$). We conclude that the enumeration ${\bf j}\to{\bf l}$ has to count at least one full rank $k+1$ subdivision part, which contains $r^{m-(k+1)}$ indexes inside, that is, 
\[{\bf j}\to{\bf l}\geq r^{m-(k+1)}=r^{n-1}.\]
Some examples, with and without overlaps, are represented in Figure \ref{kiwi}.
\begin{figure}[H]
\begin{subfigure}{.32\textwidth}
  \centering
  \includegraphics[width=4cm]{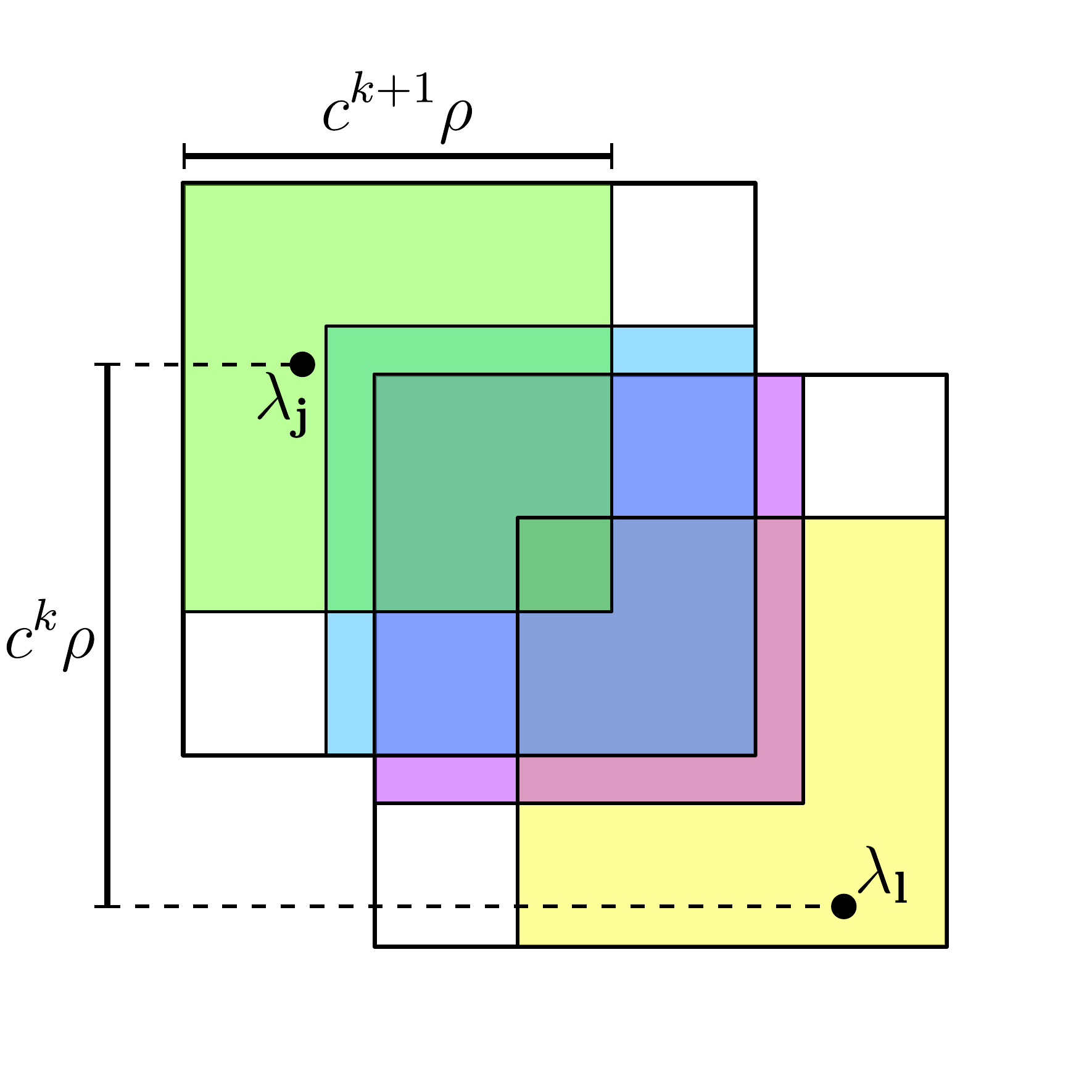}
  \caption{Big overlap}
  \label{kiwi-big}
\end{subfigure}%
\begin{subfigure}{.32\textwidth}
  \centering
  \includegraphics[width=4cm]{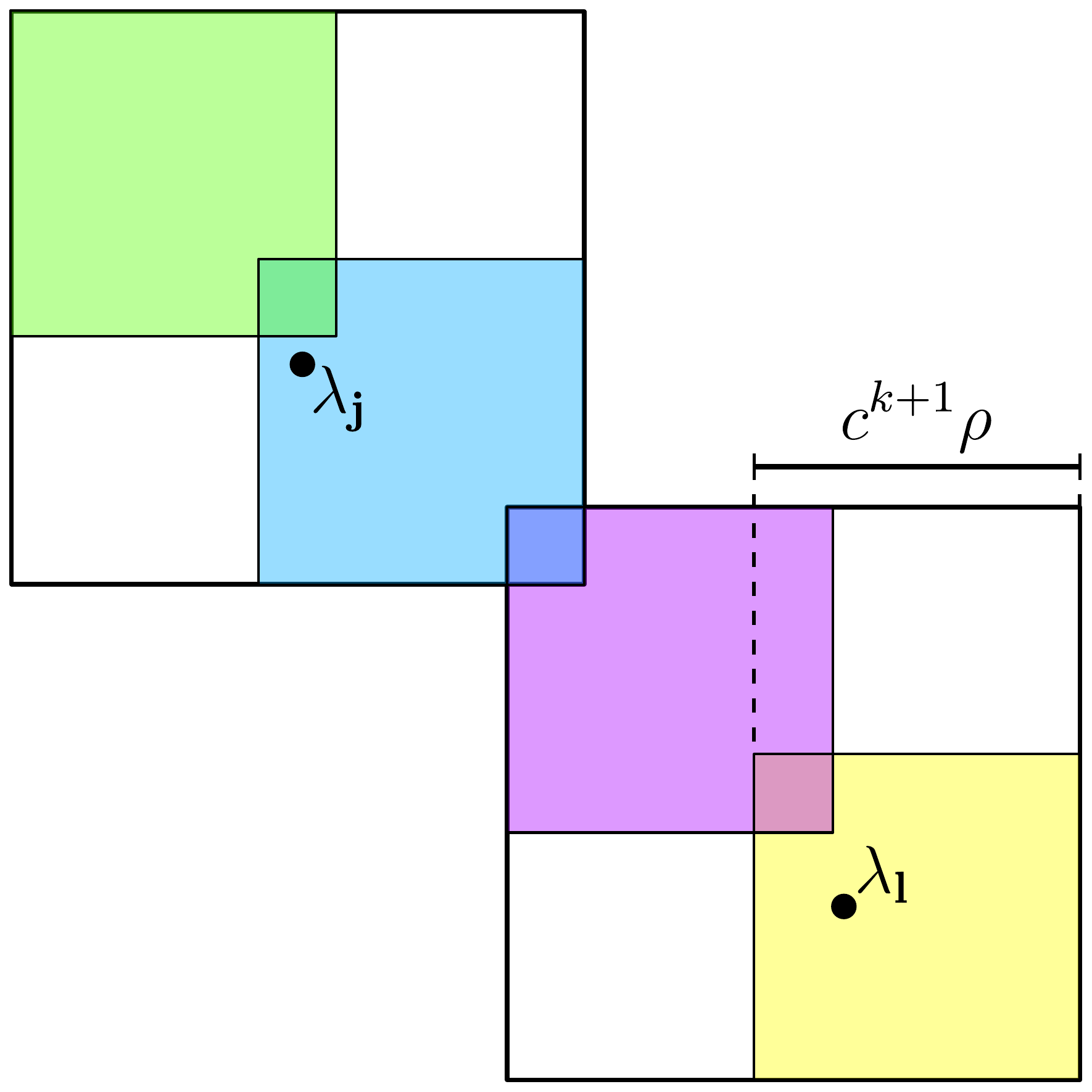}
  \caption{Small overlap}
  \label{kiwi-medium}
\end{subfigure}%
\begin{subfigure}{.32\textwidth}
  \centering
  \includegraphics[width=4cm]{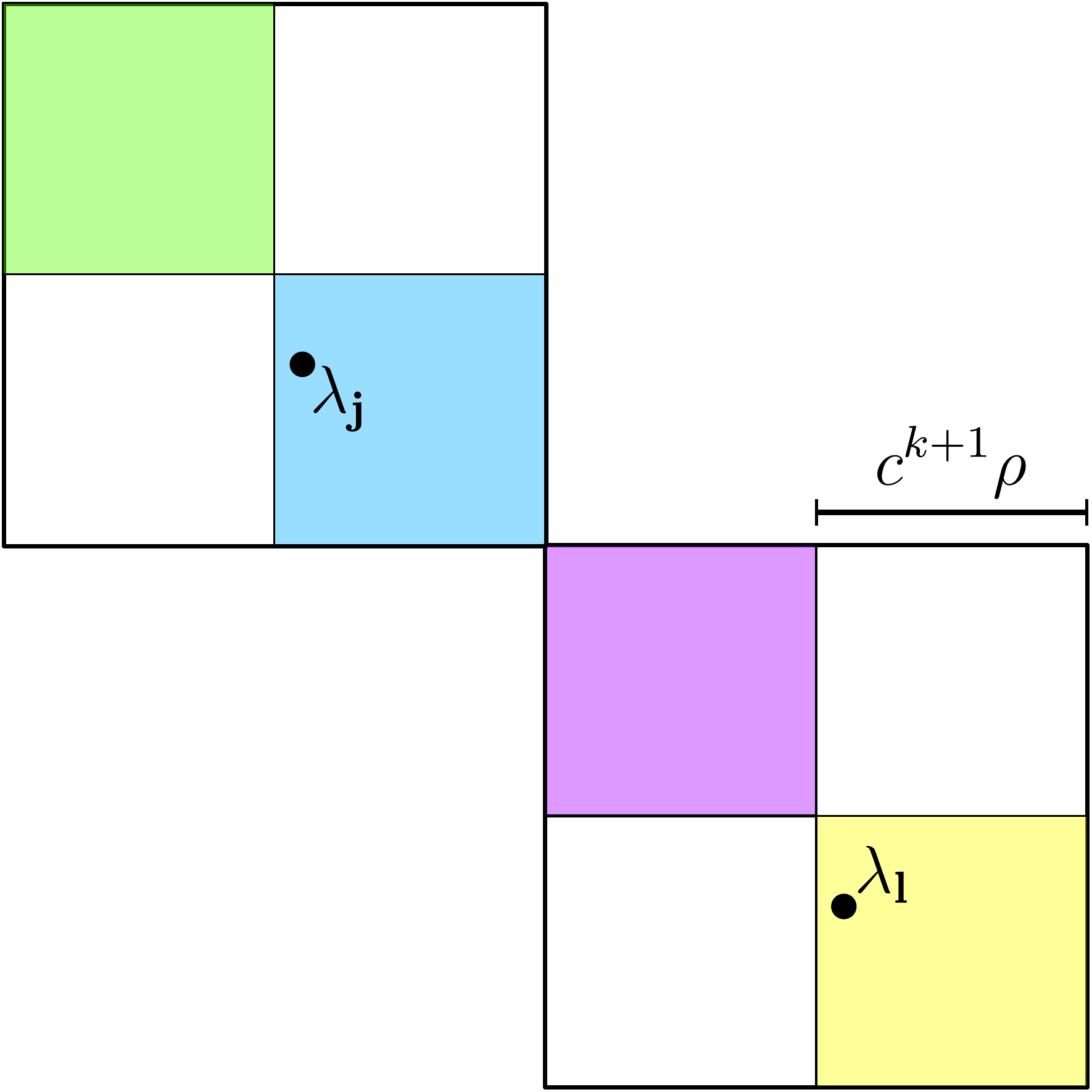}
  \caption{No overlap}
  \label{no-kiwis}
\end{subfigure}
\caption{
Comparing the index counting on all kinds of overlaps. In any case, one rank $k+1$ subdivision part (represented in purple in these examples) is fully counted.
}
\label{kiwi}
\end{figure}
If $n=0$ then obviously \[{\bf j}\to{\bf l}\geq 1\geq \frac{r^{n-1}+r-2}{r-1}.\]
If $n\geq1$, then
\[{\bf j}\to{\bf l}\geq r^{n-1}\geq \frac{r^{n-1}+r-2}{r-1}.\]
This completes the proof.
\end{proof}

Let us continue the proof of Lemma \ref{key:lemma}. Given $\tau>0$ and $N\in\NN$, we aim to construct a tagged covering $(\lambda_k,\Gamma_k)_{k=1,\dots,q}$ of $\Gamma$, for some $q\in\NN$, of the form 
\[
\Gamma_k =\biggl[x_k, x_k+\frac{\tau}{(kN)^{1/\gamma}}\biggr]\times\biggl[y_k, y_k+\frac{\tau}{(kN)^{1/\gamma}}\biggr]
 \quad 
k=1,\dots,q,
\]
where $\lambda_1=(x_1,y_1),\dots, \lambda_q=(x_q,y_q)$, and satisfying
\[
\|\lambda-\mu\|\leq D\Big(\frac{l-j}{l}\Big)^{1/\gamma} = D\Big(\frac{n_l-n_j}{n_l}\Big)^{1/\gamma},
\] for all $1\leq j<l\leq q$ and all $\lambda\in \Gamma_j, \mu\in\Gamma_l$. This construction will be made in several steps. To begin with, let $s\in\NN$ be such that $\frac{\tau}{N^\alpha}\geq c^s\rho$. We can assume without loss of generality that $\frac{\tau}{N^\alpha}=c^s\rho$ (by decreasing $\tau$ if necessary) and $3(r-1)^\alpha c^s\leq 1$ (by increasing $s$ if necessary). We shall make use of the following lemma, whose verification is trivial.
\begin{lemma}\label{lemma:side:general}
    For all $k,n\in\NN$, if $\frac{\tau}{(kN)^{\alpha}}=c^n\rho,$ then \[\frac{\tau}{((k+1)N)^{\alpha}}>\frac{\tau}{((k+2)N)^{\alpha}}>\cdots > \frac{\tau}{(rkN)^{\alpha}}=c^{n+1}\rho.\]
\end{lemma}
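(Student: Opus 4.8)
The plan is to observe that the statement splits into two independent and elementary pieces: the chain of strict inequalities is pure monotonicity, while the rightmost equality is a single closed-form evaluation. Both follow immediately from the standing conventions $\alpha = 1/\gamma$ and $c = 1/r^\alpha$ fixed at the start of the construction.

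First I would dispose of the strict inequalities. Since $\tau, N > 0$ and $\alpha > 0$, the map $t \mapsto \tau/(tN)^\alpha$ is strictly decreasing on $(0,+\infty)$. Evaluating it successively at the integers $k+1 < k+2 < \cdots < rk$ therefore produces the displayed strictly decreasing chain with no further argument. Next I would establish the terminal equality by factoring $r^\alpha$ out of the denominator:
\[
\frac{\tau}{(rkN)^\alpha} = \frac{1}{r^\alpha}\cdot\frac{\tau}{(kN)^\alpha}.
\]
Substituting the hypothesis $\tau/(kN)^\alpha = c^n\rho$ and using $c = 1/r^\alpha$ rewrites the right-hand side as $c\cdot c^n\rho = c^{n+1}\rho$, which is exactly the claimed value.

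There is no real obstacle here; the only point requiring any care is to keep the two defining relations $\alpha = 1/\gamma$ and $c = 1/r^\alpha$ (equivalently $r^\alpha = 1/c$) straight. The entire content of the lemma is that multiplying one admissible side length $\tau/(kN)^\alpha$ by the factor $1/r^\alpha$ reproduces exactly the contraction ratio $c$ of the underlying self-similar family, so that advancing the index $k$ by a full power of $r$ lands precisely on the next resolution. This is the ``perfect fit'' phenomenon described informally before the lemma, and it is what lets the grouping of allowed square sizes by powers of $r$ synchronize with the resolutions of the HBD$^\circ$ covering.
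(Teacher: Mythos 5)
Your proof is correct and is exactly the verification the paper has in mind: the paper itself offers no written proof, stating only that the lemma's ``verification is trivial,'' and your two observations (strict monotonicity of $t\mapsto \tau/(tN)^\alpha$ for $\alpha>0$, plus the factorization $\tau/(rkN)^\alpha=r^{-\alpha}\,\tau/(kN)^\alpha=c\cdot c^n\rho$ using $c=1/r^\alpha$) constitute precisely that trivial check. Nothing is missing, and your closing remark correctly identifies the lemma's role as the ``perfect fit'' synchronization between the allowed square sizes and the covering resolutions.
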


We first look at $\Gamma$ at resolution $s$, so the covering parts have diameter at most $c^s\rho$. Thus, one square of side $c^s\rho$ can cover the first rank $s$ part $\Lambda_{1,\overset{s}{\dots},1}$. Let 
\[\Gamma_1=[x_1,x_1+c^s\rho]\times[y_1,y_1+c^s\rho]\supset \Lambda_{1,\overset{s}{\dots},1},\]
where the tag $\lambda_1= (x_1,y_1)$ is ``inherited'' from the part $\Lambda_{1,\overset{s}{\dots},1}$ that $\Gamma_1$ covers, that is, $x_1=x_{1,\overset{s}{\dots},1}$ and $y_1=y_{1,\overset{s}{\dots},1}$. We also define the (only) rank $s$ subdivision part $H_s^{1}(1)=\Gamma_1$. This part encompasses nothing but one index $j=1$, so it has \emph{fineness} $1$. The forthcoming definitions will further clarify what we mean by ``fineness''.

If we look at what we have covered of $\Gamma$ so far, we see that only the portion of $\Gamma$ inside $\Lambda_{1,\overset{s}{\dots},1}$ was covered, that is, there is the portion of $\Gamma$ inside the $r^s-1$ rank $s$ parts $\{\Lambda_{\bf j}\}_{{\bf j}\succ (1,\overset{s}{\dots},1)}$ left to be covered. Let us increase the resolution by one, that is, we now look at each member from $\{\Lambda_{\bf j}\}_{{\bf j}\succ (1,\overset{s}{\dots},1)}$ as containing $r$ rank $s+1$ subdivision parts each. In order words, we have left to be covered the portion of $\Gamma$ inside the $r(r^s-1)$ rank $s+1$ subdivision parts $\{\Lambda_{\bf j}\}_{{\bf j}\succ (1,\overset{s}{\dots},1,r)}$. To cover these $r(r^s-1)$ parts is our goal until the end of this construction. They have diameter at most $c^{s+1}\rho$, so we can apply Lemma \ref{lemma:side:general} in order to ensure that the first $r-1$ of these parts can be covered by squares of side $\frac{\tau}{(2N)^\alpha}>\cdots > \frac{\tau}{(rN)^\alpha}=c^{s+1}\rho$. We write these squares as $\Gamma_2,\dots,\Gamma_r$, where, for $j=2,\dots,r$,
\[\Gamma_j=\bigg[x_j, x_j+\frac{\tau}{(jN)^\alpha}\bigg]\times\bigg[y_j, y_j+\frac{\tau}{(jN)^\alpha}\bigg]\supset [x_j,x_j+c^{s+1}\rho]\times [y_j,y_j+c^{s+1}\rho].\]
Their tags, noted $\lambda_2=(x_2,y_2),\dots,\lambda_r=(x_r,y_r)$, are naturally inherited from the resolution $s+1$ parts they respectively cover. We now define the rank $s+1$ subdivision parts $H_{s+1}^{1}(k)=\Gamma_{k+1}$ for $k=1,\dots,r-1$. Each part $H_{s+1}^{1}(k)$ encompasses the only index $k+1$, so they have \emph{fineness} $1$. We have $r-1$ of them and they cover one rank $s+1$ part each. In total, they cover $r-1$ rank $s+1$ parts.

Looking at what we have covered so far, we have left to be covered the portion of $\Gamma$ inside $r(r^s-1)-(r-1)$ rank $s+1$ parts. We increase the resolution once more, looking at this portion of $\Gamma$ at resolution $s+2$. The parts now have diameter at most $c^{s+1}\rho$, so we can apply Lemma \ref{lemma:side:general} again in order to ensure that any square of side $\frac{\tau}{((r+1)N)^\alpha}>\cdots > \frac{\tau}{(r^2N)\alpha}=c^{s+2}\rho$ can cover one of these parts. In total, $r^2-(r+1)+1=r^2-r=r(r-1)$ parts of resolution $s+2$ can be covered. Let us cover the first $r(r-1)$ of the parts in this portion of $\Gamma$ by tagged squares $\Gamma_{r+1}, \dots, \Gamma_{r^2}$ where, for $j=r+1,\dots, r^2$,
\[\Gamma_j=\bigg[x_j, x_j+\frac{\tau}{(jN)^\alpha}\bigg]\times\bigg[y_j, y_j+\frac{\tau}{(jN)^\alpha}\bigg]\supset [x_j,x_j+c^{s+2}\rho]\times [y_j,y_j+c^{s+2}\rho].\]
Once again, their tags, noted $\lambda_{r+1}=(x_{r+1},y_{r+1}),\dots,\lambda_r=(x_{r^2},y_{r^2})$, are naturally inherited from the resolution $s+2$ parts they respectively cover. We now define the subdivision parts of rank $s+2$ and fineness $1$ as $H_{s+2}^1(k)=\Gamma_{k+r}$, for $k=1,\dots, r(r-1)$. Each of these subdivision parts encompasses 1 index, what explain their fineness being 1. Since there are $r(r-1)$ of these, we can make $r-1$ groups of $r$ rank $s+2$ subdivision parts each. Let us name these groups in order. The first $r$ rank $s+2$ subdivision parts $H_{s+2}^1(1),\dots,H_{s+2}^1(r)$ will be grouped in one rank $s+1$ subdivision part $H_{s+1}^r(1)$, which now have fineness $r$ because it encompasses $r$ indexes inside, that is, the set \[H_{s+1}^r(1):=\bigcup_{k=1}^r H_{s+2}^1(k)\]
contains the $r$ covering parts $\Gamma_{r+1}, \dots, \Gamma_{2r}$. Notice that these $r$ covering parts cover the portion of $\Gamma$ inside one resolution $s+1$ part. Analogously, we define 
\[H_{s+1}^r(n)=\bigcup_{k=1}^r H_{s+2}^1(k+(n-1)r), \quad n=2, \dots, r-1,\]
and we notice that each one of the subdivision parts $H_{s+1}^r(2),\dots,H_{s+1}^r(r-1)$ of rank $s+1$ and fineness $r$ cover the portion of $\Gamma$ inside one rank $s+1$ part. There are $r-1$ of them, thus, altogether they cover $r-1$ rank $s+1$ parts.

\begin{figure}
    \centering
    \includegraphics[width=0.95\textwidth]{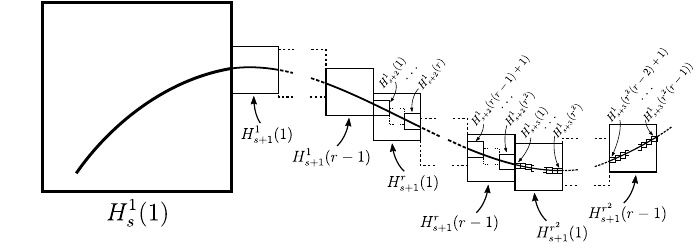}
    \caption{Covering progress up to the last rank $s+1$ subdivision part of fineness $r^2$.}
    \label{h-s-groups}
\end{figure}

Looking at what we have covered until now, we have left to be covered the portion of $\Gamma$ inside $r(r^s-1)-(r-1)-(r-1)=r(r^s-1)-2(r-1)$ rank $s+1$ parts. We increase the resolution once more, that is, we look at this portion of $\Gamma$ at resolution $s+3$. The parts now have diameter at most $c^{s+3}\rho$, so we can apply Lemma \ref{lemma:side:general} in order to ensure that any square of side $\frac{\tau}{((r^2+1)N)^\alpha}>\cdots > \frac{\tau}{(r^3N)^\alpha}=c^{s+3}\rho$ can cover one of these rank $s+3$ parts. In total, $r^3-(r^2+1)-1=r^2(r-1)$ parts of resolution $s+3$ can be covered. Let us cover the first $r^2(r-1)$ rank $s+3$ parts of this portion of $\Gamma$ by tagged squares $\Gamma_{r^2+1}, \dots, \Gamma_{r^3}$, where, for $j=r^2+1,\dots,r^3$, \[\Gamma_{j}=\bigg[x_j, x_j+\frac{\tau}{(jN)^\alpha}\bigg]\times\bigg[y_j, y_j+\frac{\tau}{(jN)^\alpha}\bigg]\supset [x_j,x_j+c^{s+3}\rho]\times [y_j,y_j+c^{s+3}\rho]\]
and their tags where inherited form the resolution $s+3$ parts they respectively cover. As before, the rank $s+3$ subdivision parts with fineness 1 are defined by $H_{s+3}^1(k)=\Gamma_{k+r^2}$, for $k=1,\dots, r^2(r-1)$. With them we can make $r(r-1)$ groups each one containing $r$ parts, these groups are noted $H_{s+2}^r(1), \dots, H_{s+2}^r(r(r-1))$ and defined as \[H_{s+2}^r(n)=\bigcup_{k=1}^{r} H_{s+3}^1(k+(n-1)r), \quad n=1,\dots,r(r-1).\]
Each $H_{s+2}^r(n)$ has rank $s+2$, has fineness $r$ and cover one resolution $s+2$ part. Since there are $r(r-1)$ of these, we can make $r-1$ groups with $r$ parts each. These groups are noted $H_{s+1}^{r^2}(1),\dots,H_{s+1}^{r^2}(r-1)$ and defined by \[H_{s+1}^{r^2}(n)=\bigcup_{k=1}^{r} H_{s+2}^r(k+(n-1)r), \quad n=1,\dots,r-1.\]
Each $H_{s+1}^{r^2}(n)$ has rank $s+1$, fineness $r^2$ and cover one resolution $s+1$ part. Since there are $r-1$ of these, we can cover $r-1$ rank $s+1$ parts in total. See Figure \ref{h-s-groups} for a representation of the covering progress up to this point.

Looking at we have covered so far, we have left to be covered the portion of $\Gamma$ inside $r(r^s-1)-2(r-1)-(r-1)=r(r^s-1)-3(r-1)$ rank $s+1$ parts. If we continue this process until we define subdivision parts with rank up to $s+t$, we will have left to be covered $r(r^s-1)-t(r-1)$ rank $s+1$ parts. Hence, we just need to go until $t=\frac{r(r^s-1)}{r-1}$ (notice that $r^s-1$ is always divisible by $r-1$) in order to have 0 rank $s+1$ parts left to be covered, that is, $(\lambda_k,\Gamma_k)_{k=1,\dots, r^t}$ is a covering of the whole set $\Gamma$.

{\bf Remark.} \emph{Notice that the construction of this covering is possible for any set with homogeneous box dimension at most $\gamma$, not necessarily ordered. Thus, up to this point in our construction, the covering represents an interesting fact by itself, and could be made in a simpler way if one doesn't need ordering. However, for common hypercyclicity, ordering is very important, for it allows us to verify property \eqref{cerise}, as we shall see below.}

Before continuing, let us briefly talk about some important properties of the sets $H_{s+j}^{r^p}(n)$, which are rank $s+j$ subdivision parts of fineness $r^p$, $n=1,\dots,r-1$, $j,p=0,\dots,t$. The fineness of these sets refer to the number of members from $(\lambda_k,\Gamma_k)_{k=1,\dots, r^t}$ that they contain. The first good feature of  $H_{s+j}^{r^p}(n)$ is that the $r^p$ sets from $(\Gamma_k)_{k=1,\dots, r^t}$ that it contains are parts from $\{(\Lambda)_{{\bf i}\in I_r^m}\}_{m\in\NN}$ of same resolution. This comes from the construction. Another feature is that its fineness can be used to estimate, with good precision, the number of indexes counted from some $\Gamma_j$ to some $\Gamma_l$ by looking at the subdivision parts they belong to. To this end, Lemma \ref{dSlemma:general} will be very useful. Fineness also allows us to get a good bound for $l$ whenever we know to which rank $s+1$ subdivision part it belongs to. In fact, it is easy to check that, if $\lambda_l$ belongs to a rank $s+1$ subdivision of fineness $r^k$, then $l\leq r^{k+1}$. Indeed, we have that
\begin{itemize}
    \item $H_s^1(1)$ has 1 index $\longrightarrow$ total $=1$;
    \item $H_{s+1}^1(n)$ has 1 index for each $n=1,\dots, r-1\longrightarrow$ total $=1+(r-1)$;
    \item $H_{s+1}^r(n)$ has r indexes for each $n=1,\dots, r-1 \longrightarrow$ total $=1+(r-1)+r(r-1)$;
    \item $\vdots$
    \item $H_{s+1}^{r^k}(n)$ has $r^k$ indexes for each $n=1,\dots, r-1\longrightarrow$ total $=1+(r-1)+r(r-1)+\cdots + r^k(r-1)=1+(r-1)(\frac{r^{k+1}-1}{r-1})=r^{k+1}$. \hspace*{1.1cm}
\end{itemize}
Thus, if $\lambda_l\in H_{s+1}^{r^k}(n)$ for some $n=1,\dots,r-1$, its enumeration cannot go past the last index inside $H_{s+1}^{r^k}(r-1)$, that is, \[l\leq r^{k+1}.\] If we want to be even more precise, supposing that $\lambda_l\in H_{s+1}^{r^{k_l}}(r_l)$, one can check that \[r_lr^{k_l}\leq l \leq (r_l+1)r^{k_l},\] although we will not need this much precision. Another practical aspect of these subdivision parts with rank and fineness is that, if we know that an index $j$ belongs to some subdivision that appears \emph{before} $H_{s+t_0}^{r^{k_0}}(n_0)$ and that $l$ belongs to another subdivision that appears \emph{after} $H_{s+t_0}^{r^{k_0}}(n_0)$, then all the indexes inside $H_{s+t_0}^{r^{k_0}}(n_0)$ must have been counted in the enumeration from $\Gamma_j$ to $\Gamma_l$, we then immediately get the estimate $l-j\geq r^{k_0}$.

The next step in our construction consists of proving that the tagged covering $\big(\lambda_k,\Gamma_k\big)_{k=1,\dots,r^t}$ satisfies (\ref{cerise}). So let $1\leq j<l\leq r^t$ and $\lambda\in\Gamma_j, \mu\in\Gamma_l$. Let $r_j, r_l, k_j, k_l$ be such that $\lambda\in H_{s+1}^{r^{k_j}}(r_j)$ and $\mu\in H_{s+1}^{r^{k_l}}(r_l)$. The proof can be broken into two situations.
\begin{enumerate}[(a)]
    \item $\|\lambda_l-\lambda_j\|<c^{s+1}\rho$,
    \item $\|\lambda_l-\lambda_j\| \geq c^{s+1}\rho$.
\end{enumerate}
Situation (b) is the easiest, so let us begin by that one. Suppose that $\|\lambda_l-\lambda_j\| \geq c^{s+1}\rho$. Since this distance is greater than any rank $s+1$ subdivision and $l$ belongs to a subdivision with fineness $k_l$, we conclude that the enumeration from $j$ to $l$ counts at least one rank $s+2$ subdivision with fineness at least $r^{k_l-2}$ (which can happens in the case $r_l=1$, see Figure \ref{fully-counted-l-1} for an example).
\begin{figure}[H]
    \centering
    \includegraphics[width=0.50\textwidth]{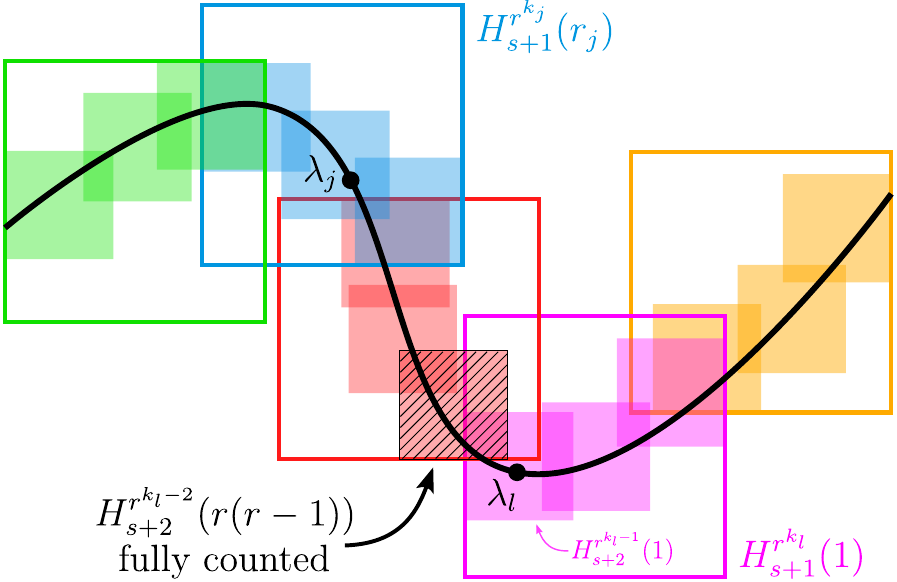}
    \caption{Portion of $\Gamma$. One rank $s+2$ subdivision with fineness at least $k_l-2$ is fully counted.}
    \label{fully-counted-l-1}
\end{figure}
We use the bound $l\leq r^{k_l+1}$ in order to get \[l-j\geq r^{k_l-2}\implies \frac{l-j}{l}\geq\frac{r^{k_l-2}}{r^{k_l+1}}\geq \frac{1}{r^3}.\]
So in this case we easily get \[\|\lambda-\mu\|\leq \rho=c^{-3}\rho c^3=c^{-3}\rho\bigg(\frac{1}{r^3}\bigg)^\alpha\leq \frac{\rho}{c^3}\bigg(\frac{l-j}{l}\bigg)^\alpha\leq D\bigg(\frac{l-j}{l}\bigg)^{1/\gamma}.\]

We now consider situation (a), that is, we suppose $\|\lambda_l-\lambda_j\|<c^{s+1}\rho$. We have two cases to consider.\vspace{3mm}

\noindent\textbf{1st case.} The rank $s+1$ subdivision parts to which $\lambda_l$ and $\lambda_j$ belong are at least one full rank $s+1$ subdivision apart. In other words: either $k_l=k_j+1$ and $r_j=r_l$; or $k_l>k_j+1$ (see Figure \ref{fullycounted-l.pdf} for an example).\vspace{3mm}

\begin{figure}[H]
    \centering
    \includegraphics[width=0.55\textwidth]{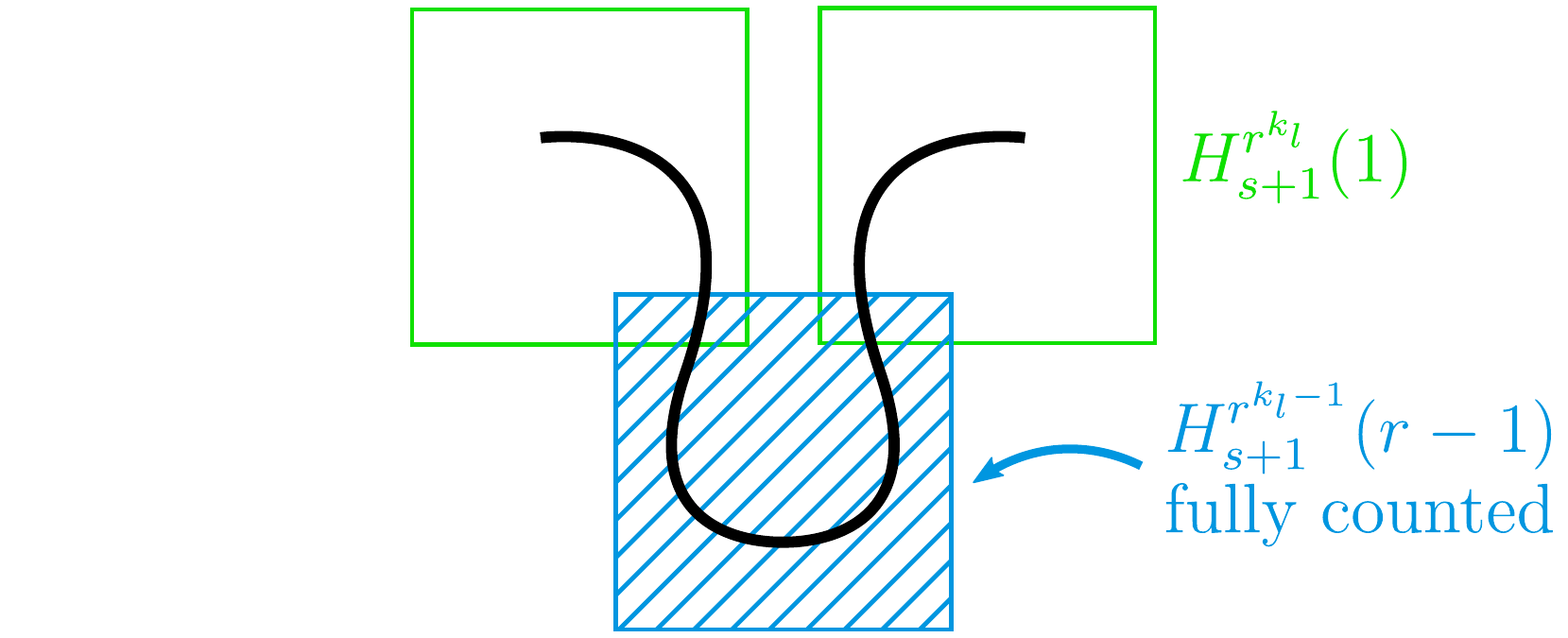}
    \caption{Portion of $\Gamma$. One full rank $s+1$ subdivision with fineness at least $r^{k_l-1}$ is fully counted.}
    \label{fullycounted-l.pdf}
\end{figure}

Then, similarly to case (b), the enumeration from $j$ to $l$ counts a full rank $s+1$ subdivision with fineness at least $k_l-1$, what implies
\[
    l-j\geq r^{k_l-1}\implies\frac{l-j}{l}\geq\frac{r^{k_l-1}}{r^{k_l+1}}\geq \frac{1}{r^2},
\]
hence,
\[\|\lambda-\mu\|\leq \rho=c^{-2}\rho c^2=c^{-2}\rho\bigg(\frac{1}{r^2}\bigg)^\alpha
\leq 
\frac{\rho}{c^3}\bigg(\frac{l-j}{l}\bigg)^\alpha\leq D\bigg(\frac{l-j}{l}\bigg)^{1/\gamma}.\]

\noindent\textbf{2nd case.} The rank $s+1$ subdivision parts to which $\lambda_l$ and $\lambda_j$ belong are either the same or are neighbors in the enumeration. In other words: either $k_l=k_j$ and $r_j=r_l$; or $k_l=k_j$ and $r_l=r_j+1$; or $k_l=k_j+1$ and $r_l=1, r_j=r-1$.
\vspace{3mm}

For the sake of easy counting, we can look at $H_{s+1}^{k_l}(\gamma_l)$ as having the coarser fineness $k_j$ (see Figure \ref{simplification} for an example of simplification). 
\begin{figure}[H]
    \centering
    \includegraphics[width=0.95\textwidth]{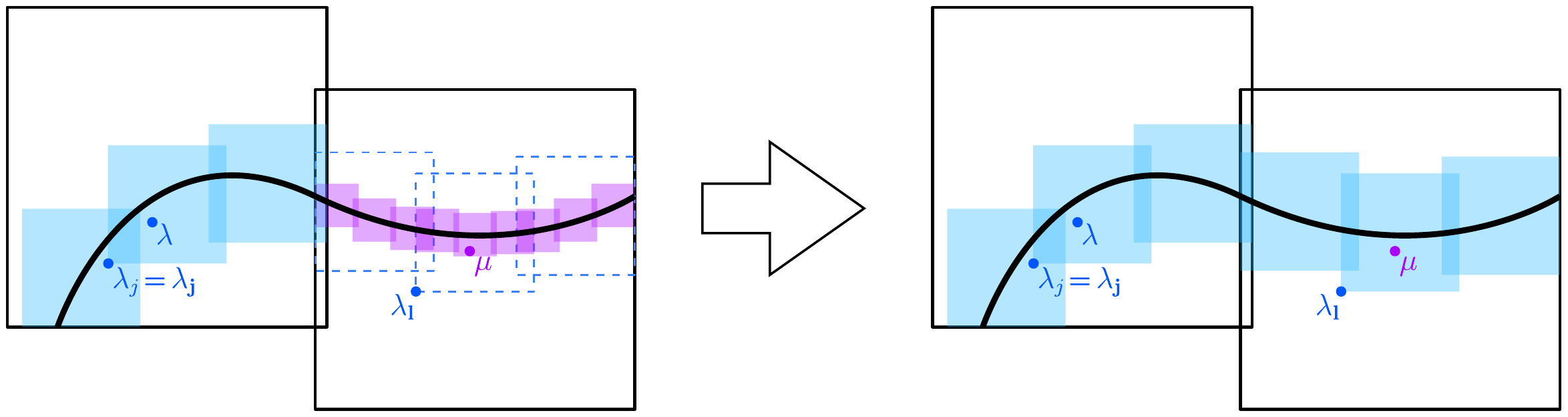}
    \caption{Simplification of a resolution transition. We reduce both to the slowest resolution for easy counting. Some indexes are lost.}
    \label{simplification}
\end{figure}
This is only needed if $k_l=k_j+1$, in which case we lose some indexes, but nothing that will ruin our proof. If we denote by ${\bf j}$ the resolution $k:=s+1+k_j$ tag that coincides with $\lambda_j=\lambda_{\bf j}$ and by ${\bf l}$ the resolution $k$ tag which corresponds to the resolution $k$ part $\Lambda_{\bf l}$ containing $\mu$, we can identify $\lambda_l$ to $\lambda_{\bf l}$ and notice that $l-j\geq {\bf j}\to{\bf l}$. Let $n$ be such that \[\frac{c^k\rho}{c^n}\leq \|\lambda_{\bf l}-\lambda_{\bf j}\|\leq \frac{c^k\rho}{c^{n+1}}.\]
We apply Lemma \ref{dSlemma:general} and get 
\begin{align*}
l-j\geq {\bf j}\to {\bf l}\geq \frac{r^{n-1}+r-2}{r-1}
&\implies \frac{l-j}{l}\geq \frac{\frac{r^{n-1}+r-2}{r-1}}{r^{k_l+1}}\geq \frac{1}{r-1}\times\frac{r^{n-1}}{r^{k_j+1+1}}\\
&\implies \bigg(\frac{l-j}{l}\bigg)^{\alpha}\geq \frac{1}{(r-1)^\alpha}c^{k_j-n+3}.
\end{align*}
We then combine this lower bound with the estimate
\begin{align*}
    \|\lambda-\mu\|
        &\leq\|\lambda-\lambda_{\bf j}\|+\|\lambda_{\bf l}-\lambda_{\bf j}\|+\|\lambda_{\bf l}-\mu\| \\
        &\leq c^{k_j+s+1}\rho+\frac{c^{k}\rho}{c^{n+1}}+c^{k_l+s+1}\rho\\
        &\leq 3\rho c^{k-n-1}
\end{align*}
in order to obtain
\[
\frac{\big(\frac{l-j}{l}\big)^\alpha}{\|\lambda-\mu\|}
\geq \frac{\frac{1}{(r-1)^\alpha}c^{k_j-n+3}}{3\rho c^{k-n-1}}=\frac{1}{3\rho(r-1)^\alpha}\times\frac{c^{k_j-n+3}}{c^{s+1+k_j-n-1}}= \frac{c^3}{3\rho(r-1)^\alpha c^{s}}.
\]
Remember that we have assumed that $3(r-1)^\alpha c^s\leq 1$ (which is true if $s$ is big enough). Hence,
\[\|\lambda-\mu\|\leq \frac{\rho}{c^3} 3(r-1)^\alpha c^{s}\bigg(\frac{l-j}{l}\bigg)^\alpha\leq \frac{\rho}{c^3}\bigg(\frac{l-j}{l}\bigg)^\alpha.\]
This completes the proof.
\section{Open problems and perspectives} \label{sec:prob}

Through the combination of the concept of HBD from \cite{BCQCommon} and the idea of using space-filling curves for ordering coverings, we have achieved optimal applications in a large class of self-similar fractals. However, numerous questions remain open. We will state some of them here, although many others can be made by looking at our construction from different angles. Specialists on fractal geometry, in special, are more likely to expand some of our constructions to fit larger classes of fractals.

The main idea that have allowed us to improve previous results and attain optimally is the use of space-filling curves to avoid jumps in the enumeration. Hence, the obvious question that can be asked is: what can we do when jumps cannot be avoided? This is the case of totally disconnected (dust-like) fractals, like the Cantor dust for example. A ``break of nature'' happens with these fractals: we go from having no jumps to having jumps on every single step. The simplest version of this problem can be stated as follows.
\begin{problem}
    Let $\C\subset \RR_+^2$ be a Cantor dust with uniform dissection ratio $1/4$. Does the family of products of Rolewicz operators $\big(e^\lambda B\times e^\mu B\big)_{(\lambda,\mu)\in\C}$ acting on $c_0\times c_0$ have a common hypercyclic vector?
\end{problem}
Notice that \cite[Example 4.9]{BCQCommon} in combination with Borichev's example give the answer for all Cantor dusts with uniform dissection ratio $c\neq \frac{1}{4}$. The limit case $c=\frac{1}{4}$ remains open.

When it comes to self-similar connected fractals, one can notice that the mere existence of a homogeneously ordered family of partitions provides a family of curves converging pointwise to the fractal itself. In fact, if $\big((\Lambda_{{\bf i}})_{{\bf i}\in I_r^m}\big)_{m\in \NN}$ is a homogeneously ordered family of coverings for $\Lambda$, then for each $m\in\NN$, we fix one point $p_{\bf i}$ inside each part $\Lambda_{\bf i}$, ${\bf i}\in I_r^m$, and we link them all following the lexicographical ordering of $I_r^{m}$. This defines a polygonal path $P_m=[p_{1,...,1}, ..., p_{r,...,r}]$, for all $m\in\NN$. Then $\Lambda=\lim_{m\to+\infty}P_m$ where the convergence is pointwise. However, although homogeneity and ordering of the family might imply the continuity of the limit curve, calculating its Hölder exponent is a more delicate task. It is, then, much more natural to apply our result by taking into account the number of similarities and their contraction ratio rather than looking at it as a Hölder curve.

Particularly for self-similar fractals that are not uniformly contracting, we can still give applications. In fact, if the fractal is homogeneously ordered, then we can consider the biggest contraction ratio everywhere in the coverings and, in this sense, treat the fractal as being uniformly contracting. More precisely, if $\Lambda$ is a homogeneously ordered fractal with $r$ similarities and contraction ratios $c_1, \cdots c_r$, then it has HBD$^\circ$ at most
\begin{equation} \label{eq:max:sim}
    \gamma = \max\bigg\{-\frac{\log r}{\log c_i}: i=1,\dots,r\bigg\}.
\end{equation}
The result is not optimal of course. The following question remains open.
\begin{problem}\label{prob:general-dim}
    Can we refine the homogeneously ordered box dimension and adapt our construction in order to include non-uniformly contracting self-similar sets? Can this be done optimally (that is, by using their similarity dimension)?
\end{problem}

Although the ``get around'' method \eqref{eq:max:sim} for calculating the HBD$^\circ$ of non-uniformly contracting self-similar can provide many more examples, sometimes it has to be used carefully, otherwise the non-optimal result is far from being interesting. Take for example the generalized Von Koch curve (defined in \cite[Page 64]{mandelbrot} and further studied in \cite{dekking}), which has 4 similarities with respective contraction ratios $1/4$, $1/2$, $1/2$ and $1/4$. Although its similarity dimension is $\frac{\log(1+\sqrt{3})}{\log(2)}<1.5$, a direct application of \eqref{eq:max:sim} implies that the generalized Von Koch curve has HBD$^\circ$ at most $2$ (which is the worse estimate one could obtain).
\begin{figure}
    \centering
    \includegraphics[width=0.95\textwidth]{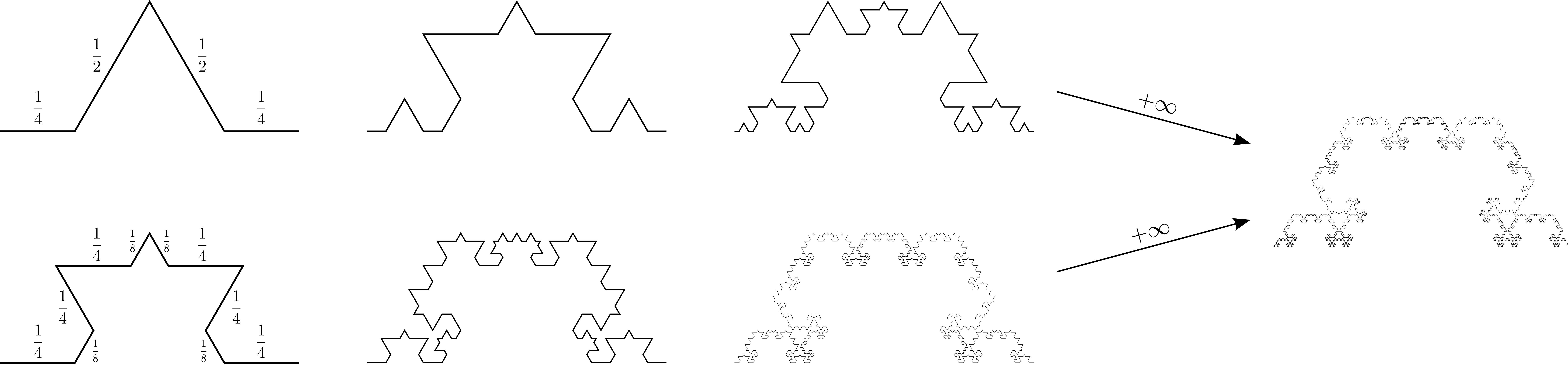}
    \caption{Classical and alternative construction of the generalized Von Koch curve}
    \label{fig:koch-non-uni}
\end{figure}
Following a suggestion by A. Peris, the approach \eqref{eq:max:sim} can be better applied if we increase the number of similarities of the non-uniformly contracting fractal. In the case of the generalized Von Koch curve, for example, we can change the seed by increasing the resolution in the second and third similarities, what makes the distribution of its contractions more uniform. By doing this, the final curve obtained through the iterative process is the same, but it is now interpreted as having 10 similarities with respective contraction ratios $1/4$, $1/8$, $1/4$, $1/4$, $1/8$, $1/8$, $1/4$, $1/4$, $1/8$, $1/4$, as represented in Figure \ref{fig:koch-non-uni}. Applying \eqref{eq:max:sim} in this alternative form, we get that the generalized Von Koch curve has HBD$^\circ$ at most $\gamma=\frac{\log 10}{\log 4}\approx 1.66$. By increasing even more the number of similarities, it is possible to prove that the generalized Von Koch curve has HBD$^\circ$ at most $\gamma$ for any gamma $\gamma>\frac{\log(1+\sqrt{3})}{\log(2)}$. It is not clear if we can get an optimal result in this example. The trick of increasing the number of similarities can be more generally applied to any self-similar fractal. When the fractal is uniformly contracting, Theorem \ref{cs-gamma-dim} provides an optimal application. If it is non-uniformly contracting, then Theorem \ref{cs-gamma-dim} applies almost optimally, that is, we can apply \eqref{eq:max:sim} and obtain an application for all $\gamma$ strictly bigger than the similarity dimension of the fractal. This problem comes from the fact that, in the statement of our result, we do not use the HBD$^\circ$ itself, but rather the fact that the set of parameters has HBD$^\circ$ \emph{at most} some positive value. The optimal application is obtained when the fractal has HBD$^\circ$ at most its similarity dimension, which is the case when it is self-similar and uniformly contracting.
\begin{problem}\label{prob:hbd}
    Instead of assuming that the parameters has HBD$^\circ$ at most some value, is it possible to use the HBD$^\circ$ itself for constructing a covering as in Lemma \ref{key:lemma}?
\end{problem}

We can look at the same problem from the optimal parametrization point of view. In \cite[Theorem 7.1]{hata}, the author shows that, in some cases, the invariant set $\Lambda$ of an IFS with non-uniform contraction ratio can be parametrized by an $\beta$-Hölder function, where $\beta=1/\gamma$ and $\Lambda$ happens to have homogeneous box dimension at most $\gamma$. This should be the case, for instance, of the generalized Von Koch curve, but the Hölder exponent is obtained though the same maximum as in \eqref{eq:max:sim}. Thus, the optimality problem for parametrizations is the same as for the HBD$^\circ$ approach. We can ask for example the following.
\begin{problem}
    Is it possible to optimally parametrize the generalized Von Koch curve?
\end{problem}
If the answer is positive, than our results Theorem \ref{cs-gamma-dim} and Corollary \ref{corol:intro} apply optimally. If not, then this would further justify the search for a positive answer to Problems \ref{prob:general-dim} or \ref{prob:hbd}, for the optimal parametrization would fail in this case. There are known cases where this happens. Consider, for example, the graph of the Takagi function (see \cite{takagi} for more details). It is a curve that has Hausdorff dimension 1, that is $\beta$-Hölder for all $\beta\in (0,1)$ but is not (even locally) Lipschitz. Hence, it is impossible to reach optimality through a parametrization approach. At the same time, it seems unlikely that this curve has HBD$^\circ$ at most 1 (although it might have HBD$^\circ$ at most $\gamma$ for all $\gamma>1$). Still, it is not impossible that a better covering approach would lead to a positive result to the following question.
\begin{problem}\label{prob:takagi}
    Let $\Gamma\subset \RR^2_+$ be the graph of a Takagi function. Is it true that $\bigcap_{(x,y)\in \Gamma}HC(e^xB\times e^y B)$ has a common hypercyclic vector on $c_0\times c_0$?
\end{problem}

The following problem is a fundamental question on common hypercyclicity in higher dimensions. It has motivated the study of the family of weights that we have considered in this paper.
\begin{problem}\label{prob:general}
    Let $I\subset \RR_+$ be a compact interval and consider the family of weights $\big(w(x)\big)_{x\in I}$ such that $w_1(x)\cdots w_n(x)\approx \exp(xF(n))$, $x\in I$, $n\in\NN$, for some function $F:\NN\to\RR_+$. Does $\sum \frac{1}{F(n)^2}=+\infty$ imply that the family $\big(B_{w(x)}\times B_{w(y)}\big)_{(x,y)\in I^2}$ has a common hypercyclic vector on $c_0\times c_0$? What conditions on $F$ are needed?
\end{problem}
In this article, we have worked with functions of the form $F(n)=C_0n^\alpha$, for some $C_0>0$ and $\alpha\in(0,1]$. Problem \ref{prob:general} states that the study of more general $F$ is a natural continuation. A more specific motivational exemple is the family $\mathcal{F}:=(T_\alpha)_{\alpha\in I}$ acting on a Fréchet sequence space $X$, where $I\subset [0,1]$ and, for every $\alpha\in I$, $T_\alpha$ is the weighted backward shift induced by $w_1(\alpha)\cdots w_n(\alpha)=\exp(n^\alpha)$. Then $\mathcal{F}$ doesn't satisfy the one dimensional CS-Criterion as it is currently stated, but it is not difficult to prove that this family is common hypercyclic by constructing a partition of $[0,1]$ by sub-intervals of size $\frac{1}{n_i\log(n_i)}$, where $n_i=i N$ for all $i,N\in\NN$. In this sense, a different version of the CS-Criterion could be stated by substituting $\frac{\tau}{n}$ for $\frac{\tau}{n\log(n)}$ in its statement. There are multiple ways of exploring these ideas in higher dimensions.

\section*{Aknowledgments}
The author would like to thank Professor Julien Cassaigne for a question that lead us to the study of Hilbert curves, Professor Stéphane Charpentier for many insightful discussions, the Mathematics Laboratory of Avignon for the financial support and the anonymous referee for their pertinent suggestions.

\bibliographystyle{elsarticle-harv}
\bibliography{bibliography}

\end{document}